\numberwithin{equation}{section}
\theoremstyle{plain}
\newtheorem{thm}{Theorem}[section]
\newtheorem{lem}[thm]{Lemma}
\newtheorem{cor}[thm]{Corollary}
\newtheorem{conj}[thm]{Conjecture}
\newtheorem{exconj}[thm]{Ex-conjecture}
\theoremstyle{definition}
\def\pod{{\rm{pod}}}
\def\qed{\hfill \rule{4pt}{7pt}}
\newcommand{\qhyp}[5]{
	{_{#1}\phi_{#2}}\bigg(\genfrac{}{}{0pt}{}{#3}{#4};#5\bigg)}
\begin{document}
	\begin{center}
		{\Large \bf {Truncated theta series from the Bailey lattice}}
		\vskip 6mm
		{Xiangyu Ding$^a$,  Lisa Hui Sun$^b$
			\\[2mm]
			Center for Combinatorics, LPMC, Nankai University, Tianjin 300071, P.R. China \\[2mm]
			$^a$dingmath@mail.nankai.edu.cn, $^b$sunhui@nankai.edu.cn \\[2mm]}
	\end{center}
	
	\allowdisplaybreaks
	
	{\noindent \bf Abstract.}  In 2012, Andrews and Merca obtained a truncated version  of Euler's pentagonal number theorem and showed the nonnegativity related to partition functions. Meanwhile, Andrews and Merca, Guo and Zeng  independently conjectured that the truncated Jacobi triple product
	series has nonnegative coefficients, which has been confirmed  analytically and also combinatorially. In 2022, Merca proposed a stronger version for this conjecture.  In this paper, by applying Agarwal, Andrews and Bressoud's identity derived from the Bailey lattice, we obtain a truncated version for the Jacobi triple product series with odd basis, which reduces to  the Andrews--Gordon identity as a special instance. As consequences, we obtain new truncated forms for Euler's pentagonal number theorem, Gauss' theta series on triangular numbers and square numbers,  which lead to inequalities for certain partition functions. Moreover, by considering  a truncated theta series involving $\ell$-regular partitions, we  confirm a conjecture  proposed by Ballantine and Merca about $6$-regular partitions and show that Merca's stronger conjecture on truncated Jacobi triple product series holds when $R=3S$ for $S\geq 1$.
	
	{\noindent \bf Keywords:}  partition functions; truncated theta series; the Bailey lattice;  Euler's pentagonal number theorem
	
	{\noindent \bf AMS Classification:} 05A17, 33D15
	
	\allowdisplaybreaks
	
	\section{Introduction}
	In the theory of integer partitions, one of the most important and beautiful identities  is Euler's pentagonal number theorem \cite[p.11]{Partitions},
	\begin{align}\label{Euler}
		(q;q)_\infty=\sum_{j=-\infty}^{\infty}(-1)^j q^{j(3 j+1) / 2},
	\end{align}
	from which  a recurrence relation for the partition function $ p(n) $ can be derived as follows \cite{Partitions}
	\begin{align}
		p(n)+\sum_{j\geq 1}(-1)^j\big(p(n-j(3 j-1) / 2)+p(n-j(3 j+1) / 2)\big)=0,
	\end{align}
	where $p(n)=0$ if $n<0$.
	
	In 2012, Andrews and Merca recaptured Shanks' truncated version \cite[(2)]{shanks1951short} of  Euler's pentagonal number theorem, that is,
	$$\sum_{j=-k}^{k}(-1)^j q^{j(3 j+1) / 2}=\sum_{n=0}^{k} \frac{(-1)^{n}(q ; q)_kq^{\binom{n}{2}+(k+1) n}}{(q ; q)_n},$$
	and obtained the following form \cite[Lemma 1.2]{andrews2012truncated}:
	\begin{align}\label{trun-euler}
		\frac{1}{(q ; q)_{\infty}} \sum_{j=-k}^{k-1}(-1)^j q^{j(3 j+1) / 2}=1+(-1)^{k-1} \sum_{n=1}^{\infty} \frac{q^{\binom{k}{2}+(k+1) n}}{(q ; q)_n}{n-1\brack k-1}.
	\end{align}
In addition, they gave  a combinatorial interpretation for the coefficients of $q^n$ on both sides of   (\ref{trun-euler}) as follows
\begin{align}\label{p(n)-com}
	(-1)^{k-1} \sum_{j=0}^{k-1}(-1)^j \big(p(n-j(3 j+1) / 2)-p(n-j(3 j+5) / 2-1)\big)=M_k(n),
\end{align}
where $M_k(n)$ \cite[Theorem 1.1]{andrews2012truncated} denotes the number of partitions of $n$ in which $k$ is the least integer that is not a part and there are more parts $>k$ than there are $<k$. It directly leads to the following inequality for the partition function $p(n)$
\begin{align}\label{MK--cor-ineq}
	(-1)^{k-1} \sum_{j=-k}^{k-1}(-1)^j\big(p(n-j(3 j+1) / 2\big) \geq 0,
\end{align}
where $n, k \geq 1$ and the strict inequality holds if $n \geq k(3 k+1) / 2$.
In 2015, Kolitsch and  Burnette \cite{kolitsch2015interpreting} reproved  \eqref{p(n)-com}   combinatorially by constructing   special partition pairs.

Later, for Gauss' theta series on triangular and square numbers  \cite[p.23]{Partitions}
\begin{align}
& \frac{\left(q^2 ; q^2\right)_{\infty}}{\left(-q ; q^2\right)_{\infty}}=\sum_{n=0}^{\infty}(-q)^{n(n+1) / 2},\label{Gauss-1} \\
& \frac{(q ; q)_{\infty}}{(-q ; q)_{\infty}}=1+2 \sum_{n=1}^{\infty}(-1)^n q^{n^2},\label{Gauss-2}
\end{align}
Guo and Zeng \cite{GuoZeng13} obtained the following truncated forms, respectively,
\begin{align}
& \frac{\left(-q ; q^2\right)_{\infty}}{\left(q^2 ; q^2\right)_{\infty}} \sum_{j=-k}^{k-1}(-1)^j q^{j(2 j+1)}\notag	\\
& \qquad \qquad = 1+(-1)^{k-1} \sum_{n=k}^{\infty} \frac{\left(-q ; q^2\right)_k\left(-q ; q^2\right)_{n-k} q^{2(k+1) n-k}}{\left(q^2 ; q^2\right)_n}{n-1\brack k-1}_{q^2},\\[8pt]
& \frac{(-q;q)_{\infty}}{(q;q)_{\infty}} \sum_{j=-k}^k(-1)^j q^{j^2}=1+(-1)^k \sum_{n=k+1}^{\infty} \frac{(-q)_k(-1)_{n-k} q^{(k+1) n}}{(q)_n}{n-1\brack k}.
\end{align}
From the above truncated forms, they also showed that    for $n, k \geq 1$, there holds
\begin{align}
&	(-1)^{k-1} \sum_{j=-k}^{k-1}(-1)^j{\pod}\big(n-j(2 j+1)\big) \geq 0,\label{pod-equality}\\
& (-1)^k \sum_{j=-k}^k(-1)^j \overline{p}\left(n-j^2\right) \geq 0,\label{poverline-equality}
\end{align}
where  $\pod(n)$ denotes the number of partitions of $ n  $ with distinct  odd parts and $ \overline{p}(n) $ denotes the number of overpartitions  of $n$ that are  partitions in which the first occurrence of each part may be overlined or not, see, for example  \cite{corteel2004overpartitions,hirschhorn2010arithmetic} for details.

Since then, the truncated forms for theta series, especially for the three classical ones \eqref{Euler}, \eqref{Gauss-1} and \eqref{Gauss-2} have attracted more and more attentions. In 2018, by applying   the Rogers--Fine identity \cite[p.15]{Rogers1917}, Andrews and Merca  \cite{andrews2018truncated} obtained  the truncated forms for  Gauss' theta series \eqref{Gauss-1} and \eqref{Gauss-2} as follows
\begin{align}
& \frac{\left(-q ; q^2\right)_{\infty}}{\left(q^2 ; q^2\right)_{\infty}} \sum_{j=-k}^{k-1}(-1)^j q^{j(2j+1)}\nonumber\\
& \qquad\qquad=1+(-1)^{k-1} \frac{\left(-q ; q^2\right)_k}{\left(q^2 ; q^2\right)_{k-1}} \sum_{j=0}^{\infty} \frac{q^{k(2 j+2 k+1)}\left(-q^{2 k+2 j+3} ; q^2\right)_{\infty}}{\left(q^{2 k+2 j+2} ; q^2\right)_{\infty}}, \label{Andew-18-2}\\[8pt]
&\frac{(-q ; q)_{\infty}}{(q ; q)_{\infty}}\sum_{j=-k}^k(-1)^j q^{j^2}=1+2(-1)^k \frac{(-q ; q)_k}{(q ; q)_k} \sum_{j=0}^{\infty} \frac{q^{(k+1)(k+j+1)}\left(-q^{k+j+2} ; q\right)_{\infty}}{\left(1-q^{k+j+1}\right)\left(q^{k+j+2} ; q\right)_{\infty}}.\label{andrew18-1}	
\end{align}
In 2022, Xia, Yee and Zhao \cite{xia2022new} established more truncated forms for the  classical theta series, such as
\begin{align*}
&\frac{(-q ; q)_{\infty}}{(q ; q)_{\infty}} \sum_{n=0}^{k-1}\left(1-q^{n+1}\right)^2(-1)^n q^{n^2+n} \\[8pt]
&\qquad  =1+(-1)^{k-1} \frac{(-q ; q)_k}{(q ; q)_{k-1}} \sum_{j=0}^{\infty} \frac{q^{k(k+1+j)}\left(-q^{k+2+j} ; q\right)_{\infty}}{\left(q^{k+2+j} ; q\right)_{\infty}}.
\end{align*}
Moreover, Wang and Yee \cite{wang2020truncated} derive many truncated  Hecke--Rogers type  identities.

Noting that the three classical theta series (\ref{Euler}), (\ref{Gauss-1}) and (\ref{Gauss-2}) all fall into the framework  of the Jacobi  triple product series, Andrews and Merca   \cite{andrews2012truncated}, Guo and Zeng  \cite{GuoZeng13}  proposed the following conjecture for truncated Jacobi triple product series,  independently.

\begin{exconj} \label{A-M-conj}
For positive integers $\ell, R, S$ with  $1 \leq S<R / 2$, the coefficient of $q^n$ with $n \geq 1$ in
\begin{align}\label{conj-weak}
(-1)^{\ell-1} \frac{\sum_{j=0}^{\ell-1}(-1)^j q^{R j(j+1) / 2-S j}\left(1-q^{(2 j+1) S}\right)}{\left(q^S, q^{R-S}, q^R ; q^R\right)_{\infty}}
\end{align}
is nonnegative.
\end{exconj}
In 2015, the above conjecture was proved by Mao \cite{mao2015proofs} analytically  and by Yee \cite{yee2015truncated} combinatorially.  He, Ji and Zang \cite{he2016bilateral} considered some truncated series by applying  synchronized $F$-partitions.
Wang and Yee \cite{wang2019truncated} reproved this conjecture by using  Liu's expansion formula for $q$-series \cite{Liuexpan}.

In this paper, we aim to find the truncated theta series for Andrews--Gordon type identities and study the nonnegative properties for the coefficients. Recall that the well-known Andrews--Gordon identity is given by
\begin{align}\label{AGB}
\sum_{n_1, n_2, \ldots, n_{k-1} \geq 0} \frac{q^{N_1^2+N_2^2+\cdots+N_{k-1}^2+N_{i}+\cdots+N_{k-1}}}{(q)_{n_1}(q)_{n_2} \cdots(q)_{n_{k-1}}}=\prod_{\substack{n=1 \\ n \neq 0, \pm i(\bmod 2 k+1)}}^{\infty}\left(1-q^n\right)^{-1},
\end{align}
where $1 \leq i \leq k $, $N_j=n_j+n_{j+1}+\cdots+n_{k-1}$. This identity arises from the first and the second Rogers--Ramanujan identities and has been generalized widely in combinatorics \cite{andrews1975problems, Partitions, bressoud1980analytic, gordon1961combinatorial}, physics \cite{adamovic2009n,adamovic2008n,Berkovich1998,Warnaar1996}, algebra \cite{AFSHARIJOO2023108946,Feigin1993}, representation theory \cite{andrews1999,LEPOWSKY197815} and so on.
From Jennings-Shaffer and Milas' work \cite[Proposition 3.1]{jennings2020q}, one can see that the characters of irreducible modules of the $N = 1$ super-singlet vertex algebra \cite{adamovic2009n,adamovic2008n} can be expressed in terms of the following Andrews--Gordon type identity
\begin{align}\label{J-M-prop}
& \sum_{n_1, n_2, \ldots, n_k \geq 0} \frac{\big(-q^{\frac{1}{2}}\big)_{N_1+\ell}q^{\frac{N_1\left(N_1+1\right)}{2}+\sum_{j=2}^{k}(N_{j}^2+(2\ell+1)N_j)+(\ell+\frac{1}{2})N_{1}
	-\sum_{j=1}^{k-i}N_j}}{(q)_{n_1}(q)_{n_2} \cdots(q)_{n_k}(q)_{n_k+2 \ell}} \notag \\[8pt]
&\qquad\quad = \frac{\big(-q^{\frac{1}{2}}\big)_{\infty}}{(q)_{\infty}}\bigg(\sum_{n \geq|\ell|}-\sum_{n \leq-|\ell|-1}\bigg) q^{\left(k+\frac{1}{2}\right)\left(n+\frac{i}{2 k+1}\right)^2-\frac{(2 k \ell+i+\ell)^2}{4 k+2}},
\end{align}
where $i, k, \ell \in \mathbb{Z}, k \geq 1$ and $0 \leq i \leq k$, $N_j=n_j+n_{j+1}+\cdots+n_{k}$. By noting that not only (\ref{AGB}) but also (\ref{J-M-prop}) can be deduced by the method of Bailey lattice,  we further apply the technique of Bailey lattice to derive the truncated theta series for such identities.

By applying Agarwal, Andrews and Bressoud's Bailey lattice,  we obtain a truncated form of Andrews--Gordon type for the Jacobi triple product series (\ref{conj-weak}) with odd basis.

\begin{thm}\label{lem-thm-1} We have
\[ \begin{aligned}
&\frac{1}{\left(q^{i+1}, q^{2k-i}, q^{2k+1} ; q^{2k+1}\right)_{\infty} }\sum_{j =-\ell} ^{\ell-1}  (-1)^{j} q^{(k+\frac{1}{2}) j^2+(k-i-\frac{1}{2} )j } \\[8pt]
&= 1+\frac{(-1)^{\ell-1}q^{(k+\frac{1}{2})( \ell^{2}+\ell) -(i+1)\ell}(q)_{\infty}}{\left(q^{i+1}, q^{2k-i}, q^{2k+1} ; q^{2k+1}\right)_{\infty}}\notag\\[8pt]
&\times\sum_{n_1, n_2, \ldots, n_k \geq 0} \frac{ (q^{2\ell+1})^{N_1+N_2+\cdots+N_k}
	q^{N_1^{2}+N_2^2+N_3^2+\cdots+N_k^2-N_1-N_2-N_3-\cdots-N_i} (1-q^{\ell})}
{(q)_{n_1}(q)_{n_2} \cdots(q)_{n_{k-1}}
	(q)_{n_{k}}(q)_{n_{k}+2\ell } (1-q^{\ell+n_{k}}) }, \notag
\end{aligned} \]
where  $ i $, $k$, $\ell$ are nonnegative integers with $0 \leq i \leq k$ and $N_j=n_j+n_{j+1}+\cdots+n_{k}$.
\end{thm}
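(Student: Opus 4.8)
The plan is to read the putative constant term on the right through the Jacobi triple product and thereby recast the whole statement as an Andrews--Gordon type \emph{tail} identity, which is exactly the kind of output the Bailey lattice produces. Writing $Q=q^{2k+1}$ and noting that $q^{(k+\frac12)j^2+(k-i-\frac12)j}=Q^{\binom{j}{2}}(q^{2k-i})^{j}$, the Jacobi triple product gives
\[
\sum_{j=-\infty}^{\infty}(-1)^{j}q^{(k+\frac12)j^2+(k-i-\frac12)j}=\left(q^{i+1},q^{2k-i},q^{2k+1};q^{2k+1}\right)_{\infty}.
\]
Hence the ``$1$'' on the right-hand side is precisely the value of the \emph{full} bilateral sum divided by the same infinite product, and the asserted identity is equivalent, after multiplying through by the product and using $\sum_{j=-\ell}^{\ell-1}=\sum_{j=-\infty}^{\infty}-\big(\sum_{j\ge \ell}+\sum_{j\le-\ell-1}\big)$, to the tail identity
\[
\Big(\sum_{j\ge \ell}+\sum_{j\le-\ell-1}\Big)(-1)^{j}q^{(k+\frac12)j^2+(k-i-\frac12)j}=(-1)^{\ell}q^{(k+\frac12)(\ell^2+\ell)-(i+1)\ell}(q)_{\infty}\,\Sigma_{k,i,\ell},
\]
where $\Sigma_{k,i,\ell}$ denotes the displayed $k$-fold sum. (The chosen prefactor is exactly the $j=\ell$ term of the tail.) This is the same shape as \eqref{J-M-prop}, where an Andrews--Gordon type multisum equals an infinite product times a bilateral theta series truncated from $|\ell|$; so I would derive it by the Bailey lattice, just as \eqref{AGB} and \eqref{J-M-prop} are themselves derived.

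Concretely, first I would fix a seed Bailey pair relative to $a=q^{2\ell}$ --- this is the object that carries the truncation and is responsible for the factors $(q)_{n_k+2\ell}$, $(1-q^{\ell+n_k})$ and $(1-q^{\ell})$ (via $(q^{2\ell+1})_{n_k}=(q)_{n_k+2\ell}/(q)_{2\ell}$), as well as for the shift that turns the eventual theta series into its tail from $|\ell|$. I would then run the Bailey chain, inserting a single Agarwal--Andrews--Bressoud change-of-base step (the move $a\mapsto a/q$) at the level dictated by $i$: the $k$ squaring steps produce $q^{N_1^2+\cdots+N_k^2}$ together with the weight $(q^{2\ell+1})^{N_1+\cdots+N_k}$ coming from the factors $a^{N_j}q^{N_j^2}$, while the lone change of base is what breaks the symmetry, inserting the linear term $-N_1-\cdots-N_i$ (the signature that lowers the base on the first $i$ indices) and supplying the denominators $(q)_{n_1}\cdots(q)_{n_{k-1}}$. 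Summing the resulting $\alpha^{(k)}$-sequence and applying the Jacobi triple product with base $q^{2k+1}$ and $z=q^{2k-i}$ then yields the tail theta series with residues $i+1,\,2k-i$ displayed above, and reversing the reduction of the first paragraph delivers the theorem.

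The hard part will be the first step: pinning down the correct seed Bailey pair relative to $a=q^{2\ell}$ and verifying that it is a genuine Bailey pair, since this is where the truncation is born and where the precise prefactor $q^{(k+\frac12)(\ell^2+\ell)-(i+1)\ell}$ and the surviving factors $(1-q^{\ell})/(1-q^{\ell+n_k})$ must be made to appear. Closely tied to it is the need to thread the single change-of-base through the chain at exactly the right level so that the modulus-$(2k+1)$ theta function emerges with residues $q^{i+1},q^{2k-i}$ and the linear pattern $-N_1-\cdots-N_i$ comes out precisely --- not shifted by one, and on the correct block of indices. The remaining work --- tracking the $q$-powers through the $k$ iterations, and justifying the termwise rearrangement when the full bilateral sum is split into its finite part and its two tails --- is routine but error-prone, and I would organize it by first recording the effect of one chain step and one lattice step on the pair $(\alpha,\beta)$ and then iterating.
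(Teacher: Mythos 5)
Your first paragraph is sound, and it is in fact the paper's own last step run in reverse: applying the Jacobi triple product \eqref{JTPI} with base $q^{2k+1}$ and argument $q^{2k-i}$, the truncated sum equals the full bilateral series minus its two tails, so the theorem is equivalent to the tail identity you write down (and your observation that the prefactor is exactly the $j=\ell$ term of the tail checks out). Your plan for proving that tail identity --- the Agarwal--Andrews--Bressoud lattice with the single change-of-base inserted at the level dictated by $i$, seeded by a Bailey pair that carries the truncation parameter $\ell$ --- is also exactly the paper's route, which uses the packaged form of this iteration, Lemma \ref{bailey-lattice}, specialized by $n\to\infty$, $\rho_s,\sigma_s\to\infty$ ($s\geq 2$) and then $\sigma_1,\rho_1\to\infty$ as in \eqref{even-pf}--\eqref{pf-12}.

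The genuine gap is that your proposal stops precisely where the proof's content begins: the seed pair is never exhibited, and you yourself flag it as the hard part. The paper does not discover an exotic pair; it takes the pair \eqref{Bailey-pair-1} coming from the $_6\phi_5$ summation \eqref{BP6phi5} (via $b\to q$, $c\to (aq)^{1/2}$), namely $\alpha_n=\frac{1-aq^{2n}}{1-a}(-1)^nq^{\binom{n}{2}}(a^{1/2}q^{-1/2})^n$, $\beta_n=\frac{(a^{1/2}q^{-1/2};q)_n}{(q,a,a^{1/2}q^{1/2};q)_n}$, inserts it into \eqref{pf-12}, and only then sets $a=q^{2\ell+1}$ --- not $a=q^{2\ell}$ as you propose. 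The distinction matters: the lattice output relation is relative to $a/q$, so the seed must sit at $q^{2\ell+1}$ for the tails to start at $|\ell|$; seeding at $q^{2\ell}$ misaligns the truncation by one. With $a=q^{2\ell+1}$ one gets $\beta_{n_k}=\frac{(q^{\ell};q)_{n_k}}{(q,q^{2\ell+1},q^{\ell+1};q)_{n_k}}=\frac{1-q^{\ell}}{(q)_{n_k}(q^{2\ell+1})_{n_k}(1-q^{\ell+n_k})}$, which is the sole source of the factors $(1-q^{\ell})$, $(1-q^{\ell+n_k})$ and, after multiplying through by $1/(q)_{2\ell}$, of $(q)_{n_k+2\ell}$; and the two $\alpha$-sums produced by the lattice become the two tails only after the index shifts $n\to n-\ell$ and $n\to -n-\ell$. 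These specializations and shifts are not routine bookkeeping to be filled in later; together with the verification that the resulting exponents match, they constitute essentially the whole proof, so as it stands the proposal is an accurate outline of the paper's argument rather than a proof.
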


It is notable that when $ \ell =0 $ in the above identity, the sum on the left hand side  vanishes and the terms in the sum on the right hand side are nonzero only when $n_k=0$, thereby it reduces to Andrews--Gordon identity (\ref{AGB}) after replacing $ i$ by $i-1  $.

From the above theorem, when $ k=1 $ and $ i=0 $, the following  truncated  Euler's pentagonal number theorem can be deduced.
\begin{thm}\label{thm-1} For a given positive integer $ \ell $, we have
\begin{align}\label{thm-1-eq}
\frac{1}{(q ; q)_{\infty}} \sum_{j=-\ell}^{\ell-1}(-1)^j q^{j(3 j+1) / 2}=1+(-1)^{\ell-1}\sum_{n=0}^{\infty} \frac{ q^{(2\ell+1)n +n^{2} +\frac{3}{2}\ell ^{2} +\frac{1}{2}\ell}(1-q^{\ell})}{(q;q)_{n} (q;q)_{n+2\ell} (1-q^{n+\ell})}.
\end{align}
\end{thm}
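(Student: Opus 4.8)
The plan is to obtain Theorem~\ref{thm-1} simply as the special case $k=1$, $i=0$ of the general truncated identity in Theorem~\ref{lem-thm-1}, so the entire argument is a careful specialization and simplification rather than a fresh derivation. First I would substitute $k=1$ and $i=0$ into the left-hand side: the exponent $(k+\tfrac12)j^2+(k-i-\tfrac12)j$ collapses to $\tfrac32 j^2+\tfrac12 j=j(3j+1)/2$, matching the pentagonal exponent, while the triple product $(q^{i+1},q^{2k-i},q^{2k+1};q^{2k+1})_\infty$ becomes $(q,q^2,q^3;q^3)_\infty$. The one elementary observation needed here is that this last product collects every positive integer exactly once according to its residue $1,2,0\pmod 3$, so it equals $(q;q)_\infty$; hence the prefactor $1/(q^{i+1},q^{2k-i},q^{2k+1};q^{2k+1})_\infty$ is exactly $1/(q;q)_\infty$, reproducing the left-hand side of \eqref{thm-1-eq}.

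Next I would simplify the prefactor on the right. With $k=1$, $i=0$ the exponent $(k+\tfrac12)(\ell^2+\ell)-(i+1)\ell$ becomes $\tfrac32(\ell^2+\ell)-\ell=\tfrac32\ell^2+\tfrac12\ell$, and the factor $(q)_\infty$ in the numerator cancels against the denominator $(q,q^2,q^3;q^3)_\infty=(q)_\infty$, leaving the clean coefficient $(-1)^{\ell-1}q^{\frac32\ell^2+\frac12\ell}$.

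Finally I would reduce the $k$-fold sum to a single sum. Since $k=1$ there is only one summation index, and the nested sums $N_j=n_j+\cdots+n_k$ give $N_1=n_1$; writing $n:=n_1=n_k$ we get $N_1^2=n^2$ for the quadratic piece and $(q^{2\ell+1})^{N_1}=q^{(2\ell+1)n}$. The subtracted linear terms $-N_1-\cdots-N_i$ are empty because $i=0$, the product $(q)_{n_1}\cdots(q)_{n_{k-1}}$ is empty for $k=1$, and the remaining denominator is $(q)_{n}(q)_{n+2\ell}(1-q^{\ell+n})$. Combining the prefactor exponent with the summand exponent yields $q^{(2\ell+1)n+n^2+\frac32\ell^2+\frac12\ell}$, producing exactly the right-hand side of \eqref{thm-1-eq}.

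Because this is a direct specialization, I do not expect any genuine analytic obstacle; the only care required is bookkeeping, namely ensuring that the empty products and the vanishing $i$-indexed sum are handled correctly and that the two exponent contributions are added rather than double-counted. The single substantive identity invoked is the trivial factorization $(q,q^2,q^3;q^3)_\infty=(q)_\infty$, which is precisely what makes both the left-hand prefactor and the right-hand cancellation work out.
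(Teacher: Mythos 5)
Your proposal is correct and is exactly the paper's route: the paper obtains Theorem~\ref{thm-1} by setting $k=1$, $i=0$ in Theorem~\ref{lem-thm-1}, noting as you do that $(q,q^2,q^3;q^3)_\infty=(q;q)_\infty$ collapses both the prefactor and the infinite-product cancellation. Your bookkeeping of the empty products, the vanishing $i$-indexed terms, and the exponent $\tfrac32(\ell^2+\ell)-\ell=\tfrac32\ell^2+\tfrac12\ell$ is accurate, so the specialization goes through with no gaps.
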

We also show that Theorem \ref{thm-1}  is equivalent to  Andrews and Merca's result (\ref{trun-euler}) .

Further applying the Bailey lattice,  we obtain the truncated forms for Gauss' theta series on triangular numbers and square numbers, respectively.
\begin{thm}\label{thm-2} For a given positive integer $ \ell $, we have
\begin{align}
\frac{\left(-q ; q^2\right)_{\infty}}{\left(q^2 ; q^2\right)_{\infty}} &\sum_{j=-\ell +1}^{\ell}(-1)^j q^{2 j^2+j}\notag \\[8pt]
\qquad&=1+(-1)^{\ell -1}  \sum_{n=0}^{\infty}\frac{ (-q;q^{2})_{n+\ell} q^{2\ell n +n^{2}+2\ell^{2} -\ell}  (1-q^{2\ell})  }{(q^{2};q^{2})_{n} (q^{2};q^{2})_{n+2\ell } (1-q^{2n+2\ell})}.
\end{align}
\end{thm}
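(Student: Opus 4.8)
The plan is to obtain Theorem~\ref{thm-2} by running the Bailey lattice of Agarwal, Andrews and Bressoud over the base $q^2$ with a seed Bailey pair tailored to the Gauss triangular theta function, exactly as Theorem~\ref{thm-1} arises from the $k=1$ instance of Theorem~\ref{lem-thm-1} over the base $q$. Before invoking the lattice I would first pin down the product side: by the Jacobi triple product one has
\[
\sum_{j=-\infty}^{\infty}(-1)^j q^{2j^2+j}=(q;q^2)_\infty(q^4;q^4)_\infty=\frac{(q^2;q^2)_\infty}{(-q;q^2)_\infty},
\]
the right member being exactly the left side of Gauss' identity \eqref{Gauss-1}. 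Consequently the prefactor $\frac{(-q;q^2)_\infty}{(q^2;q^2)_\infty}$ multiplied by the full bilateral theta series equals $1$. This both fixes the constant term on the right-hand side of the claimed identity and shows that the $\ell$-indexed sum over $n$ is, up to the explicit sign $(-1)^{\ell-1}$ and power $q^{2\ell^2-\ell}$, nothing but a Bailey-lattice re-expression of the tail $\sum_{|j|\ge\ell}$ of that theta series.

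Next I would feed into the lattice the Bailey pair $(\alpha_n,\beta_n)$ relative to a free parameter $a$ (base $q^2$) whose associated theta sum reproduces $\sum_j(-1)^j q^{2j^2+j}$ on the $\alpha$-side and whose $\beta$-sequence carries the $(-q;q^2)$ factor seen in the statement, namely $\beta_n$ reducing to $(-q;q^2)_n/(q^2;q^2)_n$ before specialization. Performing a single lattice step, which lowers the parameter by one base-$q^2$ step, introduces a correction factor of the form $\frac{1-a}{1-aq^{2n}}$ into the summand. The decisive move is then to specialize $a=q^{2\ell}$: the correction factor becomes precisely $\frac{1-q^{2\ell}}{1-q^{2n+2\ell}}$, the base-$q^2$ Pochhammer symbols combine into the $(q^2;q^2)_n$, $(q^2;q^2)_{n+2\ell}$ and $(-q;q^2)_{n+\ell}$ appearing in the statement, and — most importantly — the $\alpha$-side theta sum truncates to the finite window $\sum_{j=-\ell+1}^{\ell}$, the remaining infinite part being absorbed into the product-side $1$. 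Comparing the two sides and multiplying through by $\frac{(-q;q^2)_\infty}{(q^2;q^2)_\infty}$ should then yield the stated identity.

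The routine but nontrivial part is the exponent and sign bookkeeping: one must track the quadratic form through the single iteration and the specialization $a=q^{2\ell}$ to arrive exactly at the exponent $2\ell n+n^2+2\ell^2-\ell$ and the sign $(-1)^{\ell-1}$, and one must check that the truncation produces the asymmetric range $j\in\{-\ell+1,\dots,\ell\}$ rather than a symmetric one. This asymmetry is genuine: the exponents $2j^2+j=j(2j+1)$ are injective on $\mathbb Z$, so there is no $j\mapsto -j$ pairing available to symmetrize the window, and the precise offset is dictated by how the specialization meets the theta series. The main obstacle, and the step I would spend the most care on, is the first one: correctly selecting and verifying the seed Bailey pair over base $q^2$, i.e. confirming that the candidate $(\alpha_n,\beta_n)$ genuinely satisfies the Bailey relation $\beta_n=\sum_{r=0}^{n}\alpha_r/\big((q^2;q^2)_{n-r}(aq^2;q^2)_{n+r}\big)$ and that, after the lattice step and the specialization $a=q^{2\ell}$, its $\beta$-side collapses to the single-fold sum displayed in Theorem~\ref{thm-2}. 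Everything downstream of a correctly chosen pair is the same finite manipulation already carried out for Theorem~\ref{thm-1}.
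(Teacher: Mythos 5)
Your overall architecture---Bailey lattice, a seed Bailey pair, a specialization of $a$ to a power of $q$ that splits the bilateral theta series into a finite window plus a tail, and the Jacobi triple product to convert the tail into the truncated identity---is exactly the paper's strategy, and your opening observation that the full bilateral series times the prefactor $\frac{(-q;q^2)_\infty}{(q^2;q^2)_\infty}$ equals $1$ is correct and is indeed how the paper concludes. However, there is a genuine gap: the one ingredient that actually constitutes the proof, namely the explicit seed Bailey pair, is never exhibited, and you yourself flag its selection and verification as ``the main obstacle.'' In the paper this ingredient is the pair \eqref{Bailey-pair-1} (obtained from the ${}_6\phi_5$ pair \eqref{BP6phi5} with $b\to q$, $c\to(aq)^{1/2}$), inserted into the lattice output \eqref{pf-12} with the lattice parameter $\rho=-a^{1/2}$, followed by the specialization $a=q^{2\ell+1}$, the choices $k=1$, $i=1$, and finally the substitution $q\to q^2$.

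Moreover, the concrete details you do commit to would not survive the bookkeeping you defer. First, with your specialization $a=q^{2\ell}$ over base $q^2$, the Bailey relation forces denominators built from $(aq^2;q^2)_{n+j}=(q^{2\ell+2};q^2)_{n+j}$, i.e.\ factors of the shape $(q^2;q^2)_{n+\ell}$, not the $(q^2;q^2)_{n+2\ell}$ appearing in the statement; to produce the latter one needs $a=(q^2)^{2\ell+1}$, which is precisely the paper's $a=q^{2\ell+1}$ over base $q$ viewed after the substitution $q\to q^2$. Second, the factor $(1-q^{2\ell})/(1-q^{2n+2\ell})$ does not arise as $\frac{1-a}{1-aq^{2n}}$ from a lattice step: in the lattice (Lemma \ref{bailey-lattice}) such factors attach to the $\alpha$-side, i.e.\ to the theta-series side, whereas in Theorem \ref{thm-2} this factor sits in the summand of the $n$-sum; in the paper it comes from the ratio $(a^{1/2}q^{-1/2};q)_n/(a^{1/2}q^{1/2};q)_n$ inside $\beta_n$ of \eqref{Bailey-pair-1}. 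Third, the factor $(-q;q^2)_{n+\ell}$ does not come from the Bailey pair's $\beta$-sequence as you posit, but from the lattice parameter choice $\rho=-a^{1/2}$, through the $(\rho)_{m_1}$ factor in \eqref{pf-12}. So while your plan points in the right direction and its skeleton matches the paper's proof, the parameter choices as stated are mutually inconsistent with the target identity, and the argument is not yet a proof.
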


\begin{thm}\label{thm-3} For a given positive integer $ \ell $, we have
\begin{align}
\frac{(-q ; q)_{\infty}}{(q ; q)_{\infty}} \sum_{j=-\ell+1 }^{\ell}(-1)^j q^{j^2}= 1+(-1)^{\ell-1} \sum_{n=0}^{\infty } \frac{(-q)_{n+\ell}  q^{\ell n+\frac{n(n+1)}{2}+\ell^{2}}  (1-q^{\ell}) }{(q)_{n} (q)_{n+2\ell} (1-q^{n+\ell})}.
\end{align}
\end{thm}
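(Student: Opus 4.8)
The plan is to rerun the Bailey lattice argument that yields Theorem~\ref{lem-thm-1}, but seeded with a Bailey pair tailored to Gauss' square number series \eqref{Gauss-2} in place of the Andrews--Gordon input. The mechanism behind Theorem~\ref{lem-thm-1} is to start from a Bailey pair carrying the $2\ell$-shift (the same shift visible in the Jennings-Shaffer--Milas identity \eqref{J-M-prop}, where it appears as $(q)_{n_k+2\ell}$), iterate the Bailey chain, and apply one change-of-base lattice step so that a Jacobi-triple-product specialization produces the finite theta sum on the left and a remainder multisum on the right. The footprint of the target identity tells me which pair to use: the factor $(-q)_{n+\ell}$ together with the product side $(-q)_\infty/(q)_\infty$ is exactly what an \emph{overpartition-type} Bailey pair (one inserting $(-q)_n$) produces, rather than the Andrews--Gordon pair that gives $1/(q^{i+1},q^{2k-i},q^{2k+1};q^{2k+1})_\infty$.

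Concretely, I would take the minimal-rank (single-fold) specialization of the lattice with the $2\ell$-shift built into the base, and feed in the overpartition analogue of the initializing pair, i.e. a pair of the schematic form
\[
\alpha_n \;\propto\; (-1)^n q^{n^2}(\text{shift factors}), \qquad
\beta_n \;=\; \frac{(-q)_n}{(q)_n(q)_{n+2\ell}}(\text{normalization}),
\]
whose exact normalization I pin down by forcing the $\ell\to\infty$ limit to collapse to Gauss' identity \eqref{Gauss-2}. Pushing this pair through the single lattice step --- the very step that produces a finite theta range out of the full Jacobi triple product --- should deliver, after clearing the $aq^{2n}$-type weights, the left-hand partial theta $\sum_{j=-\ell+1}^{\ell}(-1)^jq^{j^2}$ against the claimed single sum, with the three boundary factors $(-q)_{n+\ell}$, $1-q^\ell$ and $1-q^{n+\ell}$ arising from the lattice bookkeeping exactly as $(1-q^\ell)/(1-q^{\ell+n_k})$ does in Theorem~\ref{lem-thm-1}.

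I expect the main obstacle to be the exact accounting of powers of $q$ and of the asymmetric summation range. Two features must be produced simultaneously and correctly: the range $-\ell+1\le j\le\ell$, shifted by one relative to the $-\ell\le j\le\ell-1$ range of the pentagonal theorem, and the precise exponent $\ell n+\tfrac{n(n+1)}{2}+\ell^2$ on the right. In this family of arguments these are exactly the quantities into which sign and index-shift errors propagate, since the change-of-base step multiplies each term by a weight of the form $aq^{2n}$ with $a$ specialized, and a one-unit miscount there rotates the whole theta range. I would therefore fix the normalization by matching the constant term and the coefficient of the lowest nontrivial power of $q$ on both sides before trusting the general formula.

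As independent checks, I would first verify that the $\ell\to\infty$ limit recovers \eqref{Gauss-2}, since both sides tend to $1$ after multiplying through by $(q)_\infty/(-q)_\infty$; and second, compare with Theorem~\ref{thm-2}, whose triangular-number identity is the base-$q^2$ sibling of the present one, with $(-q;q^2)_{n+\ell}$ playing the role that $(-q)_{n+\ell}$ plays here. A successful derivation in the square case should be transportable to that case by the substitution $q\mapsto q^2$ in the input Bailey pair, and agreement of the two derivations is strong evidence that the bookkeeping is correct.
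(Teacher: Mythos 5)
Your high-level strategy (one pass through the Bailey lattice, then the Jacobi triple product \eqref{JTPI}) points in the right direction, but the proposal has a genuine gap at its center: the ``overpartition-type'' Bailey pair that is supposed to seed the whole argument is never constructed, and its existence in the claimed form is precisely what would have to be proved. Pinning down a ``normalization'' afterwards by matching the constant term, the lowest nontrivial coefficient, or the $\ell\to\infty$ limit cannot substitute for verifying the defining relation \eqref{Baileyp}. In fact the schematic form you posit appears to be unattainable: taking $\beta_n=\frac{(-q)_n}{(q)_n(q)_{n+2\ell}}$ relative to the base $a=q^{2\ell}$ (the only base compatible with the shift $(q)_{n+2\ell}$ you wrote) and inverting \eqref{Baileyp} forces
\[
\alpha_1=(1-aq^{2})\Big[-\tfrac{\beta_0}{1-q}+(1-aq)\,\beta_1\Big]
=\frac{q\,(1-q^{2\ell+2})}{(1-q)\,(q)_{2\ell}},
\]
a series with nonnegative coefficients, which is incompatible with $\alpha_1\propto(-1)^{1}q^{1}\times(\text{shift factors})$ under any positive constant normalization. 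So the $\alpha_n$ determined by your $\beta_n$ does not alternate like $(-1)^nq^{n^2}$, and the argument cannot proceed as planned.

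The diagnosis that led you to this pair is also backwards, and this is exactly where your route departs from the one that works. In the paper's proof the factor $(-q)_{n+\ell}$ does \emph{not} come from building $(-q)_n$ into $\beta_n$: the proof uses the very same Bailey pair \eqref{Bailey-pair-1} as Theorem \ref{lem-thm-1} (the pair you explicitly reject), takes $i=0$ in Lemma \ref{bailey-lattice} to get \eqref{pf-thm-3}--\eqref{pf-3-1}, keeps the parameter $\rho_1$ finite, and specializes $\rho_1=-(aq)^{1/2}$ with $a=q^{2\ell+1}$ and $k=1$. This negative specialization $\rho_1=-q^{\ell+1}$ is what produces $(\rho_1)_n=(-q^{\ell+1})_n=(-q)_{n+\ell}/(-q)_\ell$, while the pair's own $\beta_n=\frac{(q^{\ell})_n}{(q)_n(q^{2\ell+1})_n(q^{\ell+1})_n}$ supplies the remaining factor $\frac{1-q^{\ell}}{1-q^{n+\ell}}$; the theta sum and the product $(-q)_\infty/(q)_\infty$ only appear at the end, from the $\alpha_n$ side via \eqref{JTPI}. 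In other words, the ``footprint'' you read as evidence for an overpartition pair is actually created by parameter specialization inside the lattice, not by the input pair. (For the same reason, Theorem \ref{thm-2} is not obtained from Theorem \ref{thm-3} by $q\mapsto q^2$ in the input pair: it uses \eqref{pf-12} with $i=1$ and $\rho=-a^{1/2}$, a different specialization.) If you genuinely want a route independent of the lattice, the transformation in Theorem \ref{equi-gauss-2} combined with the Rogers--Fine identity \eqref{R-R-id} gives one, but the plan as written does not yield a proof.
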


Recently, Merca \cite{merca2022two} proposed a  stronger version of Ex-conjecture \ref{A-M-conj} as follows.
\begin{conj}\label{stronger-ja}
For positive integers $1\leq S< R$ with $k \geq 1$, the theta series
\begin{align}\label{conj-strong}
(-1)^{k} \frac{\sum_{j=k}^{\infty}(-1)^j q^{R j(j+1) / 2}\left(q^{-Sj}-q^{( j+1) S}\right)}{\left(q^S, q^{R-S}; q^R\right)_{\infty}}
\end{align}
has nonnegative coefficients.
\end{conj}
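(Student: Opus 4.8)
The plan is to recast the series in \eqref{conj-strong} as the tail of a Jacobi triple product and then isolate exactly the positivity that is \emph{not} already supplied by the now-established weak Conjecture~\ref{A-M-conj}. By the Jacobi triple product identity,
\[
\left(q^S,q^{R-S},q^R;q^R\right)_\infty=\sum_{j=-\infty}^{\infty}(-1)^j q^{Rj(j+1)/2-Sj}=:\Theta .
\]
Since $q^{Rj(j+1)/2}\!\left(q^{-Sj}-q^{(j+1)S}\right)=q^{Rj(j+1)/2-Sj}-q^{Rj(j+1)/2+S(j+1)}$, and the substitution $j\mapsto-(j+1)$ identifies the second term with the part of $\Theta$ of index $\le-k-1$, the numerator of \eqref{conj-strong} is precisely the two-sided tail $\Theta-P_k$, where $P_k:=\sum_{j=-k}^{k-1}(-1)^j q^{Rj(j+1)/2-Sj}$ is the truncation appearing in \eqref{conj-weak}. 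Writing $E:=(q^R;q^R)_\infty$ and using $\Theta=(q^S,q^{R-S};q^R)_\infty\,E$, the expression in \eqref{conj-strong} becomes
\[
(-1)^k\frac{\Theta-P_k}{(q^S,q^{R-S};q^R)_\infty}=E\cdot g_k,\qquad g_k:=(-1)^k\!\left(1-\frac{P_k}{\Theta}\right).
\]
For $1\le S<R/2$ the series $g_k$ agrees, for $n\ge1$, with the quantity in \eqref{conj-weak} at $\ell=k$ and has constant term $0$; hence its nonnegativity \emph{is} Conjecture~\ref{A-M-conj}, proved in \cite{mao2015proofs,yee2015truncated} (the range $R/2\le S<R$ is reached through the reflection symmetry $\Theta(S)=\Theta(R-S)$). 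Thus the stronger Conjecture~\ref{stronger-ja} is equivalent to the single assertion that multiplying the sign-indefinite Euler-type factor $(q^R;q^R)_\infty$ by the \emph{nonnegative} series $g_k$ again yields nonnegative coefficients.

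This reduction pinpoints the genuinely new content: one must control the cancellation coming from the factor $(q^R;q^R)_\infty=\prod_{m\ge1}(1-q^{Rm})$. My plan to do so follows the mechanism of this paper. First I would iterate Agarwal--Andrews--Bressoud's Bailey lattice, as in Theorem~\ref{lem-thm-1}, but at the modulus dictated by the pair $(R,S)$ rather than only the odd modulus $2k+1$, so as to produce an explicit Andrews--Gordon-type multisum for $E\,g_k$. When $R=3S$ this multisum collapses, via the pentagonal and triangular truncations of Theorems~\ref{thm-1}--\ref{thm-2}, to a generating function for $\ell$-regular partitions, and the factor $(q^R;q^R)_\infty$ is absorbed into the regularity condition; this is the case the present paper settles. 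For general $(R,S)$ the target is a manifestly structured series in which $(q^R;q^R)_\infty$ has been merged with $1/(q^S,q^{R-S};q^R)_\infty$ into a single partition-counting object—partitions into parts $\equiv\pm S\pmod R$ carrying a least-gap statistic refining Andrews--Merca's $M_k(n)$.

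The final step would be to read off nonnegativity from this series. Because the Bailey-lattice output is not termwise positive, I expect to argue combinatorially, exhibiting a sign-reversing involution on the underlying signed objects that cancels all negative contributions, in the spirit of Yee's and He--Ji--Zang's combinatorial proofs of the weak conjecture \cite{yee2015truncated,he2016bilateral}, now applied to the tail rather than the head of the theta expansion.

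The hard part, and the reason the general statement remains out of reach, is exactly this interaction between $(q^R;q^R)_\infty$ and $g_k$ for arbitrary $(R,S)$: the Bailey lattice most naturally generates odd moduli with a single distinguished residue, so realizing an arbitrary $R$—in particular even $R$, and $S$ not of the special shape—requires a flexible choice of Bailey pair, and the resulting multisum resists a uniform positive combinatorial model. I therefore view the resolution of the case $R=3S$ in this paper as the proof of concept for the strategy, with the uniform treatment of all $1\le S<R$ being the crux that is left open.
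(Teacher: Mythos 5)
Your opening reduction is correct and is the one worthwhile piece of the proposal: the numerator of \eqref{conj-strong} is indeed the bilateral tail $\Theta-P_k$ (via $j\mapsto-(j+1)$), so the expression equals $(q^R;q^R)_\infty\cdot g_k$ with $g_k$ having zero constant term and, for $n\geq 1$, the same coefficients as \eqref{conj-weak} at $\ell=k$. But there are two genuine gaps. First, your parenthetical disposal of the range $R/2\leq S<R$ by ``reflection symmetry'' fails as stated: under $S\mapsto R-S$ the theta function $\Theta$ is invariant, but the truncation window is not --- $\sum_{j=-k}^{k-1}$ becomes $\sum_{j=-k+1}^{k}$ after reindexing --- so Conjecture \ref{A-M-conj}, which is stated and proved only for $1\leq S<R/2$, does not directly give nonnegativity of $g_k$ on the complementary range; this step needs a separate argument. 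Second, and decisively, after the reduction you never establish nonnegativity in any case: the Bailey-lattice multisum for $(q^R;q^R)_\infty\,g_k$ at general modulus is not constructed, the claimed ``collapse'' at $R=3S$ is asserted by pointing at the paper rather than exhibited, and the sign-reversing involution is only hoped for. The attempt stops exactly where the difficulty begins, which is consistent with the fact that the statement is a conjecture, proved in the paper only for $R=3S$.

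Your description of how the paper settles $R=3S$ is also inaccurate, and the actual mechanism is worth knowing because it is completely different from your plan: Theorem \ref{R=3-stronger} uses neither the Bailey lattice nor a combinatorial involution. The paper rewrites the $R=3$ tail as $q^{(3k^2+k)/2}\,\frac{(q^3;q^3)_\infty}{(q;q)_\infty}\sum_{j\geq 0}(-1)^j q^{(3j^2+6jk+j)/2}\left(1-q^{2j+2k+1}\right)$, splits $\frac{(q^\ell;q^\ell)_\infty}{(q;q)_\infty}$ into the manifestly nonnegative factor $\frac{(q^\ell;q^\ell)_\infty}{(1-q^3)(q^6;q)_\infty}$ times $\frac{1}{(1-q)(1-q^2)(1-q^4)(1-q^5)}$, and then proves in Theorem \ref{coe-p5} that the latter factor times the alternating tail has nonnegative coefficients by an entirely elementary quantitative estimate: the quasi-polynomial closed form for $p_5(n)$ yields cubic bounds $p^{d}_5(n)<p_5(n)\leq p^{u}_5(n)$, and a four-case monotonicity analysis over the intervals $6n_0^2+6kn_0+n_0\leq n\leq 6(n_0+1)^2+6k(n_0+1)+n_0$ forces every coefficient to be positive; the case $R=3S$ then follows by $q\to q^S$ (Theorem \ref{conj-regular} with $\ell=3$). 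The Bailey lattice in the paper serves only the truncated identities of Sections \ref{pr-trunja} and \ref{pr-Gauss-2}, not the positivity result of Section \ref{BM-con}. So keep your $(q^R;q^R)_\infty\cdot g_k$ reformulation --- it cleanly isolates what is genuinely new beyond the weak conjecture --- but be aware that the one settled case rests on direct asymptotic bounding of a small partition function, not on the structural machinery you propose, and that for general $(R,S)$ the problem remains open.
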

Very recently, Ballantine and Feigon \cite{ballantine2024truncated} proved the cases for $k\in\{1, 2, 3\} $ of the above conjecture. They also derived a result \cite[Theorem 1.2]{ballantine2024truncated}  combinatorially which is weaker than the special case  $S=1  $ and $ R=3 $. In Section \ref{BM-con}, we shall provide an analytic proof for the cases when $ R=3S $ for $S\geq 1$, which is equivalent to the case $S=1$ and $R=3$ by replacing $q^S$ by $q$.

\begin{thm}\label{R=3-stronger}   Conjecture \ref{stronger-ja}  holds when $ R=3S $ for positive integers $S\geq1$.
\end{thm}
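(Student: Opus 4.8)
The plan is to reduce the $R=3S$ case of Conjecture~\ref{stronger-ja} to a single nonnegativity statement, namely a truncated pentagonal number theorem weighted by $3$-regular partitions, and then to settle that statement with the Bailey-lattice machinery already in play. First I would simplify the numerator of (\ref{conj-strong}) when $R=3S$. The two exponents become $3Sj(j+1)/2-Sj=Sj(3j+1)/2$ and $3Sj(j+1)/2+(j+1)S=S(j+1)(3j+2)/2$, and the substitution $j\mapsto -j-1$ in the second piece (with its sign) merges the two sums into
\[
\sum_{j\ge k}(-1)^j q^{Sj(3j+1)/2}+\sum_{j\le -k-1}(-1)^j q^{Sj(3j+1)/2}.
\]
By Euler's pentagonal number theorem (\ref{Euler}) with $q$ replaced by $q^S$, this tail equals $(q^S;q^S)_\infty-\sum_{j=-k}^{k-1}(-1)^j q^{Sj(3j+1)/2}$. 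Since $R=3S$ forces $\left(q^S,q^{2S};q^{3S}\right)_\infty=(q^S;q^S)_\infty/(q^{3S};q^{3S})_\infty$, the quantity in (\ref{conj-strong}) collapses to
\[
(-1)^k\,(q^{3S};q^{3S})_\infty\left(1-\frac{\sum_{j=-k}^{k-1}(-1)^j q^{Sj(3j+1)/2}}{(q^S;q^S)_\infty}\right).
\]

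Next I would apply the truncated form of Euler's theorem, either Andrews and Merca's (\ref{trun-euler}) or the equivalent Theorem~\ref{thm-1}, with $q\mapsto q^S$, to identify the bracketed factor as $(-1)^k G_S$, where $G_S$ is the manifestly nonnegative Andrews--Merca sum (so $G_S\ge 0$; equivalently its nonnegativity is the $R=3S$ instance of Conjecture~\ref{A-M-conj}). Hence (\ref{conj-strong}) equals $(q^{3S};q^{3S})_\infty\,G_S$. Because replacing $q$ by $q^S$ merely spreads coefficients onto multiples of $S$ and so preserves nonnegativity, it suffices to treat $S=1$, i.e. to prove that
\[
(q^3;q^3)_\infty\sum_{n\ge 1}\frac{q^{\binom{k}{2}+(k+1)n}}{(q;q)_n}{n-1\brack k-1}_q\ \ge\ 0,
\]
which, upon writing the full pentagonal theta over $(q;q)_\infty$, is precisely the assertion that the truncated pentagonal number theorem remains nonnegative once $1/(q;q)_\infty$ is replaced by the generating function $(q^3;q^3)_\infty/(q;q)_\infty$ for $3$-regular partitions.

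The main obstacle is exactly this last inequality: the factor $(q^3;q^3)_\infty$ has coefficients of both signs, so nonnegativity is not term-by-term, and the reduction to Conjecture~\ref{A-M-conj} alone does not close the argument (already $(q^3;q^3)_\infty/(1-q)$ has negative coefficients, so the summands cannot be handled individually). To overcome it I would prove an $\ell$-regular analogue of Theorem~\ref{thm-1}: a closed truncated identity for $(q^\ell;q^\ell)_\infty/(q;q)_\infty$ times the truncated pentagonal sum whose right-hand side is a \emph{manifestly nonnegative} multisum (only $q$-powers, reciprocals $1/(q;q)_{n_i}$, and a surviving ratio of the shape $(1-q^{\ell})/(1-q^{\ell+n})$ arranged to have nonnegative coefficients). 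I would derive this by running the Agarwal--Andrews--Bressoud Bailey lattice as in Theorem~\ref{lem-thm-1}, but modifying the chain so as to retain the extra modular factor $(q^\ell;q^\ell)_\infty$.

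Specializing such an $\ell$-regular identity at $\ell=3$ would establish the displayed inequality, and hence Theorem~\ref{R=3-stronger}, while the case $\ell=6$ would simultaneously yield the Ballantine--Merca $6$-regular conjecture mentioned in the introduction. As a fallback, should the Bailey-lattice form prove recalcitrant, I would instead seek a combinatorial proof, extending Andrews--Merca's interpretation through $M_k(n)$ and Kolitsch--Burnette's partition pairs to a sign-reversing involution on $3$-regular partition data. In either route the delicate point, and the crux of the whole argument, is controlling the mixed signs introduced by the factor $(q^\ell;q^\ell)_\infty$.
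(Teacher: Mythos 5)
Your reduction is correct and is, in substance, the same equivalence the paper exploits: after the $j\mapsto -j-1$ folding, Euler's theorem with $q\to q^S$, and the factorization $(q^S,q^{2S};q^{3S})_\infty=(q^S;q^S)_\infty/(q^{3S};q^{3S})_\infty$, the series \eqref{conj-strong} with $R=3S$ becomes the $q\to q^S$ image of
\[
(-1)^{k}\Big((q^{3};q^{3})_{\infty}-\frac{(q^{3};q^{3})_{\infty}}{(q;q)_{\infty}}\sum_{j=-k}^{k-1}(-1)^{j}q^{j(3j+1)/2}\Big),
\]
which (after $j\to -j$) is exactly the $\ell=3$ case of \eqref{general-regular}; the paper runs this very computation, in the opposite direction, in its proof of Theorem \ref{R=3-stronger}. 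But at that point you have only restated the theorem, not proved it: everything hinges on the inequality
\[
(q^{3};q^{3})_{\infty}\sum_{n\geq 1}\frac{q^{\binom{k}{2}+(k+1)n}}{(q;q)_{n}}{n-1\brack k-1}\ \geq\ 0,
\]
which you leave to a hoped-for ``$\ell$-regular analogue of Theorem \ref{thm-1}'' obtained by modifying the Bailey chain so as to retain the factor $(q^{\ell};q^{\ell})_{\infty}$, or else to an unconstructed sign-reversing involution. Neither strategy is carried out, and neither is known to succeed; it is telling that the paper, although built on the Bailey lattice, abandons that machinery entirely for this step, since the lattice naturally produces Andrews--Gordon-type multisums with denominator factors $1/(q;q)_{n_i}$ and gives no evident way to absorb a numerator product $(q^{\ell};q^{\ell})_{\infty}$ while keeping manifest nonnegativity.

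What the paper actually does at the crux is elementary but genuinely different. It writes $\frac{(q^{3};q^{3})_{\infty}}{(q;q)_{\infty}}=\frac{1}{(q,q^{2};q^{3})_{\infty}}$, pulls out the factor $\frac{1}{(1-q)(1-q^2)(1-q^4)(1-q^5)}$, and proves (Theorem \ref{coe-p5}) that this factor times the tail $\sum_{j\geq 0}(-1)^{j}q^{(3j^2+6jk+j)/2}\big(1-q^{2j+2k+1}\big)$ has nonnegative coefficients; the proof is an explicit analytic argument bounding $p_{5}(n)$ above and below by cubics and checking four cases over the intervals $6n_0^2+6kn_0+n_0\leq n\leq 6(n_0+1)^2+6k(n_0+1)+n_0$. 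The leftover factor $\frac{1}{\prod_{m\geq 7,\,3\nmid m}(1-q^{m})}$ is manifestly nonnegative, and the product of two nonnegative series is nonnegative. Without Theorem \ref{coe-p5} or a worked-out substitute for it, your proposal has a genuine gap at precisely the point you yourself identify as the delicate one.
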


As another consequence given in Section \ref{BM-con}, we confirm the  following  conjecture related to $6$-regular partitions proposed by Ballantine and Merca \cite{ballantine20236}
and further certify that it is still true for the more general cases with $\ell\geq 3$. Recently, Yao \cite{Yao6regular} proved it by using a formula of the number of partitions of $n$ into parts not exceeding 3 due to Cayley.

\begin{conj}[Ballantine and Merca {\cite[Conjecture 2]{ballantine20236}}] \label{B-M-conj-1}
For $n \geq 0$, $k\geq 1$,
$$
(-1)^{k}\Big(\alpha_n-\sum_{j=-k+1}^{k}(-1)^j b_6(n-j(3 j-1) / 2)\Big) \geq 0,
$$
with strict inequality if $n \geq k(3 k+1) / 2$, where $ b_6(n) $ is the number of the 6-regular partitions of n, and
$$
\alpha_n:= \begin{cases}(-1)^m, & \text { if } n=3 m(3 m-1), m \in \mathbb{Z} \\ 0, & \text { otherwise. }\end{cases}
$$
\end{conj}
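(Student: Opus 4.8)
The plan is to recast the alternating sum in Conjecture \ref{B-M-conj-1} as the coefficient sequence of a single $q$-series built from Theorem \ref{thm-1}, and then to prove that this series has nonnegative coefficients. First I would record the two generating-function facts that drive the reduction: the $6$-regular partitions are exactly the partitions into parts not divisible by $6$, so $\sum_{n\ge0}b_6(n)q^n=(q^6;q^6)_\infty/(q;q)_\infty$, while applying Euler's theorem \eqref{Euler} with $q$ replaced by $q^6$ shows that $\alpha_n$ is precisely the coefficient of $q^n$ in $(q^6;q^6)_\infty=\sum_{m}(-1)^m q^{3m(3m-1)}$. Next I would reindex the inner sum by $j\mapsto -j$, which turns $\sum_{j=-k+1}^{k}(-1)^jq^{j(3j-1)/2}$ into the truncated pentagonal sum $\sum_{j=-k}^{k-1}(-1)^jq^{j(3j+1)/2}$ appearing on the left of Theorem \ref{thm-1}, call it $E_k(q)$. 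Combining these, the generating function of $\alpha_n-\sum_{j=-k+1}^{k}(-1)^jb_6(n-j(3j-1)/2)$ equals $\frac{(q^6;q^6)_\infty}{(q;q)_\infty}\big((q;q)_\infty-E_k(q)\big)$.

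Then I would feed in Theorem \ref{thm-1} (equivalently the Andrews--Merca form \eqref{trun-euler}) to evaluate $\big((q;q)_\infty-E_k(q)\big)/(q;q)_\infty=(-1)^{k}\sum_{n\ge k}\frac{q^{\binom{k}{2}+(k+1)n}}{(q;q)_n}{n-1\brack k-1}$. Hence the conjecture's expression, which carries its own factor $(-1)^k$, is exactly the coefficient of $q^n$ in
\[
\Phi_k(q):=(q^6;q^6)_\infty\sum_{n\ge k}\frac{q^{\binom{k}{2}+(k+1)n}}{(q;q)_n}{n-1\brack k-1}.
\]
A short manipulation using $\frac{1}{(q;q)_n}{n-1\brack k-1}=\frac{1}{(1-q^n)(q;q)_{k-1}(q;q)_{n-k}}$ shows that the lowest power occurring in $\Phi_k$ is $q^{k(3k+1)/2}$, with coefficient $1$; this already accounts for the vanishing below $k(3k+1)/2$ and isolates the leading term responsible for the claimed strict inequality. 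So the whole statement reduces to proving that $\Phi_k(q)$ has nonnegative coefficients, all of which are in fact strictly positive from degree $k(3k+1)/2$ on. I would set this up for a general modulus, replacing $(q^6;q^6)_\infty$ by $(q^\ell;q^\ell)_\infty$, so that the same argument yields the advertised extension to $\ell=3$ and $\ell\ge6$.

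The crux, and the main obstacle, is the nonnegativity of $\Phi_k$: because $(q^\ell;q^\ell)_\infty$ has coefficients of both signs, its product with the (manifestly nonnegative) inner sum cannot be controlled term by term. To handle it I would use the Andrews--Merca combinatorial interpretation \eqref{p(n)-com}: the inner sum is $\sum_{n}M_k(n)q^n$, where $M_k(n)$ counts partitions of $n$ in which $k$ is the least non-occurring part and there are more parts $>k$ than $<k$. Expanding $(q^\ell;q^\ell)_\infty$ by Euler then presents $[q^n]\Phi_k$ as the signed sum $\sum_m(-1)^mM_k\!\big(n-\ell\,m(3m-1)/2\big)$, a ``second level'' truncated-theta positivity with $M_k$ in place of $p$. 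I would prove this is nonnegative by constructing a sign-reversing involution on the underlying decorated partitions that cancels all negative contributions and leaves a positive surplus; the place where $\ell$ enters is that the involution needs the spacing between consecutive multiples of $\ell$ to exceed the length of the relevant block of forbidden part-sizes, which is exactly what forces the hypothesis $\ell=3$ or $\ell\ge6$. Strict positivity for $n\ge k(3k+1)/2$ would then follow by exhibiting one explicit surviving partition in each such degree, using that the $\ell$-regular generating function has strictly positive coefficients in every degree.

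Finally I would note that the same master inequality should underlie Theorem \ref{R=3-stronger}: after the Jacobi triple product simplification $(q^S,q^{2S},q^{3S};q^{3S})_\infty=(q^S;q^S)_\infty$, valid precisely when $R=3S$, Conjecture \ref{stronger-ja} reduces to a nonnegativity of the same shape, now with $(q^{3S};q^{3S})_\infty$ playing the role of $(q^\ell;q^\ell)_\infty$, so the combinatorial core can be reused. I expect the involution in the previous paragraph to be the genuinely delicate step; the generating-function reductions are routine once Theorem \ref{thm-1} is in hand.
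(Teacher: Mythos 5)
Your generating-function reductions are sound, and they run parallel to the paper's own opening moves: your $\Phi_k(q)$ is exactly the series \eqref{BMConj} (with $(q^6;q^6)_\infty$ generalized to $(q^\ell;q^\ell)_\infty$), and your identification of its lowest term $q^{k(3k+1)/2}$ with coefficient $1$ is correct. The genuine gap is at the crux you yourself flag. Via \eqref{trun-euler} and \eqref{p(n)-com}, the assertion that $\sum_{m\in\mathbb{Z}}(-1)^m M_k\big(n-3m(3m-1)\big)\geq 0$ is not a stepping stone toward Conjecture \ref{B-M-conj-1}: it \emph{is} the conjecture, merely rewritten in terms of $M_k$. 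Your proposal therefore reduces the statement to an equivalent one and then declares that a sign-reversing involution ``would'' finish it, without constructing the involution, describing its fixed points, or giving any mechanism that produces the positive surplus. In this family of problems (Yee's proof of Conjecture \ref{A-M-conj}, Ballantine--Feigon's partial results on Conjecture \ref{stronger-ja}) such constructions are the entire difficulty, and nothing in your outline supplies one.

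Moreover, your proposed explanation of where the hypothesis $\ell=3$ or $\ell\geq 6$ enters --- that ``the spacing between consecutive multiples of $\ell$ must exceed the length of the relevant block of forbidden part-sizes'' --- does not correspond to any identifiable combinatorial constraint, and it cannot be the true obstruction: as the paper notes in its concluding remarks, Yao's result extends the nonnegativity in Theorem \ref{conj-regular} to all $\ell\geq 4$, so the hypothesis is an artifact of the method rather than something an involution could ``need.'' The paper's actual proof sidesteps the combinatorial core entirely. It factors $1/(q;q)_\infty$ so as to pair the truncated theta numerator with $1/\big((1-q)(1-q^2)(1-q^4)(1-q^5)\big)$, proves the resulting series nonnegative analytically in Theorem \ref{coe-p5} --- using the explicit cubic quasi-polynomial formula for $p_5(n)$, the two-sided bounds $p_5^{d}(n)<p_5(n)\leq p_5^{u}(n)$, and a four-case interval analysis with monotonicity checks on explicit cubics --- and then observes that the leftover factor $(q^\ell;q^\ell)_\infty/\big((1-q^3)(q^6;q)_\infty\big)$ has nonnegative coefficients precisely when $\ell=3$ or $\ell\geq 6$. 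That choice of factorization is what makes the estimates tractable: $p_5(n)$ admits a closed cubic formula, whereas $M_k(n)$ grows like a partition function and has no comparable bounds, so the alternating sum you propose to control has no analytic handle either. To rescue your approach you would have to actually build the involution, which is an open combinatorial problem, not a routine step.
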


This paper is organized as follows. In Section \ref{pre}, we shall introduce some basic definitions and notations for $q$-series, Bailey pairs, Bailey lattices and integer partitions.   In Section  \ref{pr-trunja}, by using Agarwal, Andrews and Bressoud's Bailey lattice,  we give the truncated forms for the Jacobi triple product theta series with odd and even basis, respectively. In Section \ref{pr-Gauss-2}, we will deduce the truncated versions for the  three classical theta series and their corresponding partition function inequalities. In Section \ref{BM-con}, we prove and extend Ballantine and Merca's conjectures to $\ell$-regular partitions with $\ell=3$ or $\ell\geq 6$. We also show that the stronger conjecture on the truncated Jacobi triple product series holds when $R=3S$ for $S\geq 1$. In Section \ref{conclusion}, we remark the relations of truncated theta series with other Bailey chain and Bailey lattice.

\section{Preliminaries}\label{pre}

Throughout this paper, we adopt  standard notations and terminologies
for $q$-series, see,  for example, \cite{Basic}. We assume that $|q|<1$.
The $q$-shifted factorial is denoted by
\[
(a)_n=(a;q)_n=\begin{cases}
1, & \text{\it if $n=0$}, \\[2mm]
(1-a)(1-aq)\cdots(1-aq^{n-1}), & \text{\it if $n\geq 1$}.
\end{cases}
\]
We also use the notation
\[
(a)_\infty=(a;q)_\infty=\prod_{n=0}^\infty (1-aq^n).
\]
There are more compact notations for the multiple $q$-shifted factorials
\begin{align*}
(a_1,a_2,\dots,a_m;q)_n&=(a_1;q)_n(a_2;q)_n \cdots(a_m;q)_n,\\
(a_1,a_2,\dots,a_m;q)_{\infty}&=(a_1;q)_{\infty}(a_2;q)_{\infty}\cdots
(a_m;q)_{\infty}.
\end{align*}
The Gaussian polynomials, or $q$-binomial coefficients are given by
\[
{n \brack k} =
\left\{
\begin{array}{ll}
0, & \hbox{if $k<0$ or $k>n$},\\
\dfrac{(q;q)_n}
{(q;q)_k(q;q)_{n-k}}, & \hbox{otherwise.}
\end{array}
\right.
\]
We also denote the case when $q\mapsto q^t$ by ${n\brack k}_{q^{t}}$ with $t\geq 2$.

Recall that the
basic hypergeometric series $_r\phi_s$ is defined as follows
\[
\qhyp{r}{s}{a_1,a_2,\dots,a_r}{b_1,b_2,\dots,b_s}{q,x}
=\sum_{n=0}^{\infty}\frac{(a_1,a_2,\dots,a_r;q)_n}
{(q,b_1,\dots,b_s;q)_n} \Big[(-1)^n q^{\binom{n}{2}}\Big]^{1+s-r} x^n.
\]
To prove the results in this paper, we will need the following transformation
formulas for basic hypergeometric series.
The well-known transformation formula   for $_2\phi_1$ series due to
Heine is \cite[(III.3)]{Basic}
\begin{align}\label{Heine-2}
{ }_2 \phi_1(a, b ; c ; q, z)=\frac{(a b z / c ; q)_{\infty}}{(z ; q)_{\infty}}{ }_2 \phi_1(c / a, c / b ; c ; q, a b z / c),
\end{align}
where $  \text{max}\{|z|,|abz/c|\}<1$ so that both $  _{2}\phi_{1} $ series converge.
Two of the transformation formulas for ${_3\phi_2}$ series are \cite[(III.9)-(III.10)]{Basic}
\begin{align}
&\qhyp{3}{2}{a,b,c}{d,e}{q,\frac{de}{abc}}\notag\\
&=\frac{(e/a,de/bc;q)_\infty}{(e,de/abc;q)_\infty}\,
\qhyp{3}{2}{a,d/b,d/c}{d,de/bc}{q,\frac{e}{a}}\label{3phi2-1}\\
&=\frac{(b,de/ab,de/bc;q)_\infty}{(d,e,de/abc;q)_\infty}\,
\qhyp{3}{2}{d/b,e/b,de/abc}{de/ab,de/bc}{q,b} \label{3phi2-2}
\end{align}
provided that $\max\{|b|, |e/a|, |de/abc|\}<1$.
The Rogers--Fine identity \cite[p.15]{Rogers1917} is given by
\begin{align}\label{R-R-id}
\sum_{n=0}^{\infty} \frac{(\alpha ; q)_n \tau^n}{(\beta ; q)_n}=\sum_{n=0}^{\infty} \frac{(\alpha ; q)_n(\alpha \tau q / \beta ; q)_n \beta^n \tau^n q^{n^2-n}\left(1-\alpha \tau q^{2 n}\right)}{(\beta ; q)_n(\tau ; q)_{n+1}},
\end{align}
where $|\tau|<1$.
The well-known Jacobi triple product identity is given by \cite[II.28]{Basic}
\begin{equation}\label{JTPI}
\sum_{k=-\infty}^\infty (-1)^k q^{k\choose 2} a^k =(a,q/a,q;q)_\infty.
\end{equation}

Bailey in \cite{Bailey1949} discovered a powerful tool to prove and construct  $q$-identities, which is given as a pair of sequences of rational functions $(\alpha_n, \beta_n)_{n\geq 0}$  with respect to $a$ such that
\begin{equation}\label{Baileyp}
\beta_n=\sum_{j=0}^n \frac{\alpha_j}{(q;q)_{n-j}(aq;q)_{n+j}}.
\end{equation}
It was named Bailey pair and has been studied widely. In \cite{agarwal1987bailey}, Agarwal, Andrews and Bressoud gave the following identity deduced from the Bailey lattice.
\begin{lem}\label{bailey-lattice}
Let $\alpha_n, \beta_n$ be sequences satisfying equation  (\ref{Baileyp}) and let $0 \leq i \leq k$, then
\[ 		\begin{aligned}
& \sum_{n \geq m_1 \geq m_2 \geq \ldots>m_k \geq 0} \frac{\left(\rho_1\right)_{m_1} \ldots\left(\rho_k\right)_{m_k}\left(\sigma_1\right)_{m_1} \ldots\left(\sigma_k\right)_{m _{k}}}{(q)_{n-m_1} \ldots(q)_{m _{k-1}-m _{k}}}  \\[8pt]
&\times\frac{\left(a / \rho_1 \sigma_1\right)_{n-m_1} \cdots\left(a / \rho_i \sigma_i\right)_{m_{i-1}-m_i}}{\left(a / \rho_1\right)_n \cdots\left(a / \rho_i\right)_{m_{i-1}}\left(a / \sigma_1\right)_n \cdots\left(a / \sigma_i\right)_{m_{i-1}}}  \\[8pt]
& \times \frac{\left(a q / \rho_{i+1} \sigma_{i+1}\right)_{m_i-m_{i+1}} \cdots\left(a q / \rho_k \sigma_k\right)_{m_{k-1}-m_k} a^{m_1+\cdots+m_k} q^{m_{i+1}+\cdots+m_k} \beta_{m _{k}}}{\left(a q / \rho_{i+1}\right)_{m_i} \cdots\left(a q / \rho_k\right)_{m _{k-1}}\left(a q / \sigma_{i+1}\right)_{m_i} \cdots\left(a q / \sigma_k\right)_{m _{k-1}}\left(\rho_1 \sigma_1\right)^{m_1} \cdots\left(\rho_k \sigma_k\right)^{m_k}} \\[8pt]
=&\ \frac{\alpha_0}{(q)_n(a)_n}+\sum_{t=1}^n \frac{\left(\rho_1,\sigma_1, \ldots,\rho_i,\sigma_i \right)_t a^{it}(1-a)}{\left(a / \rho_1,a / \sigma_1, \ldots,a / \rho_i,a / \sigma_i \right)_t\left(\rho_1 \cdots \rho_i \sigma_1 \cdots \sigma_i\right)^t(q)_{n-t}(a)_{n+t}}  \\[8pt]
&	\times\bigg(\frac{\left(\rho_{i+1},\sigma_{i+1}, \ldots,\rho_k,\sigma_k \right)_t(a q)^{(k-i) t} \alpha_t}{\left(a q / \rho_{i+1},a q / \sigma_{i+1},\ldots, a q / \rho_k,a q / \sigma_k \right)_t\left(\rho_{i+1} \cdots \rho_k \sigma_{i+1} \cdots \sigma_k\right)^t\left(1-a q^{2 t}\right)} \\[8pt]
&-\frac{\left(\rho_{i+1},\sigma_{i+1},\ldots,\rho_k,\sigma_k \right)_{t-1}(a q)^{(k-i)(t-1)} a q^{2 t-2} \alpha_{t-1}}{(a q / \rho_{i+1}, a q / \sigma_{i+1}, \ldots, a q / \rho_k,a q / \sigma_k )_{t-1}(\rho_{i+1} \cdots \rho_k \sigma_{i+1} \cdots \sigma_k)^{t-1}(1-a q^{2 t-2})}\bigg).
\end{aligned} \]
\end{lem}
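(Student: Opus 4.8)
The plan is to exhibit the identity as the composite effect of iterating two elementary operations on Bailey pairs, in the spirit of Agarwal, Andrews and Bressoud. The first operation is the ordinary Bailey chain step: if $(\alpha_n,\beta_n)$ is a Bailey pair relative to $a$ in the sense of \eqref{Baileyp}, then
\[
\alpha'_n=\frac{(\rho;q)_n(\sigma;q)_n}{(aq/\rho;q)_n(aq/\sigma;q)_n}\Big(\frac{aq}{\rho\sigma}\Big)^n\alpha_n,\qquad
\beta'_n=\sum_{j=0}^n\frac{(\rho;q)_j(\sigma;q)_j(aq/\rho\sigma;q)_{n-j}}{(q;q)_{n-j}(aq/\rho;q)_n(aq/\sigma;q)_n}\Big(\frac{aq}{\rho\sigma}\Big)^j\beta_j
\]
is again a Bailey pair relative to $a$. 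I would verify this by inserting \eqref{Baileyp} into $\beta'_n$, interchanging the order of summation, and evaluating the inner sum by the terminating $q$-Pfaff--Saalschütz summation; this is the routine half of the argument.

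The second, decisive operation is a single change of base $a\mapsto a/q$. It rests on the elementary splitting
\[
\frac{1}{(q;q)_{n-j}(a;q)_{n+j}}=\frac{1}{1-a}\left[\frac{1}{(q;q)_{n-j-1}(aq;q)_{n+j}}+\frac{q^{\,n-j}(1-aq^{2j})}{(q;q)_{n-j}(aq;q)_{n+j}}\right],
\]
which follows from $(a;q)_{n+j}=(1-a)(aq;q)_{n+j}/(1-aq^{n+j})$ together with $1-aq^{n+j}=(1-q^{n-j})+q^{n-j}(1-aq^{2j})$. Reading this against \eqref{Baileyp} shows that $(\alpha_n,\beta_n)$ relative to $a$ is carried to a Bailey pair relative to $a/q$ with $\beta$ left unchanged and
\[
\alpha_n\ \longmapsto\ (1-a)\left(\frac{\alpha_n}{1-aq^{2n}}-\frac{aq^{2n-2}\alpha_{n-1}}{1-aq^{2n-2}}\right)\qquad(n\geq1),
\]
the $n=0$ entry being unchanged; one can confirm this directly by Bailey-pair inversion. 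This telescoping difference is exactly the bracketed expression on the right-hand side of Lemma~\ref{bailey-lattice}.

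I would then assemble the statement in three stages, working outward from the innermost index $m_k$. First apply the chain step $k-i$ times at base $a$ with the pairs $(\rho_{i+1},\sigma_{i+1}),\dots,(\rho_k,\sigma_k)$; this creates the layers $m_{i+1},\dots,m_k$, the factors $(aq/\rho_j\sigma_j)_{\cdot}$ and $(aq/\rho_j)_{\cdot}$, and the power $(aq)^{(k-i)t}$ in the transformed $\alpha_t$. Second, apply the change of base once, producing the difference of $\alpha_t$ and $\alpha_{t-1}$ and lowering the base to $a/q$. Third, apply the chain step $i$ further times at base $a/q$ with $(\rho_1,\sigma_1),\dots,(\rho_i,\sigma_i)$; since the base is now $a/q$ the quantity $aq/\rho_j$ is replaced by $a/\rho_j$, which is the source of the asymmetry between the first $i$ parameters (entering as $a/\rho_j$) and the last $k-i$ (entering as $aq/\rho_j$), and which yields the power $a^{it}$. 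Finally I substitute \eqref{Baileyp} for $\beta_{m_k}$ on the left and read the fully transformed pair against the Bailey relation relative to $a/q$; the $t=0$ term collapses to $\alpha_0/\big((q)_n(a)_n\big)$ and the $t\geq1$ terms reproduce the stated sum over $t$.

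The part demanding the most care is the change-of-base step and its interaction with the two chains. The chain step at base $a$ carries the weight $(aq/\rho_j\sigma_j)^{m_j}$, whereas at base $a/q$ it carries $(a/\rho_j\sigma_j)^{m_j}$; reconciling these with the uniform factor $a^{m_1+\cdots+m_k}$ on the left is precisely what produces the extra $q^{m_{i+1}+\cdots+m_k}$ attached to the inner layers, and one must check that the denominators $1-aq^{2t}$, $1-aq^{2t-2}$ and the weights $(aq)^{(k-i)t}$, $(aq)^{(k-i)(t-1)}$ in the difference emerge with exactly the stated exponents once the three stages are composed. A clean way to keep this bookkeeping under control is to prove the statement by induction on the number of chain steps, isolating the change of base as a single lemma verified once and for all by the splitting identity above.
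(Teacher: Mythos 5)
Your proposal is correct, and it is essentially the argument standing behind the paper's statement: the paper gives no proof of Lemma~\ref{bailey-lattice} at all, quoting it from Agarwal--Andrews--Bressoud, whose original derivation is precisely the iteration of Bailey-chain steps that you reconstruct (the paper records the two constituent lemmas only later, in Section~\ref{conclusion}). The one genuine departure is your base-change step: you lower $a\mapsto a/q$ while leaving $\beta$ unchanged, whereas the second lemma of Agarwal--Andrews--Bressoud (the one containing \eqref{lem-2}) inserts a parameter pair $(\rho,\sigma)$ and lowers the base in a single move; your version is its specialization $\sigma=a/\rho$, so your assembly is $k-i$ chain steps at base $a$, one pure base change, then $i$ chain steps at base $a/q$, instead of $k-i$ steps, one combined step, then $i-1$ further steps. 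Both bookkeepings compose to the stated identity, and your splitting identity does carry the verification: substituting your $\alpha'_j$ into the Bailey relation relative to $a/q$ and collecting the coefficient of $\alpha_m$ produces the numerator $(1-aq^{n+m})-aq^{2m}(1-q^{n-m})=1-aq^{2m}$, which cancels the denominator $1-aq^{2m}$ and leaves $1/\big((q)_{n-m}(aq)_{n+m}\big)$, i.e.\ exactly the terms of $\beta_n$; the powers $a^{it}$, $(aq)^{(k-i)t}$ and the factor $q^{m_{i+1}+\cdots+m_k}$ then fall out of the composition exactly as you say. One wording slip at the end: you do not ``substitute \eqref{Baileyp} for $\beta_{m_k}$'' on the left-hand side --- $\beta_{m_k}$ stays intact inside the multiple sum; what you do is read off the defining relation \eqref{Baileyp}, with $a$ replaced by $a/q$, for the fully transformed pair, whose $t=0$ term is $\alpha_0/\big((q)_n(a)_n\big)$ since all the multipliers and the prefactor $(1-a)/(1-a)$ are trivial at $t=0$.
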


Based on the above result, we mainly adopt the following Bailey pair which is deduced from a $_{6}\phi_{5} $ series \cite[(II.21)]{Basic} to derive our main results
\begin{align}\label{BP6phi5}
&\alpha_n=\frac{\left(1-a q^{2 n}\right)}{(1-a)} \frac{(-1)^{n} q^{\binom{n}{2}} (a)_{n} (b)_{n} (c)_{n}}{(q)_{n} (aq/b)_{n} (aq/c)_{n}} \left(\frac{aq}{bc}\right)^{n},  \nonumber \\[8pt]
&\beta_n=\frac{(a q / b c ; q)_n}{(q,a q / b, a q /c ; q)_n}.
\end{align}
Furthermore, letting $ b\mapsto q $, $ c\mapsto (aq)^{\frac{1}{2}} $ in the above Bailey pair, we obtain
\begin{align}\label{Bailey-pair-1}
\alpha_n=\frac{\left(1-a q^{2 n}\right)}{(1-a)} (-1)^{n} q^{\binom{n}{2}}(a^{\frac{1}{2}}q^{-\frac{1}{2}})^{n}, \quad
\beta_n=\frac{(a^{\frac{1}{2}}q^{-\frac{1}{2}}; q)_n}{(q,a , a^{\frac{1}{2}}q^{\frac{1}{2}} ; q)_n}.
\end{align}
Letting $ b\mapsto q $, $ c\to \infty$ in \eqref{BP6phi5}, we also have  another Bailey pair
\begin{align}\label{Bailey-pair-2}
\alpha_n=\frac{\left(1-a q^{2 n}\right)}{(1-a)} a^{n} q^{n^{2}-n}, \quad
\beta_n=\frac{1}{{(q,a} ; q)_n}.
\end{align}

Based on the truncated theta series, we will further study the inequalities  related to certain integer partitions. Recall that a partition of a nonnegative integer $n$ is a finite nonincreasing sequence of positive integers $\lambda_1,\lambda_2,\ldots, \lambda_k$ such that $\sum_{i=1}^k \lambda_i=n$.
Let $p(n)$ denote  the number of integer partitions of $n$ with $p(0)=1$. It is known that the generating function for $p(n)$ is
\[
\sum_{n=0}^\infty p(n)q^n=\frac{1}{(q;q)_\infty},
\]
see, for example \cite{Partitions}.
Hirschhorn and Sellers \cite{hirschhorn2010arithmetic} introduced the notion  $\pod(n)$ which denotes the number of partitions of $ n $ with distinct  odd parts that has the following generating function
\begin{equation}\label{gf-pod}
\sum_{n=0}^{\infty} {\pod}(n)q^{n}=\frac{(-q;q^{2})_{\infty}}{(q^{2};q^{2})_{\infty}}.
\end{equation}
Corteel and Lovejoy  \cite{corteel2004overpartitions} introduced overpartitions which are partitions of $ n $ whose first occurrence of a part may be overlined or not. Let $ \overline{p}(n) $ denote  the number of overpartitions then its generating function is
\begin{equation}\label{gf-over}
\sum_{n=0}^{\infty} \overline{p}(n)q^{n}=\frac{(-q;q)_{\infty}}{(q;q)_{\infty}}.
\end{equation}
The $\ell$-regular partitions are the partitions with no part  divisible by $ \ell $, which  parameterize the irreducible $ \ell $-modular representations of the symmetric group when $ \ell  $ is prime  \cite{RepJames}. Let $ b_{\ell}(n) $ denote  the number of $\ell$-regular partitions of $n$, then its generating function is given by
\begin{equation}\label{gf-regular}
\sum_{n=0}^{\infty} b_{\ell}(n)q^{n}=\frac{(q^{\ell};q^{\ell})_{\infty}}{(q;q)_{\infty}}.
\end{equation}
We also denote $p_{5}(n)$  the number of partitions of $n$ only with parts $1$, $2$, $4$ or $5$, whose generating function is given by
\begin{equation}\label{gf-p5}
\sum_{n=0}^{\infty} p_{5}(n)q^{n}=\frac{1}{(1-q)(1-q^2)(1-q^4)(1-q^{5})},
\end{equation}
see \cite[A029007]{OEIS}, for example.

\section{The Andrews--Gordon type truncated Jacobi  triple product series}\label{pr-trunja}

By employing  Agarwal, Andrews and Bressoud's identity as given in Lemma \ref{bailey-lattice}, we can obtain the Andrews--Gordon type truncated form for the Jacobi triple product series with odd and even basis.

Firstly, note that by  letting $ n\to \infty  $, $ \rho_{s} $, $\sigma_{s}$ $\to \infty$ for $ s=2,3,\ldots, k$, and $ \rho_{1}\mapsto\rho$, $ \sigma_{1} \mapsto\sigma$ in Lemma  \ref{bailey-lattice}, we can simplify the identity to the following form
\begin{align}\label{even-pf}
& \sum_{m_1 \geq m_2 \geq \cdots \geq m_k \geq 0} \frac{(\rho,\sigma;q)_{m_1} a^{m_1+m_2+\cdots+m_k} q^{m_2^2+m_3^2+\cdots+m_k^2-m_2-m_3-\cdots-m_i} \beta_{m_k}}{  (\rho \sigma)^{m_{1}}   (q)_{m_1-m_2}(q)_{m_2-m_3} \cdots(q)_{m_{k-1}-m_k}} \notag \\[8pt]
&=  \frac{(a/\rho,a/\sigma;q)_{\infty}}{(a/ \rho\sigma,a q;q)_{\infty}} \sum_{n \geq 0} \frac{ (\rho,\sigma;q)_{n} a^{k n} q^{\left(k-1\right) n^2+\left(1-i\right) n} \alpha_n}{  (\rho \sigma)^{n}  (a/\rho,a/\sigma;q)_{n}\left(1-a q^{2 n}\right)} \notag\\[8pt]
&\quad- \frac{(a/\rho,a/\sigma;q)_{\infty}}{(a/ \rho\sigma,a q;q)_{\infty}} \ \sum_{n \geq 1} \frac{ (\rho,\sigma;q)_{n} a^{k n+i-k+1} q^{\left(k-1\right) n^2+\left(3+i-2 k\right) n+k-i-2} \alpha_{n-1}}{(\rho \sigma)^{n} (a/\rho,a/\sigma;q)_{n}\left(1-a q^{2 n-2}\right)}.
\end{align}	
By  inserting into different Bailey pairs $(\alpha_n, \beta_n)$, it will lead  to different theta series identities.   Let us first consider the odd basis case and give the proof of Theorem  \ref{lem-thm-1}.

{\noindent \it  Proof of Theorem \ref{lem-thm-1}}. Setting $ \sigma\to \infty $ in (\ref{even-pf}), it leads to that
\begin{align}\label{pf-12}
& \sum_{m_1 \geq m_2 \geq \cdots \geq m_k \geq 0} \frac{(\rho)_{m_1}(-\rho )^{-m_1}  a^{m_1+m_2+\cdots+m_k} q^{\frac{m_1\left(m_1-1\right)}{2}+m_2^2+m_3^2+\cdots+m_k^2-m_2-m_3-\cdots-m_i} \beta_{m_k}}{(q)_{m_1-m_2}(q)_{m_2-m_3} \cdots(q)_{m_{k-1}-m_k}} \notag \\[8pt]
&\quad =  \frac{(a/\rho)_{\infty}}{(a q)_{\infty}} \sum_{n \geq 0} \frac{(-\rho )^{-n} (\rho)_{n} a^{k n} q^{\left(k-\frac{1}{2}\right) n^2+\left(\frac{1}{2}-i\right) n} \alpha_n}{(a/\rho)_{n}\left(1-a q^{2 n}\right)} \notag\\[8pt]
&\qquad -\frac{(a / \rho)_{\infty}}{(a q)_{\infty}} \sum_{n \geq 1} \frac{(-\rho )^{-n} (\rho)_{n} a^{k n+i-k+1} q^{\left(k-\frac{1}{2}\right) n^2+\left(\frac{5}{2}+i-2 k\right) n+k-i-2} \alpha_{n-1}}{(a/\rho)_{n}\left(1-a q^{2 n-2}\right)}.
\end{align}
Taking the limit $ \rho\to \infty  $, then inserting the Bailey pair (\ref{Bailey-pair-1}) into (\ref{pf-12}) and letting $ a\mapsto q^{2\ell +1}$, we  have
\begin{align}\label{progress-pr-1}
& \sum_{m_1 \geq m_2 \geq \cdots \geq m_k \geq 0} \frac{  (q^{2\ell+1})^{{m_1}+m_2+\cdots+m_k} q^{{m_1^{2}+m_2^2+m_3^2+\cdots+m_k^2-m_1-m_3-\cdots-m_i}} (1-q^{\ell })}{(q)_{m_1-m_2}(q)_{m_2-m_3} \cdots(q)_{m_{k-1}-m_k} (q)_{m_{k}} (q^{2\ell+1}) _{m_{k}} (1-q^{m_{k} +\ell})} \notag \\
&=  \frac{1}{( q^{2\ell+1})_{\infty}} \sum_{n \geq 0}(-1)^{n} { (q^{2\ell+1})^{(k+\frac{1}{2}) n} q^{(k+\frac{1}{2}) n^2-(i+1 )n } } \notag\\
& \quad -\frac{{1}}{(q^{2\ell+1})_{\infty}} \sum_{n \geq 1} (-1)^{n-1} (q^{2\ell+1})^{(k+\frac{1}{2})n+i-k+\frac{1}{2}} q^{(k+\frac{1}{2}) n^2+(i-2 k) n+k-i-\frac{1}{2}} \notag\\
&=  \frac{1}{( q^{2\ell+1})_{\infty}} \sum_{n \geq 0}(-1)^{n} {  q^{(k+\frac{1}{2}) (n+\ell)^2+(k-i-\frac{1}{2} )n -(k+\frac{1}{2})\ell^{2}} } \notag\\
&\quad +\frac{{1}}{(q^{2\ell+1})_{\infty}} \sum_{n \geq 1} (-1)^{n}  q^{(k+\frac{1}{2}) (n+\ell)^2+(i- k+\frac{1}{2}) n+2\ell(i-k+\frac{1}{2}) -(k+\frac{1}{2})\ell^{2} }.
\end{align}
Substituting $ n\mapsto n-\ell $ in the first sum and $ n\mapsto -n-\ell $ in the second sum in the above result, it implies that
\begin{align}\label{thm-1-pf}
& \sum_{m_1 \geq m_2 \geq \cdots \geq m_k \geq 0} \frac{  (q^{2\ell+1})^{{m_1}+m_2+\cdots+m_k} q^{{m_1^{2}+m_2^2+m_3^2+\cdots+m_k^2-m_{1}-m_2-m_3-\cdots-m_i}} (1-q^{\ell })}{(q)_{m_1-m_2}(q)_{m_2-m_3} \cdots(q)_{m_{k-1}-m_k} (q)_{m_{k}} (q^{2\ell+1}) _{m_{k}} (1-q^{m_{k} +\ell})} \notag\\[8pt]
&=  (-1)^{\ell}\frac{q^{-(k+\frac{1}{2})\ell ^{2} -(k-i-\frac{1}{2})\ell}}{( q^{2\ell+1})_{\infty}} \notag\\[8pt]
&\quad \times \bigg( \sum_{n \geq \ell}(-1)^{n}   q^{(k+\frac{1}{2}) n^2+(k-i-\frac{1}{2} )n }
+\sum_{n \leq-\ell- 1} (-1)^{n}  q^{(k+\frac{1}{2}) n^2+( k-i-\frac{1}{2}) n} \bigg).
\end{align}
Multiplying both sides of the above equation by $ 1/(q)_{2\ell} $, reindexing with $m_{j}=N_j=n_j+n_{j+1}+\cdots+n_{k}$ and applying the Jacobi  triple product identity \eqref{JTPI}, we complete the proof. \qed

The even basis case for the truncated Jacobi triple product series can be derived similarly as follows.
\begin{thm}\label{thm-even} We have
\[ \begin{aligned}
&\frac{1}{\left(q^{k-i}, q^{k+i}, q^{2k} ; q^{2k}\right)_{\infty} }\sum_{n =-\ell+1} ^{\ell-1}  (-1)^{n} q^{kn^2-in } \\[8pt]
&= 1+ \frac{2(-1)^{\ell-1}q^{k\ell^{2}-i\ell}}{\left(q^{k-i}, q^{k+i}, q^{2k} ; q^{2k}\right)_{\infty} }\\
&\quad \times \sum_{n_1, n_2, \ldots, n_k \geq 0} \frac{(-1)^{N_{1}} (q,-q)_{N_1+\ell-1}q^{2\ell( N_2+\cdots+N_k) +N_2^2+N_3^2+\cdots+N_k^2-N_2-N_3-\cdots-N_i} }{(q)_{n_1}(q)_{n_2} \cdots(q)_{n_{k-1}}(q)_{n_{k}} (q )_{n_{k}+2\ell-1}},
\end{aligned} \]
where  $ i $, $ k $,  $\ell$ are positive integers with $ 1\leq i \leq k-1 $    and $N_j=n_j+n_{j+1}+\cdots+n_{k}$.
\end{thm}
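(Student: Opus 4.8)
The plan is to re-run the argument of Theorem \ref{lem-thm-1} through the same simplified Bailey lattice (\ref{even-pf}), but with two changes that convert the odd base $q^{2k+1}$ into the even base $q^{2k}$. Rather than sending $\sigma\to\infty$ and then $\rho\to\infty$, I would keep both parameters finite and set $\rho=q^{\ell}$, $\sigma=-q^{\ell}$, and insert the Bailey pair (\ref{Bailey-pair-2}) with the even specialization $a=q^{2\ell}$ in place of the odd choice $a=q^{2\ell+1}$. The role of (\ref{Bailey-pair-2}) is that its $\alpha_n=\frac{(1-aq^{2n})}{(1-a)}a^nq^{n^2-n}$ carries the factor $q^{n^2}$; combined with the $q^{(k-1)n^2}$ already present in (\ref{even-pf}) this produces the quadratic form $q^{kn^2}$, hence a Jacobi triple product in base $q^{2k}$. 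I would work from (\ref{even-pf}) directly rather than from the intermediate step (\ref{pf-12}), because in (\ref{even-pf}) the top chain variable $m_1$ does not appear in the $q$-exponent; this is exactly what lets the first chain variable package into a pure $(q,-q)$ factor instead of contributing a $q^{N_1^2}$ term.

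The reason this choice of $\rho,\sigma$ works is a pair of cancellations. Since $(q^{\ell},-q^{\ell};q)_n=(q^{2\ell};q^2)_n$ and likewise $(a/\rho,a/\sigma;q)_n=(q^{\ell},-q^{\ell};q)_n=(q^{2\ell};q^2)_n$, the ratio $\frac{(\rho,\sigma;q)_n}{(a/\rho,a/\sigma;q)_n}$ collapses to $1$ on the theta side, while $(\rho\sigma)^{-n}=(-q^{2\ell})^{-n}=(-1)^nq^{-2\ell n}$ supplies precisely the alternating sign of a genuine theta series. A short computation then turns the two sums on the right of (\ref{even-pf}) into $\frac{1}{1-q^{2\ell}}\sum_{n\geq 0}(-1)^nq^{kn^2+(2k\ell-i)n}$ and its $\alpha_{n-1}$-counterpart. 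Reindexing $n\to n-\ell$ in the first and $n\to -n-\ell+1$ in the second, both exponents become $kn^2-in$ and the two sums merge into $(-1)^{\ell}q^{i\ell-k\ell^2}\sum_{|n|\geq \ell}(-1)^nq^{kn^2-in}$, i.e. the full bilateral series with the symmetric block $-\ell+1\leq n\leq \ell-1$ deleted. Applying the Jacobi triple product identity (\ref{JTPI}) in base $q^{2k}$ with $a=q^{k-i}$ identifies the full bilateral sum with $(q^{k-i},q^{k+i},q^{2k};q^{2k})_\infty$, so $\sum_{|n|\geq \ell}$ equals this product minus the truncated theta series in the statement.

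On the combinatorial (left-hand) side I would reindex $m_j=N_j$ with $n_j=m_j-m_{j+1}$ and $n_k=m_k$. The top-of-chain factor $(\rho,\sigma;q)_{m_1}(\rho\sigma)^{-m_1}a^{m_1}$ collapses to $(-1)^{N_1}(q^{2\ell};q^2)_{N_1}=(-1)^{N_1}(q,-q)_{N_1+\ell-1}/(q^2;q^2)_{\ell-1}$, the numerator of the claimed summand up to a constant, while $\beta_{m_k}=\frac{1}{(q,q^{2\ell};q)_{m_k}}$ becomes $\frac{(q)_{2\ell-1}}{(q)_{n_k}(q)_{n_k+2\ell-1}}$. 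Substituting these back and dividing by the product, I would solve for the truncated theta series; here the constant prefactor $\frac{(a/\rho,a/\sigma;q)_\infty}{(a/\rho\sigma,aq;q)_\infty}$ of (\ref{even-pf}) equals $\frac{(q^{2\ell};q^2)_\infty}{2(-q;q)_\infty(q^{2\ell+1};q)_\infty}$, the factor $2$ arising from $(-1;q)_\infty=2(-q;q)_\infty$, and it is this factor that produces the $2$ in front of the sum. One then checks that the remaining $q$-Pochhammer constants telescope, since $(q)_{2\ell-1}(1-q^{2\ell})(q^{2\ell+1};q)_\infty=(q;q)_\infty$ and $(q^2;q^2)_{\ell-1}(q^{2\ell};q^2)_\infty=(q^2;q^2)_\infty$, so that the coefficient reduces exactly to $2(-1)^{\ell-1}q^{k\ell^2-i\ell}/(q^{k-i},q^{k+i},q^{2k};q^{2k})_\infty$. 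The main obstacle is locating the specialization $\rho=q^\ell$, $\sigma=-q^\ell$, $a=q^{2\ell}$ that simultaneously yields the $(q,-q)_{N_1+\ell-1}$ factor downstairs, the ratio $1$ upstairs, and the alternating sign, together with the careful sign and constant bookkeeping in the two reindexings; the remainder is routine $q$-series manipulation parallel to Theorem \ref{lem-thm-1}.
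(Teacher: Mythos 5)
Your proposal is correct and takes essentially the same route as the paper's proof: the paper likewise inserts the Bailey pair \eqref{Bailey-pair-2} into \eqref{even-pf} with $\rho=a^{1/2}=q^{\ell}$, $\sigma=-a^{1/2}=-q^{\ell}$, $a=q^{2\ell}$, performs the same reindexings $n\to n-\ell$ and $n\to -n-\ell+1$ to merge the two sums into the bilateral tail, and concludes via the Jacobi triple product identity \eqref{JTPI} and the substitution $m_j=N_j$. Your constant bookkeeping (the factor $2$ from $(-1;q)_\infty=2(-q;q)_\infty$ and the telescoping Pochhammer identities) is accurate and in fact spells out details the paper leaves implicit.
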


\begin{proof}
Inserting the Bailey pair (\ref{Bailey-pair-2}) into (\ref{even-pf}),  substituting  $ \rho \mapsto  a^{\frac{1}{2}} $, $ \sigma \mapsto -a^{\frac{1}{2}}$ and then setting $a\mapsto q^{2\ell} $,   we  have
\begin{align}
& \sum_{m_1 \geq m_2 \geq \cdots \geq m_k \geq 0} \frac{(-1)^{m_{1}}(q^{\ell},-q^{\ell};q)_{m_1} (q^{2\ell})^{m_2+\cdots+m_k} q^{m_2^2+m_3^2+\cdots+m_k^2-m_2-m_3-\cdots-m_i} }{(q)_{m_1-m_2}(q)_{m_2-m_3} \cdots(q)_{m_{k-1}-m_k}(q,q^{2\ell})_{m_{k}}} \nonumber \\[8pt]
&=\frac{(q^{\ell},-q^{\ell};q)_{\infty}}{(-1,q^{2\ell};q)_{\infty}} \Big(\sum_{n \geq 0} (-1)^{n} q^{k(n^{2}+2\ell n)-in}-   \sum_{n \geq 1} (-1)^{n} q^{k(n^{2}+2(\ell-1)n )+in+(2\ell-1)(i-k)}\Big)\nonumber \\
&=\frac{(q^{\ell},-q^{\ell};q)_{\infty}}{(-1,q^{2\ell};q)_{\infty}}(-1)^{\ell}q^{-k\ell^{2}+i\ell} \Big(\sum_{n \geq \ell} (-1)^{n} q^{kn^{2}-in}+   \sum_{n \leq -\ell} (-1)^{n} q^{kn^{2}-in}\Big), \label{pfevenmod}
\end{align}
where the last step follows from substituting $n\mapsto n-\ell$ in the first sum and $n \mapsto -n-\ell+1$ in the second sum.
It completes the proof by further applying the Jacobi triple product identity \eqref{JTPI} and  replacing $m_j$  with $N_j=n_j+n_{j+1}+\cdots+n_{k}$.
\end{proof}

Note that if we set $ \ell=0 $ in the procedure to prove Theorem \ref{thm-even}, it leads to the following result.

\begin{cor} We have
\begin{multline}\label{bressoudeven}
\sum_{m_1 \geq m_2 \geq \cdots \geq m_k \geq 1} \frac{(-1)^{m_{1}} (q,-q;q)_{m_1-1}  q^{m_2^2+m_3^2+\cdots+m_k^2-m_2-m_3-\cdots-m_i} }{(q)_{m_1-m_2}(q)_{m_2-m_3} \cdots(q)_{m_{k-1}-m_k}(q)_{m_{k}}(q)_{m_{k}-1}}\\[7pt]=\frac{1}{2}\left(q^{k-i}, q^{k+i}, q^{2k} ; q^{2k}\right)_{\infty},
\end{multline}
where  $ i $, $ k $ are positive integers with $ 1\leq i \leq k $.
\end{cor}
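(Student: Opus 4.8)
The statement is the $\ell=0$ degeneration of the even-basis computation behind Theorem~\ref{thm-even}, so the plan is to rerun that derivation and let $\ell\to0$. The one point to respect is that $\ell=0$ cannot be substituted literally: the Bailey pair (\ref{Bailey-pair-2}) carries $\beta_n=1/(q,a;q)_n$ with $a=q^{2\ell}$, and $(1;q)_n=0$ for $n\ge1$, so a finite answer only emerges after several $0/0$ forms are unwound. Accordingly I would work with the specialization at the level of the displayed identity obtained in the proof of Theorem~\ref{thm-even} (the line feeding into (\ref{pfevenmod}), before the $\ell$-shifts introduce any $q^{\ell^2}$), treat $x=q^\ell$ as a continuous parameter, and take the limit $x\to1$ on both sides.

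On the summation side, each term carries $(q^\ell,-q^\ell;q)_{m_1}$ upstairs and $(q^{2\ell};q)_{m_k}$ downstairs. For $m_1,m_k\ge1$ I would extract the vanishing factors by writing $(q^\ell;q)_{m_1}=(1-q^\ell)(q^{\ell+1};q)_{m_1-1}$ and $(q^{2\ell};q)_{m_k}=(1-q^{2\ell})(q^{2\ell+1};q)_{m_k-1}$; the zeros cancel through $\frac{1-q^\ell}{1-q^{2\ell}}=\frac1{1+q^\ell}\to\frac12$, and together with $(-q^\ell;q)_{m_1}\to(-1;q)_{m_1}=2(-q;q)_{m_1-1}$ and $(q^{2\ell})^{m_2+\cdots+m_k}\to1$ the two factors of $2$ cancel and each surviving term limits to exactly the summand of (\ref{bressoudeven}). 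The constraint $m_k\ge1$ then arises for free: any term with $m_k=0$ and $m_1\ge1$ retains an uncancelled factor $1-q^\ell\to0$ and dies, while the all-zero term stays equal to $1$.

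On the product side I would take the limit of the prefactor $\frac{(q^\ell,-q^\ell;q)_\infty}{(-1,q^{2\ell};q)_\infty}$ by the same device: $\frac{(q^\ell;q)_\infty}{(q^{2\ell};q)_\infty}=\frac1{1+q^\ell}\,\frac{(q^{\ell+1};q)_\infty}{(q^{2\ell+1};q)_\infty}\to\frac12$ while $\frac{(-q^\ell;q)_\infty}{(-1;q)_\infty}\to1$, so the prefactor tends to $\frac12$, which is the origin of the constant $\frac12$ on the right of (\ref{bressoudeven}). I would then collapse the two one-sided theta sums into the bilateral sum $\sum_{n\in\mathbb Z}(-1)^nq^{kn^2-in}$; writing $kn^2-in=2k\binom n2+(k-i)n$ and applying the Jacobi triple product identity (\ref{JTPI}) with base $q^{2k}$ and $a=q^{k-i}$ produces $(q^{k-i},q^{k+i},q^{2k};q^{2k})_\infty$. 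The step I expect to be delicate is neither the $q$-series manipulation nor the JTP evaluation but the bookkeeping of the boundary contributions in the $x\to1$ limit: the all-zero summand on the left and the value $n=0$, which is shared by the two one-sided theta sums, both survive the limit and jointly fix the final additive constant, so these must be tracked with care, and the interchange of the limit with the infinite summation must be justified (for $k\ge2$ this is guaranteed by the quadratic growth of the exponent $m_2^2+\cdots+m_k^2$).
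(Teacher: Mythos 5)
Your route is the same as the paper's: take $\ell\to0$ in the identity \eqref{pfevenmod} behind Theorem \ref{thm-even}, extract the vanishing factors, and finish with the Jacobi triple product \eqref{JTPI}. Your treatment of the generic terms is correct (and more careful than the paper's own write-up): for $m_1\ge m_k\ge1$ the factor $\tfrac{1-q^\ell}{1-q^{2\ell}}\to\tfrac12$ and the factor $(-q^\ell;q)_{m_1}\to 2(-q;q)_{m_1-1}$ cancel, so each such term tends to the summand of \eqref{bressoudeven}; terms with $m_k=0<m_1$ die; the prefactor on the product side tends to $\tfrac12$. The gap is that the one step you explicitly defer --- the ``bookkeeping of the boundary contributions'' --- is the step on which the statement stands or falls, and when it is actually carried out your proof does not close. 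Write $B$ for the left side of \eqref{bressoudeven} and $\Theta=\sum_{n\in\mathbb Z}(-1)^nq^{kn^2-in}=(q^{k-i},q^{k+i},q^{2k};q^{2k})_\infty$. By your own accounting the left side of \eqref{pfevenmod} tends to $1+B$ (the all-zero term is identically $1$), while the right side tends to $\tfrac12(\Theta+1)$: the two one-sided theta sums overlap in $n=0$, so they do \emph{not} ``collapse into the bilateral sum'' but give $\Theta+1$. Equating forces $B=\tfrac12\Theta-\tfrac12=\tfrac12\bigl[(q^{k-i},q^{k+i},q^{2k};q^{2k})_\infty-1\bigr]$, which differs from \eqref{bressoudeven} by the additive constant $\tfrac12$; the two boundary constants ($1$ on the left, $\tfrac12$ on the right) do not cancel each other. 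So the proposal, completed honestly, proves a corrected identity rather than the stated one.

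You should know that this is not a repairable defect of your write-up alone: the paper's own proof commits exactly the slip your deferred bookkeeping conceals. It factors $\lim_{\ell\to0}\tfrac{1-q^\ell}{1-q^{2\ell}}$ out of every term on the left, including the all-zero term; but $1=\tfrac{1-q^\ell}{1-q^{2\ell}}(1+q^\ell)$ with $1+q^\ell\to2$, so after the common factor is cancelled the left side should read $2+\sum_{m_k\ge1}(\cdots)$, not $1+\sum_{m_k\ge1}(\cdots)$, and the paper's conclusion should be $\Theta-1$, not $\Theta$, for its multiple sum. A numerical check supports the corrected constant: for $k=2$, $i=1$, $q=0.1$ the Abel/Ces\`aro value of the left side of \eqref{bressoudeven} is $\approx-0.0505=\tfrac12(\Theta-1)$, whereas $\tfrac12\Theta\approx0.449$. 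Finally, your closing claim that the limit--sum interchange for $k\ge2$ is ``guaranteed by the quadratic growth of the exponent'' is also not right: that exponent controls $m_2,\dots,m_k$ only, and with those fixed the summand of \eqref{bressoudeven} tends to a nonzero limit as $m_1\to\infty$, so the series diverges (oscillates) for every $0<|q|<1$ in any order of summation; it has a value only as the regularized $\ell\to0^+$ limit, which is a caveat both you and the paper leave unstated.
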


\begin{proof}
Taking the limit $\ell\to 0$ in \eqref{pfevenmod}, it reduces to that
\[ \begin{aligned}
& \sum_{m_1 \geq m_2 \geq \cdots \geq m_k \geq 0} \frac{(-1)^{m_{1}} (-1;q)_{m_{1}}(q  ;q)_{m_1-1}  q^{m_2^2+m_3^2+\cdots+m_k^2-m_2-m_3-\cdots-m_i} }{(q)_{m_1-m_2}(q)_{m_2-m_3} \cdots(q)_{m_{k-1}-m_k}(q)_{m_{k}} (q )_{m_{k}-1}} \lim_{\ell \to 0}\Big(\frac{1-q^{\ell}}{1-q^{2\ell}}\Big) \notag \\[8pt]
&\qquad=\lim_{\ell \to 0}\Big(\frac{1-q^{\ell}}{1-q^{2\ell}}\Big)\left(1+\sum_{n = -\infty}^{\infty} (-1)^{n} q^{kn^{2}-in}  \right).
\end{aligned} \]
Since when $ m_{k}=0 $, the multiple sum on the left hand side vanishes unless $  m_{1}=\cdots=m_{k-1}=0 $, we are led to that
\[ \begin{aligned}
&\quad\  1+\sum_{m_1 \geq m_2 \geq \cdots \geq m_k \geq 1} \frac{(-1;q)_{m_{1}}(q  ;q)_{m_1-1}(-1)^{m_{1}}   q^{m_2^2+m_3^2+\cdots+m_k^2-m_2-m_3-\cdots-m_i} }{(q)_{m_1-m_2}(q)_{m_2-m_3} \cdots(q)_{m_{k-1}-m_k}(q)_{m_{k}} (q )_{m_{k}-1}} \notag \\[8pt]
&\qquad =1+\sum_{n = -\infty}^{\infty} (-1)^{n} q^{kn^{2}-in} .
\end{aligned} \]
Thus the proof can be  completed by further employing the Jacobi triple product identity \eqref{JTPI} and simplifying.
\end{proof}

Note that by substituting $i$ with $k-i$ in \eqref{bressoudeven}, it could be related to
Bressoud's identity with even basis \cite{bressoud1980analytic}, which is given as follows
\[\begin{aligned}
\sum_{m_1  \geq \cdots \geq m_{k-1} \geq 0} \frac{q^{m_1^2+m_2^2+\cdots+m_{k-1}^2+m_{i}+\cdots+m_{k-1}}}{(q)_{m_1-m_2}  \cdots(q)_{m_{k-2}-m_{k-1}}(q^{2};q^{2})_{m_{k-1}}}= \frac{\left(q^{i}, q^{2k-i}, q^{2k} ; q^{2k}\right)_{\infty}}{(q;q)_{\infty}}.
\end{aligned}\]

\section{The three classical  truncated theta series}\label{pr-Gauss-2}

In this section, we will study the truncated forms for Euler's pentagonal number theorem, Gauss'  theta series on triangular numbers and square numbers. As applications, the nonnegativity of the corresponding coefficients   can be directly derived  which  will lead to some  inequalities for certain partition functions.

It is straightforward to see that the truncated Euler's pentagonal number theorem as given in Theorem \ref{thm-1}  can be deduced by taking   $ k=1 $ and  $ i=0 $ in Theorem \ref{lem-thm-1}. First, we show  that our truncated form for Euler's pentagonal number theorem is equivalent to Andrews and Merca's result \eqref{trun-euler} by verifying the following identity.

\begin{thm}\label{equi-1}  For $\ell$ a  positive integer, we have
\[ \begin{aligned}
\sum_{n=\ell}^{\infty} \frac{q^{\binom{\ell}{2}+(\ell+1) n}}{(q ; q)_n}{n-1\brack \ell-1}=q^{\frac{3}{2}\ell ^{2}+\frac{1}{2}\ell} \sum_{n=0}^{\infty} \frac{ q^{(2\ell+1)n +n^{2}} (1-q^{\ell})}{(q;q)_{n} (q;q)_{n+2\ell} (1-q^{n+\ell})}.
\end{aligned} \]
\end{thm}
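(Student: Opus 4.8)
The plan is to prove the stated identity directly as a basic hypergeometric transformation, after stripping both sides down to a single $q$-hypergeometric series. Write $L(\ell)$ and $R(\ell)$ for the left- and right-hand sides. First I would simplify the Andrews--Merca side $L(\ell)$: since ${n-1\brack \ell-1}$ vanishes unless $n\ge \ell$, I substitute $n\to n+\ell$ and use ${n+\ell-1\brack \ell-1}=\frac{(q)_{n+\ell-1}}{(q)_{\ell-1}(q)_n}$ together with $\frac{(q)_{n+\ell-1}}{(q)_{n+\ell}}=\frac{1}{1-q^{n+\ell}}$ to collect the powers of $q$; this turns $L(\ell)$ into $\frac{q^{3\ell^2/2+\ell/2}}{(q)_{\ell-1}}\sum_{n\ge 0}\frac{q^{(\ell+1)n}}{(q)_n\,(1-q^{n+\ell})}$. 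On the other side, I rewrite the two extra factors in the summand of $R(\ell)$ using $(q)_{n+2\ell}=(q)_{2\ell}(q^{2\ell+1};q)_n$ and $\frac{1}{1-q^{n+\ell}}=\frac{(q^\ell;q)_n}{(1-q^\ell)(q^{\ell+1};q)_n}$. After cancelling the common factor $q^{3\ell^2/2+\ell/2}$ and using $(q)_{\ell-1}(1-q^\ell)=(q)_\ell$, the identity collapses to
\[
\frac{1}{(q)_\ell}\sum_{n\ge0}\frac{(q^\ell;q)_n}{(q)_n(q^{\ell+1};q)_n}q^{(\ell+1)n}
=\frac{1}{(q)_{2\ell}}\sum_{n\ge0}\frac{(q^\ell;q)_n}{(q)_n(q^{\ell+1};q)_n(q^{2\ell+1};q)_n}q^{n^2+(2\ell+1)n}.
\]

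The two sums are now recognizable: the left one is ${}_2\phi_1(q^\ell,0;q^{\ell+1};q,q^{\ell+1})$ and the right one is ${}_1\phi_2(q^\ell;q^{\ell+1},q^{2\ell+1};q,q^{2\ell+2})$, where the Gaussian factor $q^{n^2-n}$ is exactly the one carried by a ${}_1\phi_2$ series. The key step is then to invoke the $b\to0$ (confluent) limit of the standard ${}_2\phi_1\to{}_2\phi_2$ transformation \cite[(III.4)]{Basic}, namely
\[
{}_2\phi_1(a,0;c;q,z)=\frac{(az;q)_\infty}{(z;q)_\infty}\,{}_1\phi_2(a;c,az;q,cz),
\]
which lies in the same family as Heine's transformation (\ref{Heine-2}) and is obtained by letting the corresponding numerator parameter tend to $0$ while tracking the indeterminate product $(c/b;q)_n(bz)^n\to(-cz)^nq^{\binom n2}$. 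Specializing $a=q^\ell$ and $c=z=q^{\ell+1}$ (so that $az=q^{2\ell+1}$ and $cz=q^{2\ell+2}$) converts the left sum into $\frac{(q^{2\ell+1};q)_\infty}{(q^{\ell+1};q)_\infty}$ times the right sum.

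Finally I would match the prefactors: since $(q^{\ell+1};q)_\infty=(q)_\infty/(q)_\ell$ and $(q^{2\ell+1};q)_\infty=(q)_\infty/(q)_{2\ell}$, the ratio $\frac{(q^{2\ell+1};q)_\infty}{(q^{\ell+1};q)_\infty}$ equals $\frac{(q)_\ell}{(q)_{2\ell}}$; multiplying through by $\frac{1}{(q)_\ell}$ then yields exactly $\frac{1}{(q)_{2\ell}}$ times the right sum, which is the reduced identity, completing the proof.

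I expect the main obstacle to be the second step: identifying the correct transformation and executing the singular $b\to0$ limit carefully, because the factor $q^{n^2}$ appearing on the Theorem~\ref{thm-1} side only emerges from a limit in which a parameter blows up while the argument collapses. Once that transformation is in hand, the reduction of the first step and the prefactor bookkeeping of the last step are routine. As a sanity alternative, the statement also follows at once by comparing Theorem~\ref{thm-1} with (\ref{trun-euler}), since both have the identical left-hand side $\frac{1}{(q)_\infty}\sum_{j=-\ell}^{\ell-1}(-1)^jq^{j(3j+1)/2}$; but the direct $q$-series derivation above is preferable, as it establishes the equivalence intrinsically rather than relying on both theorems being independently known.
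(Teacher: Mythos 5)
Your proposal is correct: the reduction of both sides to the identity
\[
\frac{1}{(q)_\ell}\sum_{n\ge0}\frac{(q^\ell;q)_n}{(q)_n(q^{\ell+1};q)_n}q^{(\ell+1)n}
=\frac{1}{(q)_{2\ell}}\sum_{n\ge0}\frac{(q^\ell;q)_n}{(q)_n(q^{\ell+1};q)_n(q^{2\ell+1};q)_n}q^{n^2+(2\ell+1)n}
\]
is accurate, the confluent form of \cite[(III.4)]{Basic} you invoke is a genuine identity (the limit $(c/b;q)_n(bz)^n\to(-cz)^nq^{\binom n2}$ is handled correctly), and the specialization $a=q^\ell$, $c=z=q^{\ell+1}$ together with the prefactor bookkeeping closes the argument. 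The paper proves the same statement in the same general spirit (strip to basic hypergeometric series, apply standard transformations), but by a different chain: it works only on the right-hand side, first applying the $_3\phi_2$ transformation (\ref{3phi2-1}) with $a,c\to\infty$, $b=q^\ell$, $d=q^{\ell+1}$, $e=q^{2\ell+1}$ to convert the $q^{n^2}$-type sum into the intermediate series $\frac{1}{(q^{2\ell+1};q)_\infty}\sum_{n\ge0}(-1)^nq^{(2\ell+1)n+\binom n2}/(q^{\ell+1};q)_n$, then applying Heine's transformation (\ref{Heine-2}) with $b\to0$, $a=q^\ell$, $z=c=q^{\ell+1}$ to reach the $_2\phi_1$-type sum, and finally shifting $n\to n-\ell$ to recover the Andrews--Merca side. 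So where you use a single transformation not quoted in the paper ((III.4) in its $b\to0$ limit), the paper composes the two transformations it has already recorded in its Preliminaries; your route is more economical in transformation count, while the paper's route stays within its stated toolkit and passes through an intermediate Rogers--Fine-flavored series. Your closing observation that the theorem also follows by comparing Theorem \ref{thm-1} with (\ref{trun-euler}) is logically sound but, as you note, would defeat the purpose of the statement, which is precisely to certify that equivalence by a direct $q$-series derivation; the paper shares your preference.
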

\begin{proof} We see that the right hand side can be rewritten as follows
\begin{align}
&\quad \ q^{\frac{3}{2}\ell ^{2}+\frac{1}{2}\ell} \sum_{n=0}^{\infty} \frac{ q^{(2\ell+1)n +n^{2}} (1-q^{\ell})}{(q;q)_{n} (q;q)_{n+2\ell} (1-q^{n+\ell})}\nonumber\\[8pt]
&= \frac{q^{\frac{3}{2}\ell ^{2}+\frac{1}{2}\ell} }{(q;q)_{2\ell}}\sum_{n=0}^{\infty} \frac{ q^{(2\ell+1)n +n^{2}} (q^{\ell};q)_{n}}{(q;q)_{n} (q^{2\ell+1};q)_{n} (q^{\ell+1};q)_{n}}\nonumber\\[8pt]
&=\frac{q^{\frac{3}{2}\ell ^{2}+\frac{1}{2}\ell} }{(q;q)_{2\ell}} \frac{ 1}{(q^{2\ell+1};q)_{\infty}}\sum_{n=0}^{\infty} \frac{ (-1)^{n} q^{(2\ell+1)n +\frac{n^{2} -n}{2}} (q;q)_{n}}{(q;q)_{n}  (q^{\ell+1};q)_{n}}\nonumber \\[8pt]
&\quad(\text{by}\ (\ref{3phi2-1}) \text{ with } a,\ c\to \infty,\ b= q^{\ell},\ d= q^{\ell+1},\ e=q^{2\ell+1})\nonumber\\[8pt]
&=\frac{q^{\frac{3}{2}\ell ^{2}+\frac{1}{2}\ell} }{(q;q)_{\infty}}
(q^{\ell+1};q)_{\infty} \sum_{n=0}^{\infty} \frac{ q^{(\ell+1)n } (q^{\ell};q)_{n}}{(q;q)_{n}  (q^{\ell+1};q)_{n}}\notag \\[8pt]
&\quad(\text{ by}\  (\ref{Heine-2})\  \text{with} \ b\to 0,\ a= q^{\ell},\ z=c=q^{\ell+1})\nonumber\\[8pt]
&=\sum_{n=\ell}^{\infty} \frac{q^{\binom{\ell}{2}+(\ell+1) n}}{(q ; q)_n}{n-1\brack \ell-1}
\qquad(\text{  by shifting $ n $ to $ n-\ell $}).\nonumber
\end{align}
\end{proof}

Furthermore, from our result  \eqref{thm-1-eq} and by using the following expansion
\begin{equation}\label{formulag}
\frac{1-q^\ell}{1-q}=\sum_{i=0}^{\ell-1} q^i,
\end{equation}
it is direct to see that the inequality for partition functions (\ref{MK--cor-ineq}) holds.
More precisely, from \eqref{thm-1-eq} we derive that
\[ \begin{aligned}
&\frac{1}{(q; q)_{\infty}} \sum_{n=-\ell}^{\ell-1}(-1)^n q^{n(3 n+1) / 2}\\
&	=1+(-1)^{\ell-1}q^{\frac{3}{2}\ell ^{2} +\frac{1}{2}\ell}\sum_{n=0}^{\infty} \frac{ q^{(2\ell+1)n +n^{2}}(1-q^{\ell})}{(q;q)_{n} (q;q)_{n+2\ell} (1-q^{n+\ell})}
\\[8pt]
&	=1+(-1)^{\ell-1}q^{\frac{3}{2}\ell ^{2} +\frac{1}{2}\ell}\sum_{i=0}^{\ell-1} \sum_{n=0}^{\infty} \frac{ q^{(2\ell+1)n +n^{2}+i} }{(q;q)_{n} (q^{2};q)_{n+2\ell-1} (1-q^{n+\ell})} .
\end{aligned}\]
Obviously, the coefficients of $q^n$ in the sum of the last expression  are nonnegative and thus  (\ref{MK--cor-ineq}) holds.

Moreover, recall that a pair of sequences
$(\gamma_n)_{n\geq 0}$ and $(\delta_n)_{n\geq 0}$ is called
a conjugate Bailey pair relative to $a$ if
\begin{equation}\label{CBP}
\gamma_n=\sum_{r=n}^{\infty} \frac{\delta_r}{(q;q)_{r-n}(aq;q)_{r+n}}
\end{equation}
for all $n\geq 0$.  We note that the above Theorem \ref{equi-1} also can be derived by constructing the following conjugate Bailey pair.

\begin{lem}\label{ConBP} For $n\geq 0$,
\begin{align}
\gamma_n&=\frac{b^n(aq^{n+1}/b;q)_{\infty}}{(aq;q)_{\infty}}\,
\sum_{r=n}^{\infty}
\frac{1}{(q;q)_{r-n}(1-bq^r)}\Big(\frac{aq^{n+1}}{b}\Big)^r,\nonumber \\
\delta_n&=\frac{a^n q^{n(n+1)}}{1-bq^n},\label{newCBP-2}
\end{align}
form a conjugate Bailey pair.
\end{lem}

\begin{proof} 	To show that the sequences $\gamma_n$ and $\delta_n$ form a conjugate Bailey pair, it is equivalent to prove the following identity
\[
\sum_{r=n}^{\infty}
\frac{b^n (aq^{n+1}/b;q)_{\infty}}
{(q;q)_{r-n}(1-bq^r)}\Big(\frac{aq^{n+1}}{b}\Big)^r=
\sum_{r=n}^{\infty} \frac{a^r q^{r(r+1)}(aq^{r+n+1};q)_{\infty}}
{(q;q)_{r-n}(1-bq^r)}.
\]
By using the identity \cite[Equation (II.2)]{Basic}
\begin{align*}
(z;q)_{\infty}=\sum_{k=0}^{\infty}\frac{(-z)^k q^{\binom{k}{2}}}{(q;q)_k}
\end{align*}
to expand both infinite products on both sides of the above identity  and then extracting the coefficients of	$(-a)^k q^{\binom{k}{2}+(n+1)k}$, it turns to be that for all $k\geq n\geq 0$
\[
\sum_{r=n}^k \frac{(-1)^r b^{n-k} q^{\binom{r+1}{2}-rk}}
{(q;q)_{k-r}(q;q)_{r-n}(1-bq^r)}=
\sum_{r=n}^k \frac{(-1)^r q^{\binom{r+1}{2}-rn}}
{(q;q)_{k-r}(q;q)_{r-n}(1-bq^r)}.
\]
Further writing it as a single sum, shifting the summation index $r\mapsto r+n$
then replacing $(b,k)\mapsto (bq^{-n},k+n)$ and simplifying, the above identity is equivalent to
\[
\sum_{r=0}^k (-1)^r q^{\binom{r+1}{2}-rk}
\frac{1-(bq^r)^k}{1-bq^r} {k\brack r}=0.
\]
Denoting the summation on the left by $L_k(b,q)$, the coefficient of $ b^{i} $ by $ [b^{i}]L_k(b,q) $,
it follows from the formula \eqref{formulag} that if $0\leq i\leq k-1$
\[
[b^i] L_k(b,q)=
\sum_{r=0}^k (-1)^r q^{\binom{r}{2}-(k-i-1)r} {{k}\brack {r}}=0.
\]
By the $q$-binomial theorem \cite[Equation (II.4)]{Basic}
\[
(z;q)_n=\sum_{k=0}^n(-1)^k {{n}\brack{k}}q^{\binom{k}{2}}z^{k},
\]
we obtain
\[
\sum_{r=0}^k (-1)^r q^{\binom{r}{2}-(k-i-1)r}{{k}\brack{r}}
=(q^{1+i-k};q)_k=0
\]
for all $0\leq i\leq k-1$, which deduce that \eqref{newCBP-2} form a conjugate Bailey pair.
\end{proof}

We note that by substituting the conjugate Bailey pair  \eqref{newCBP-2} into \eqref{CBP} and letting $ a=b=1 $, it implies
\begin{align*}
\frac{1}{(q;q)_n}\sum_{r=n}^{\infty}
\frac{q^{(n+1)r}}{(q;q)_{r-n}(1-q^r)}
&=\sum_{r=n}^{\infty}
\frac{q^{r(r+1)}}{(q;q)_{r-n}(q;q)_{r+n}(1-q^r)} \\
&=\sum_{r=0}^{\infty}
\frac{q^{(r+n)(r+n+1)}}{(q;q)_r(q;q)_{r+2n}(1-q^{r+n})},
\end{align*}
which is equivalent to Theorem \ref{equi-1} by taking the substitution $(r,n)\mapsto (n,\ell)$.

Next, we prove the  truncated Gauss   theta series on triangular numbers as given in Theorem \ref{thm-2}.

{\noindent \it Proof of Theorem \ref{thm-2}}.
By inserting the Bailey pair (\ref{Bailey-pair-1}) into (\ref{pf-12}) and setting $ \rho =-a^{\frac{1}{2}}$, we are led to
\[ \begin{aligned}
& \sum_{m_1 \geq m_2 \geq \cdots \geq m_k \geq 0} \frac{(-a^{\frac{1}{2}})_{m_1} a^{\frac{m_1}{2}+m_2+\cdots+m_k} q^{\frac{m_1\left(m_1-1\right)}{2}+m_2^2+m_3^2+\cdots+m_k^2-m_2-m_3-\cdots-m_i}   (a^{\frac{1}{2}} q^{-\frac{1}{2}})_{m_k}}{(q)_{m_1-m_2}(q)_{m_2-m_3} \cdots(q)_{m_{k-1}-m_k} (a,q,a^{\frac{1}{2}} q^{\frac{1}{2}})_{m_{k}}   } \notag \\[8pt]
&=  \frac{( -a ^{\frac{1}{2}})_{\infty}}{(a )_{\infty}}
\Big(\sum_{n \geq 0}   (-1)^n  a^{k n} q^{ k n^2-\left(\frac{1}{2}+i\right) n} + \sum_{n \geq 1} (-1)^n  a^{k n+i-k+\frac{1}{2}} q^{kn^2+\left(\frac{1}{2}+i-2 k\right) n+k-i-\frac{1}{2}}\Big) .
\end{aligned} \]
Replacing $ a $ by $ q^{2\ell +1} $, it leads to that
\[ \begin{aligned}
& \sum_{m_1 \geq m_2 \geq \cdots \geq m_k \geq 0} \frac{(-q^{\ell +\frac{1}{2}})_{m_1} (q^{2\ell +1})^{\frac{m_1}{2}+m_2+\cdots+m_k} q^{\frac{m_1\left(m_1-1\right)}{2}+m_2^2+m_3^2+\cdots+m_k^2-m_2-m_3-\cdots-m_i} (q^{\ell})_{m_k}}{(q)_{m_1-m_2}(q)_{m_2-m_3} \cdots(q)_{m_{k-1}-m_k}  (q^{2\ell+1},q,q^{\ell+1} )_{m_{k}} } \notag \\[8pt]
&=  \frac{(-q^{\ell+\frac{1}{2}})_{\infty}}{(q^{2\ell+1} )_{\infty}}
\sum_{n \geq 0}   (-1)^n  (q^{2\ell+1})^{k n} q^{ k n^2-\left(\frac{1}{2}+i\right) n} \\[8pt]
&\quad+\frac{(-q^{\ell+\frac{1}{2}})_{\infty}}{(q^{2\ell+1} )_{\infty}}  \sum_{n \geq 1} (-1)^n  (q^{2\ell+1})^{k n+i-k+\frac{1}{2}} q^{kn^2+\left(\frac{1}{2}+i-2 k\right) n+k-i-\frac{1}{2}} \\[8pt]
&=(-1){^{\ell}}q^{-k\ell ^{2} -(k-i-\frac{1}{2}) \ell }\frac{(-q^{\frac{1}{2}+\ell})_{\infty}}{(q^{2\ell+1})_{\infty}}
\Big(\sum_{n \leq -\ell} +  \sum_{n \geq \ell+1} \Big)  (-1)^{n}q^{ k n^2+\left(\frac{1}{2}+i-k\right) n}.
\end{aligned} \]
Then substituting $ k=1 $, $ i=1 $ and replacing $ q $ by $ q^{2} $, we obtain that
\begin{align}\label{pf-2-2}
& q^{2\ell^{2} -\ell }\frac{(q^{2};q^{2})_{\infty}}{(-q;q^{2})_{\infty}}\sum_{n=0}^{\infty} \frac{ (-q;q^{2})_{n+\ell} q^{(2\ell+ 1)n +{n(n-1)} }  (1-q^{2\ell})  }{(q^{2};q^{2})_{n} (q^{2};q^{2})_{n+2\ell } (1-q^{2n+2\ell})}\notag \\[8pt]
& =(-1)^{\ell} \sum_{n \leq -\ell}(-1)^n q^{  2n^2+ n} + (-1)^{\ell} \sum_{n \geq \ell+1} (-1)^n  q^{  2n^2+ n}.
\end{align}
Therefore,
\[ \begin{aligned}
&\frac{\left(-q ; q^2\right)_{\infty}}{\left(q^2 ; q^2\right)_{\infty}} \sum_{n=-\ell +1}^{\ell}(-1)^n q^{2 n^2+n}\\[8pt]
&=1+(-1)^{\ell -1} q^{2\ell^{2} -\ell }  \sum_{n=0}^{\infty}\frac{ (-q;q^{2})_{n+\ell} q^{(2\ell+ 1)n +{n(n-1)} }  (1-q^{2\ell})  }{(q^{2};q^{2})_{n} (q^{2};q^{2})_{n+2\ell } (1-q^{2n+2\ell})},
\end{aligned} \]
which completes the proof of Theorem \ref{thm-2}. \qed

Further expanding the right hand side of the above equation, we have  \[ \begin{aligned}
&\frac{\left(-q ; q^2\right)_{\infty}}{\left(q^2 ; q^2\right)_{\infty}} \sum_{n=-\ell +1}^{\ell}(-1)^n q^{2 n^2+n}\\[8pt]
&\quad =1+(-1)^{\ell -1} q^{2\ell^{2} -\ell } \sum_{i=0}^{\ell-1} \sum_{n=0}^{\infty}\frac{ (-q;q^{2})_{n+\ell} q^{(2\ell+1)n +{n(n-1)} +2i}   }{(q^{2};q^{2})_{n} (q^{4};q^{2})_{n+2\ell-1 } (1-q^{2n+2\ell})}.
\end{aligned} \]
Obviously, the coefficients of $q^n$ for $n\geq 0$  in the sum on the right hand side are nonnegative. Thus by taking the coefficients of $q^n$ on both hand sides and combining with the generating function for $\pod(n)$ \eqref{gf-pod}, we obtain the following result.

\begin{cor}\label{thm-2-cor}
For $n, \ell \geq 1$, we have
\begin{align}
(-1)^{\ell-1} \sum_{j=-\ell+1 }^{\ell}(-1)^j{\pod}\big(n-j(2 j+1)\big)\geq 0
\end{align}
with strict inequality if $n \geq(2 \ell+1) \ell$.
\end{cor}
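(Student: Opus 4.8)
The plan is to extract the coefficient of $q^n$ from both sides of Theorem~\ref{thm-2} and match it against the generating function \eqref{gf-pod}. First I would observe that, writing $2j^2+j=j(2j+1)$ and using $\frac{(-q;q^2)_\infty}{(q^2;q^2)_\infty}=\sum_{m\ge0}\pod(m)q^m$, the coefficient of $q^n$ on the left-hand side of Theorem~\ref{thm-2} is precisely $\sum_{j=-\ell+1}^{\ell}(-1)^j\pod\!\big(n-j(2j+1)\big)$. On the right-hand side the leading $1$ contributes only to $q^0$, so for $n\ge1$ that coefficient equals $(-1)^{\ell-1}$ times the coefficient of $q^n$ in the displayed series. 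Multiplying through by $(-1)^{\ell-1}$ would then give
\[
(-1)^{\ell-1}\sum_{j=-\ell+1}^{\ell}(-1)^j\pod\!\big(n-j(2j+1)\big)=\big[q^n\big]\big(\text{series}\big),
\]
so everything reduces to controlling the sign of $\big[q^n\big]$ of that series.

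For the nonnegativity I would simply invoke the manifestly nonnegative expansion recorded just before the statement: after rewriting $\frac{1-q^{2\ell}}{(q^2;q^2)_{n+2\ell}}=\frac{\sum_{i=0}^{\ell-1}q^{2i}}{(q^4;q^2)_{n+2\ell-1}}$, every factor of every summand has nonnegative $q$-coefficients, whence the whole series has nonnegative coefficients and the claimed inequality follows at once.

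The substantive part will be the strict inequality for $n\ge(2\ell+1)\ell$, which I would establish by isolating the single $n=0$ summand of the series. At $n=0$ it collapses to $q^{2\ell^2-\ell}\,\frac{(-q;q^2)_\ell}{(q^2;q^2)_{2\ell}}$, and using $\frac{1+q}{1-q^2}=\frac{1}{1-q}$ this factors as
\[
q^{2\ell^2-\ell}\,\frac{1}{1-q}\cdot\frac{\prod_{j=2}^{\ell}(1+q^{2j-1})}{\prod_{j=2}^{2\ell}(1-q^{2j})}.
\]
The second fraction has nonnegative coefficients and constant term $1$, so multiplying by $\frac{1}{1-q}$ forces the coefficient of every $q^N$ to be at least $1$; hence the $n=0$ term alone contributes a strictly positive coefficient to $q^N$ for all $N\ge 2\ell^2-\ell$. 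Since the remaining summands ($n\ge1$) have nonnegative coefficients, $\big[q^n\big](\text{series})>0$ for every $n\ge 2\ell^2-\ell$, and \emph{a fortiori} for $n\ge(2\ell+1)\ell=2\ell^2+\ell$, which yields the strict inequality.

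The only genuine obstacle is this last step: the nonnegativity is essentially forced by the expansion, but strictness requires exhibiting an explicit subseries whose coefficients are eventually all positive. Pinning this on the $n=0$ term together with the elementary $\frac{1}{1-q}$-positivity trick is the cleanest route, and the one point I would check is the edge case $\ell=1$, where the empty product $\prod_{j=2}^{1}$ is read as $1$ and the argument still goes through.
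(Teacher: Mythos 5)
Your proof is correct, and the nonnegativity part is exactly the paper's argument: extract the coefficient of $q^n$ from Theorem \ref{thm-2} using \eqref{gf-pod}, and rewrite $\frac{1-q^{2\ell}}{(q^2;q^2)_{n+2\ell}}=\frac{\sum_{i=0}^{\ell-1}q^{2i}}{(q^4;q^2)_{n+2\ell-1}}$ so that every summand is manifestly nonnegative. Where you go beyond the paper is the strict inequality: the paper only records the nonnegative expansion and then asserts the corollary, leaving strictness implicit, whereas you isolate the $n=0$ summand $q^{2\ell^2-\ell}\,(-q;q^2)_\ell/(q^2;q^2)_{2\ell}$ and use $\frac{1+q}{1-q^2}=\frac{1}{1-q}$ to force every coefficient of that single term to be at least $1$ from exponent $2\ell^2-\ell$ onward. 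That argument is valid (including the $\ell=1$ edge case with the empty product), and it in fact establishes strictness for all $n\geq\ell(2\ell-1)$, which is stronger than the stated threshold $n\geq\ell(2\ell+1)$; a quick check at $\ell=2$, $n=6$ confirms the improvement is genuine, since $\pod(5)-\pod(6)+\pod(3)-\pod(-4)=4-5+2-0=1>0$ while the corollary as stated only promises strictness from $n=10$. So your proposal both reproduces the paper's proof and supplies a clean explicit justification for the one claim the paper does not spell out.
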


Then we give the proof of the truncated Gauss'  theta series on square numbers.

\noindent{\it Proof of Theorem  \ref{thm-3}.}
Taking $ i=0 $ in Lemma  \ref{bailey-lattice}, we have that
\begin{align}\label{pf-thm-3}
& \sum_{n \geq m_1 \geq m_2 \geq \ldots>m_k \geq 0} \frac{\left(\rho_1\right)_{m_1} \ldots\left(\rho_k\right)_{m_k}\left(\sigma_1\right)_{m_2} \ldots\left(\sigma_k\right)_{m _{k}}}{(q)_{n-m_1} \ldots(q)_{m _{k-1}-m _{k}}}  \notag\\[8pt]
& \times \frac{\left(a q / \rho_{1} \sigma_{1}\right)_{n-m_{1}} \ldots\left(a q / \rho_k \sigma_k\right)_{m _{k-1}-m_k} a^{m_1+\ldots+m_k} q^{m_{1}+\ldots+m_k} \beta_{m _{k}}}{\left(a q / \rho_{1}\right)_{n} \ldots\left(a q / \rho_k\right)_{m _{k-1}}\left(a q / \sigma_{1}\right)_{n} \ldots\left(a q / \sigma_k\right)_{m _{k-1}}\left(\rho_1 \sigma_1\right)^{m_1} \ldots\left(\rho_k \sigma_k\right)^{m_k}} \notag\\[8pt]
& =\frac{\alpha_0}{(q)_n(a)_n}+\sum_{t=1}^n \frac{ (1-a)}{(q)_{n-t}(a)_{n+t}}  \notag\\[8pt]
&
\times\bigg(\frac{\left(\rho_{1}\right)_t\left(\sigma_{1}\right)_t \ldots\left(\rho_k\right)_t\left(\sigma_k\right)_t(a q)^{k t} \alpha_t}{\left(a q / \rho_{1}\right)_t\left(a q / \sigma_{1}\right)_t \ldots\left(a q / \rho_k\right)_t\left(a q / \sigma_k\right)_t\left(\rho_{1} \cdots \rho_k \sigma_{1} \cdots \sigma_k\right)^t\left(1-a q^{2 t}\right)} \notag\\[8pt]
&-\frac{\left(\rho_{1}\right)_{t-1}\left(\sigma_{1}\right)_{t-1} \cdots\left(\rho_k\right)_{t-1}\left(\sigma_k\right)_{t-1}(a q)^{k(t-1)} a q^{2 t-2} \alpha_{t-1}}{\left(a q / \rho_{1}\right)_{t-1}\left(a q / \sigma_{1}\right)_{t-1} \cdots\left(a q / \rho_k\right)_{t-1}\left(a q / \sigma_k\right)_{t-1}\left(\rho_{1} \sigma_{1}\cdots \rho_k  \sigma_k\right)^{t-1}\left(1-a q^{2 t-2}\right)} \bigg).
\end{align}
Taking $n\to \infty$, $\rho_{s} \to \infty$ for $s=2,3,\ldots, k$,  $\sigma_{s}\to \infty$ for $ s=1,2,\ldots k$, and then letting $\rho_{1} \mapsto \rho$ in (\ref{pf-thm-3}), it becomes
\begin{align}\label{pf-3-1}
& \sum_{m_1 \geq m_2 \geq \cdots \geq m_k \geq 0} \frac{(\rho)_{m_1}(-\rho)^{-m_1}  a^{{m_1}+m_2+\cdots+m_k} q^{\frac{m_1^{2} +m_{1}}{2}+m_2^2+m_3^2+\cdots+m_k^2} \beta_{m_k}}{(q)_{m_1-m_2}(q)_{m_2-m_3} \cdots(q)_{m_{k-1}-m_k}} \notag \\[8pt]
& =  \frac{(aq/\rho)_{\infty}}{(a q)_{\infty}} \sum_{n \geq 0} \frac{(\rho)_{n}(-\rho)^{-n}  a^{k n} q^{\left(k-\frac{1}{2}\right) n^2+\frac{1}{2} n} \alpha_n}{ (aq/\rho)_{n}\left(1-a q^{2 n}\right)} \notag\\[8pt]
& \quad -\frac{(aq / \rho)_{\infty}}{(a q)_{\infty}} \sum_{n \geq 1} \frac{ (\rho)_{n-1}(-\rho)^{-n+1}   a^{kn-k+1} q^{\left(k-\frac{1}{2}\right) n^2+\left(\frac{7}{2}-2 k\right) n+k-3} \alpha_{n-1}}{(aq/\rho)_{n-1}\left(1-a q^{2 n-2}\right)}.
\end{align}
Inserting the Bailey pair (\ref{Bailey-pair-1}) into (\ref{pf-3-1}) and substituting $ \rho =-(aq)^{\frac{1}{2}} $ yields that
\[ \begin{aligned}
& \sum_{m_1 \geq m_2 \geq \cdots \geq m_k \geq 0} \frac{(-a^{\frac{1}{2}}q^{\frac{1}{2}})_{m_1} a^{\frac{m_1}{2}+m_2+\cdots+m_k} q^{\frac{m_1^2}{2}+m_2^2+m_3^2+\cdots+m_k^2} (1-a^{\frac{1}{2}}q^{-\frac{1}{2}})}{(q)_{m_1-m_2}(q)_{m_2-m_3} \cdots(q)_{m_{k-1}-m_k}  (q)_{m_{k}} (a)_{m_{k}} (1-a^{\frac{1}{2}}q^{-\frac{1}{2}+m_{k}}) } \notag \\[8pt]
&=  \frac{(-a^{\frac{1}{2}}q^{\frac{1}{2}})_{\infty}}{(a )_{\infty}} \sum_{n \geq 0} (-1)^{n}a^{kn} q^{kn^{2} -n}  +\frac{(-a^{\frac{1}{2}}q^{\frac{1}{2}})_{\infty}}{(a )_{\infty}} \sum_{n \geq 1} (-1)^{n}a^{kn-k+1} q^{kn^{2} +(1-2k) n+k-1  }.
\end{aligned} \]
Setting $ a=q^{2\ell+1} $ and $ k=1 $, then we have
\begin{align}\label{squ-pf}
&\sum_{n=0}^{\infty } \frac{(-q)_{n+\ell}  q^{(\ell+1)n+\frac{n(n-1)}{2}}  (1-q^{\ell}) }{(q)_{n} (q)_{n+2\ell} (1-q^{n+\ell})}\notag\\[8pt]
&=\frac{(-q;q)_{\infty}}{(q;q)_{\infty}} \Big(\sum_{n\geq 0}(-1)^{n} q^{n^{2} +2\ell n} +\sum_{n\geq 1}(-1)^{n} q^{n^{2} +2\ell n} \Big) \notag\\[8pt]
&=\frac{(-q;q) _{\infty}}{(q;q)_{\infty}}(-1)^{\ell} q^{-\ell^{2}} \Big(\sum_{n\leq -\ell }(-1)^{n} q^{n^{2} } +\sum_{n\geq \ell+1}(-1)^{n} q^{n^{2}}\Big).
\end{align}
By applying the Jacobi triple product identity \eqref{JTPI}, it implies that
\[ \begin{aligned}
&\frac{(-q ; q)_{\infty}}{(q ; q)_{\infty}} \sum_{n=-\ell+1 }^{\ell}(-1)^n q^{n^2}\\[8pt]
&=1+(-1)^{\ell-1} q^{\ell^{2}}\frac{(-q ; q)_{\infty}}{(q ; q)_{\infty}}\frac{(q ; q)_{\infty}}{(-q ; q)_{\infty}}  \sum_{n=0}^{\infty } \frac{(-q)_{n+\ell}  q^{(\ell+1)n+\frac{n(n-1)}{2}} (1-q^{\ell})  }{(q)_{n} (q)_{n+2\ell} (1-q^{n+\ell})},
\end{aligned} \]
which completes the proof of  Theorem \ref{thm-3}. \qed

Moreover, by noting that
\[ \begin{aligned}
&\frac{(-q ; q)_{\infty}}{(q ; q)_{\infty}} \sum_{n=-\ell+1 }^{\ell}(-1)^n q^{n^2}\\[8pt]
&=1+(-1)^{\ell-1} q^{\ell^{2}}\sum_{i=0}^{\ell-1}\sum_{n=0}^{\infty } \frac{(-q)_{n+\ell}  q^{(\ell+1)n+\frac{n(n-1)}{2}+i}   }{(q)_{n} (q^{2})_{n+2\ell-1} (1-q^{n+\ell})},
\end{aligned} \]
then taking the coefficients of $q^n$ on both hand sides and   combining the generating function for overpartitions \eqref{gf-over}, we obtain the following result.

\begin{cor}\label{thm-3-cor}
For $n, \ell \geq 1$, we have
\begin{align}
(-1)^{\ell-1} \sum_{j=-\ell+1}^\ell(-1)^j \overline{p}\left(n-j^2\right) \geq 0,
\end{align}
with strict inequality if $n \geq\ell^{2}$.
\end{cor}

At last in this section, we provide another truncated form for Gauss' theta series on square numbers (\ref{Gauss-2}).

\begin{thm}\label{equi-gauss-2} We have
\[ \frac{(-q ; q)_{\infty}}{(q ; q)_{\infty}} \sum_{n=-\ell+1 }^{\ell}(-1)^n q^{n^2}= 1+(-1)^{\ell-1} q^{{\ell}^{2}} \frac{(-q)_{\ell}}{(q)_{\ell-1}} \sum_{n=0}^{\infty } \frac{(-q^{n+\ell+1})_{\infty}}{( q^{n+\ell+1})_{\infty}}(q^{\ell})^{n}.\]
\end{thm}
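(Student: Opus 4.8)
The plan is to prove Theorem~\ref{equi-gauss-2} by transforming the series appearing in the already-established Theorem~\ref{thm-3}. Since Theorem~\ref{thm-3} gives
\[
\frac{(-q)_{\infty}}{(q)_{\infty}} \sum_{n=-\ell+1}^{\ell}(-1)^n q^{n^2}= 1+(-1)^{\ell-1}\sum_{n=0}^{\infty} \frac{(-q)_{n+\ell}\, q^{\ell n+\frac{n(n+1)}{2}+\ell^{2}}\,(1-q^{\ell})}{(q)_{n}(q)_{n+2\ell}(1-q^{n+\ell})},
\]
it suffices to show that the sum on the right equals $q^{\ell^2}\frac{(-q)_\ell}{(q)_{\ell-1}}\sum_{n=0}^\infty \frac{(-q^{n+\ell+1})_\infty}{(q^{n+\ell+1})_\infty}(q^\ell)^n$. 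The first step is to factor out $q^{\ell^2}$ and rewrite the summand of the Theorem~\ref{thm-3} series in hypergeometric-ready form: using $(-q)_{n+\ell}=(-q)_\ell(-q^{\ell+1})_n$, $(q)_{n+2\ell}=(q)_{2\ell}(q^{2\ell+1})_n$, and $\frac{1-q^\ell}{1-q^{n+\ell}}=\frac{(q^\ell)_n}{(q^{\ell+1})_n}$ (so that the factor $1-q^\ell$ is absorbed), I would collect the $q$-Pochhammer symbols to express the sum, up to the prefactor $q^{\ell^2}\frac{(-q)_\ell}{(q)_{2\ell}}$, as a balanced-looking ${}_3\phi_2$-type series in the variable $q^{\ell+1}$.

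The key technical step is then a transformation of this ${}_3\phi_2$ (or its limiting ${}_2\phi_1$/Rogers--Fine form) that converts the ratio of finite Pochhammer symbols $\frac{(-q^{\ell+1})_n}{(q^{2\ell+1})_n}$ into the infinite product $\frac{(-q^{n+\ell+1})_\infty}{(q^{n+\ell+1})_\infty}$ appearing on the target side. The natural tool here is the Heine transformation~\eqref{Heine-2}, or more likely the ${}_3\phi_2$ transformations~\eqref{3phi2-1}--\eqref{3phi2-2}, exactly as was done in the proof of Theorem~\ref{equi-1}: there, specializations $a,c\to\infty$ together with $b=q^\ell$, $d=q^{\ell+1}$, $e=q^{2\ell+1}$ turned a finite-product summand into an infinite-product one. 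For the present identity I would choose the parameters so that the denominator $(q^{2\ell+1})_n$ is promoted to $(q^{n+\ell+1})_\infty$ in the denominator and the numerator $(-q^{\ell+1})_n$ becomes $(-q^{n+\ell+1})_\infty$ in the numerator, while the surviving power of the summation variable collapses to $(q^\ell)^n$. The simultaneous appearance of $n$ in \emph{both} the subscript and the base-shift of these infinite products is the signature of a Rogers--Fine--type~\eqref{R-R-id} reindexing, so I would keep that identity in reserve should the Heine/${}_3\phi_2$ route leave an awkward residual factor.

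The main obstacle I anticipate is bookkeeping with the quadratic exponent and the extra factor $\frac{1-q^\ell}{1-q^{n+\ell}}$, which is not of standard hypergeometric shape and must be handled by the $\frac{(q^\ell)_n}{(q^{\ell+1})_n}$ rewriting before any transformation can be applied; getting the powers of $q$ to match (the $q^{\ell n+n(n+1)/2}$ against whatever the transformation produces for the $(q^\ell)^n$ series) is where sign and shift errors are most likely to creep in. A clean way to close the argument, and the final step of the plan, is to verify that after the transformation the prefactor reduces correctly: $q^{\ell^2}\frac{(-q)_\ell}{(q)_{2\ell}}\cdot(\text{product from the transformation})$ should simplify to $q^{\ell^2}\frac{(-q)_\ell}{(q)_{\ell-1}}$, using $(q)_{2\ell}=(q)_{\ell-1}(q^\ell)_{\ell+1}$ and cancelling the $(q^\ell)_{\ell+1}$-type factor against a piece emitted by the Heine/${}_3\phi_2$ step, thereby matching the right-hand side of Theorem~\ref{equi-gauss-2} term by term.
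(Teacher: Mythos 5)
Your plan is correct and follows essentially the same route as the paper's own proof: starting from Theorem~\ref{thm-3}, performing exactly the rewrites $(-q)_{n+\ell}=(-q)_\ell(-q^{\ell+1})_n$, $(q)_{n+2\ell}=(q)_{2\ell}(q^{2\ell+1})_n$, $\tfrac{1-q^\ell}{1-q^{n+\ell}}=\tfrac{(q^\ell)_n}{(q^{\ell+1})_n}$, then applying \eqref{3phi2-2} (with $c\to\infty$, $a=q^\ell$, $b=-q^{\ell+1}$, $d=q^{\ell+1}$, $e=q^{2\ell+1}$) followed by Heine's transformation \eqref{Heine-2} (with $a=-q^\ell$, $b=-1$, $z=c=-q^{\ell+1}$), and closing with the prefactor simplification $(q)_{2\ell}=(q)_{\ell-1}(q^\ell;q)_{\ell+1}$. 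The only detail you leave open is this explicit parameter choice (and note that the final passage to $\tfrac{(-q^{n+\ell+1})_\infty}{(q^{n+\ell+1})_\infty}$ is just the elementary identity $(a)_n=(a)_\infty/(aq^n)_\infty$ rather than a Rogers--Fine step), but the argument as outlined goes through exactly as in the paper.
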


\begin{proof} By applying Theorem   \ref{thm-3}, we see that
\begin{align}
&\quad \sum_{n=0}^{\infty } \frac{(-q)_{n+\ell}  q^{(\ell+1)n+\frac{n(n-1)}{2}+\ell^{2}}  (1-q^{\ell}) }{(q)_{n} (q)_{n+2\ell} (1-q^{n+\ell})} \notag  \\[8pt]
&=q^{\ell^{2}}\frac{(-q)_{\ell}}{(q)_{2\ell}} \sum_{n=0}^{\infty } \frac{(-q^{\ell+1})_{n}  q^{(\ell+1)n+\frac{n(n-1)}{2}+\ell^{2}}  (q^{\ell})_{n}}{(q)_{n} (q^{2\ell+1})_{n} (q^{\ell+1})_{n}}  \notag  \\[8pt]
&=q^{\ell^{2}}\frac{(-q)_{\ell}}{(q)_{2\ell}} \frac{(-q^{\ell+1})_{\infty} (-q^{\ell+1})_{\infty}}{(q^{\ell+1})_{\infty}(q^{2\ell+1})_{\infty}} \sum_{n=0}^{\infty } \frac{(-q^{\ell})_{n} (-1)_{n}}{(-q^{\ell+1})_{n} (q)_{n}} (-q^{\ell+1})^n  \notag  \\[8pt]
&\quad (\text{  by }(\ref{3phi2-2}) \text{ with }  c\to \infty,\ a=q^{\ell},\ b= -q^{\ell+1},\ d= q^{\ell+1},\ e=q^{2\ell+1})\notag \\[8pt]
&=q^{\ell^{2}}\frac{(-q)_{\ell}}{(q)_{2\ell}} \frac{(-q^{\ell+1})_{\infty} (-q^{\ell+1})_{\infty}}{(q^{\ell+1})_{\infty}(q^{2\ell+1})_{\infty}} \frac{(q^{\ell})_{\infty}}{(-q^{\ell+1})_{\infty}} \sum_{n=0}^{\infty } \frac{(q^{\ell+1})_{n}}{(-q^{\ell+1})_{n}}(q^{\ell})^{n}  \notag \\[8pt]
&\quad (\text{  by }\ (\ref{Heine-2}) \ \text{ with }\   a= -q^{\ell},\ b=-1,\ z=c=-q^{\ell+1})  \notag  \\[8pt]
&=q^{\ell^{2}}\frac{(-q)_{\ell}}{(q)_{\ell-1}} \sum_{n=0}^{\infty } \frac{(-q^{n+\ell+1})_{\infty}}{({ q^{n+\ell+1}})_{\infty}}(q^{\ell})^{n}.\notag
\end{align}
\end{proof}

Based on the above result, we note that  Theorem \ref{thm-3} also can be derived with the help of Rogers--Fine identity \eqref{R-R-id} by letting $ \beta\mapsto -\alpha $, $ \tau \mapsto \alpha/q $ and $ \alpha\mapsto -q^{\ell+1} $.

\section{Ballantine and Merca's conjectures}\label{BM-con}
In a recent work \cite{ballantine20236}, Ballantine and Merca considered 6-regular partitions  and some related truncated forms. Together with Euler's pentagonal number theorem  (\ref{Euler}), they found that
\[(-1)^k\Big(b_6(n)-\sum_{j=-k+1}^k (-1)^j p\big(n-3 j(3 j-1)\big)\Big)=\sum_{j=0}^{\lfloor n / 6\rfloor} b_6(n-6 j) M_{k}(j) ,  \]
where $ M_{k}(j) $ is the same as defined by (\ref{trun-euler}).
Furthermore, they derived a   recurrence relation for $  b_6(n) $ as follows
\begin{align}\label{Merca recurrrence}
\sum_{j=-\infty}^{\infty}(-1)^j b_6(n-j(3 j-1) / 2)= \begin{cases}(-1)^{k}, & \text { if } n=3k(3k-1), k \in \mathbb{Z}, \\ 0, & \text { otherwise. }\end{cases}
\end{align}
Inspired by (\ref{Euler}) and (\ref{Merca recurrrence}), they considered the following series
\begin{equation}\label{BMConj}
(-1)^k \left((q^6 ; q^6)_{\infty}-\frac{\left(q^6 ; q^6\right)_{\infty}}{(q ; q)_{\infty}} \sum_{n=-k+1}^k(-1)^n q^{n(3 n-1) / 2} \right)\end{equation}
and conjectured that the series has nonnegative coefficients, which is equivalent to Conjecture \ref{B-M-conj-1}.  In this section, we shall prove this conjecture in a more general case.

Recall that  $p_{5}(n)$ denotes the number of partitions of $n$ only with parts $1$, $2$, $4$ or $5$ and $p_5(n)=0$ if $n<0$. From OEIS  \cite[A029007]{OEIS}, we know that
\[ p_{5}(n)= \left\lfloor\frac{2 n^3+36 n^2+193 n+525}{480}+\frac{(-1)^n(n+1)}{32}\right\rfloor, \]
where $\lfloor a \rfloor$ represents the largest integer not greater than $a$.
Then it is easy to see that
\begin{equation}\label{boundsp5}
p^{d}_{5}(n)< p_{5}(n)\leq p^{u}_{5}(n),
\end{equation}
where
\[
\begin{aligned}
p^{d}_{5}(n)&=\frac{2 n^3+36 n^2+193 n+525}{480}-\frac{n+1}{32}-1,\\
p^{u}_{5}(n)&=\frac{2 n^3+36 n^2+193 n+525}{480}+\frac{n+1}{32}.
\end{aligned} \]
To prove  Conjecture \ref{B-M-conj-1} and the more general results, we mainly rely on the following theorem.

\begin{thm}\label{coe-p5}
For $n\geq 0$, $k \geq 1$, the coefficient  of $q^n$ in the series
\begin{equation}\label{p5series}
\frac{1}{(1-q)(1-q^2)(1-q^4)(1-q^{5})} \sum_{j=0}^{\infty}(-1)^j q^{\left(3 j^2+6 jk+j\right) / 2}\left(1-q^{2 j+2k+1}\right)
\end{equation}
is nonnegative.
\end{thm}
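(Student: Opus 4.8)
The plan is to read off the coefficient of $q^n$ as an explicit finite alternating sum of values of the partition function $p_5$, and then to bound it below using the closed cubic formula for $p_5$ together with the two-sided estimates $p_5^{d}(n)<p_5(n)\le p_5^{u}(n)$ recorded above. Write $D(q)=(1-q)(1-q^2)(1-q^4)(1-q^5)=\sum_{m\ge0}p_5(m)q^m$ and $S_k(q)=\sum_{j\ge0}(-1)^j q^{(3j^2+6jk+j)/2}\bigl(1-q^{2j+2k+1}\bigr)$, and set $a_j=(3j^2+(6k+1)j)/2$, $b_j=a_j+(2j+2k+1)$. Then the exponents interlace, $a_0<b_0<a_1<b_1<\cdots$, since $a_{j+1}-b_j=j+k+1>0$, and because $p_5(m)=0$ for $m<0$ the quantity to be shown nonnegative is the genuinely finite sum
\[
c_k(n):=[q^n]\frac{S_k(q)}{D(q)}=\sum_{j\ge0}(-1)^j\bigl(p_5(n-a_j)-p_5(n-b_j)\bigr).
\]

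First I would dispose of the easy range. When $n<b_0=2k+1$ only the $j=0$ term survives (as $b_0>n$ and $a_1=3k+2>n$), so $c_k(n)=p_5(n)\ge0$. For the main range I would replace each $p_5(n-a_j)$ and $p_5(n-b_j)$ by the smooth cubic part $\phi(m)=(2m^3+36m^2+193m+525)/480$ of its closed form, recording the oscillating and floor discrepancy through the bounds $p_5^{d},p_5^{u}$, whose spread $p_5^{u}(m)-p_5^{d}(m)=(m+1)/16+1$ is only linear in $m$. It is exactly for this reason that the reduced statement is stated with the truncated denominator $D(q)$ rather than the full modulus product $1/(q,q^2;q^3)_\infty$: with the latter no such polynomial formula for the coefficients is available, so Theorem~\ref{coe-p5} is the minimal tractable form.

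The hard part will be controlling the alternating sum $\sum_{j}(-1)^j(\phi(n-a_j)-\phi(n-b_j))$ \emph{globally}. The individual nonnegative differences $d_j=p_5(n-a_j)-p_5(n-b_j)$ are neither monotone nor Leibniz-friendly: as $j$ grows the window length $2j+2k+1$ increases while its position $n-a_j$ (with $a_j\sim 3j^2/2$) decreases, so $d_j$ first rises and then falls, and in particular $d_0\ge d_1$ fails for large $n$; hence no term-by-term pairing suffices. My plan is to exploit the convexity of $\phi$ to telescope the alternating cubic sum into a positive main term of order $n^2$ concentrated at the top of the range (the window nearest $q^n$), and then to show that the accumulated floor/oscillation correction — a sum over $O(\sqrt n)$ values of $j$ of terms each of size $O(n)$, hence crudely $O(n^{3/2})$ but alternating and so heavily cancelling — stays below this main term. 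The truncation boundary, where $a_j\le n<b_j$ so that only $p_5(n-a_j)$ contributes, is where both the cubic approximation and the alternation are most delicate, and it will need separate care; I expect this boundary analysis to be the principal obstacle.

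Finally I expect a finite, effectively computable set of exceptional pairs $(n,k)$ — small $n$, together with $n$ in a few residue classes modulo the period $20$ of the oscillating term — to remain, which I would settle directly from the explicit formula for $p_5$. Once $c_k(n)\ge0$ is established, Theorem~\ref{R=3-stronger} follows: for $S=1$, $R=3$ the numerator of Conjecture~\ref{stronger-ja} equals $(-1)^k q^{(3k^2+k)/2}S_k(q)$ after the index shift $j\mapsto j+k$, and since $1/(q,q^2;q^3)_\infty=\bigl(1/D(q)\bigr)\cdot 1/(q^7,q^8;q^3)_\infty$ with the second factor having nonnegative coefficients, multiplying the nonnegative series $S_k(q)/D(q)$ by it preserves nonnegativity; the general case $R=3S$ then follows by replacing $q$ with $q^S$.
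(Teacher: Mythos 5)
Your setup is sound and matches the paper's starting point: extracting the coefficient as a finite alternating sum of $p_5$ values, disposing of the range $n<2k+1$, and invoking the polynomial bounds $p_5^{d}<p_5\le p_5^{u}$. But from that point on what you have is a plan, not a proof, and the plan's central claims are exactly the ones that need justification. You assert that convexity of $\phi$ will telescope the alternating cubic sum into a positive main term of order $n^2$ ``concentrated at the top of the range,'' i.e.\ at the $j=0$ window — yet you yourself observe that $d_0\ge d_1$ fails for large $n$, so the window nearest $q^n$ cannot dominate on its own and no such concentration occurs; the positivity is a genuinely global effect of the pairing of consecutive $j$'s against the quadratic spacing of the exponents, and you never exhibit the telescoped expression or verify its sign. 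Likewise the error analysis (``$O(n^{3/2})$ but alternating and so heavily cancelling'') is asserted, not performed, and you flag the truncation boundary as ``the principal obstacle'' without resolving it. Finally, your endgame — a ``finite, effectively computable set of exceptional pairs $(n,k)$'' checked from the explicit formula — is not finite as stated, because $k$ ranges over all positive integers; any direct verification must be uniform in $k$, which itself requires an argument you do not supply.

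For contrast, the paper's proof makes all of this explicit and avoids your obstacles by a different organization. It splits the sum over $j$ by parity, producing blocks of four $p_5$-terms (the sum \eqref{p5cn}); the partial sum $C(n,n_0)$ of complete blocks is then bounded below by an explicit polynomial $C^{d}(n,n_0)$ in $n$, $n_0$ \emph{and} $k$ (applying $p_5^{d}$ to positive terms and $p_5^{u}$ to negative ones), so the floor/oscillation corrections are absorbed once and for all rather than estimated by cancellation. It then decomposes $\mathbb{N}$ into the intervals $6n_0^2+6kn_0+n_0\le n\le 6(n_0+1)^2+6k(n_0+1)+n_0$ and treats four cases according to how many terms of the boundary block contribute, proving positivity in each case by monotonicity (sign of the derivative, or location of the vertex of a quadratic) of the resulting explicit polynomials, with $k$ kept symbolic throughout — which is precisely what makes the result hold for all $k\ge 1$ with no exceptional set. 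Your proposal would need this level of execution (or an equivalent one) at the telescoping step, the boundary step, and the uniformity-in-$k$ step before it could be called a proof.
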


\begin{proof} By applying the generating function \eqref{gf-p5} for $p_5(n)$, we can write the  coefficients of $q^n$ in \eqref{p5series} as follows
\[	C(n)=\sum_{j=0}^\infty (-1)^j \Big( p_5\big(n-\frac{3 j^2+6 kj+j}{2}\big)-p_5\big(n-\frac{3 j^2+6 kj+5j+4k +2}{2} \big)\Big),
\]
in which there are finite number of  nonzero terms   since $p_5(n)=0$ if $n<0$.
Then consider the parities of $j$, it turns to be
\begin{align}		C(n)&=\sum_{j=0}^\infty \Big(   p_5(n-6 j^2-6k j-j)-p_5(n-6 j^2-6k j-5 j-2k-1)\label{p5cn}\\
&  -p_5(n-6 j^2-6k j-7 j-3k-2)+p_5(n-6 j^2-6k j-11 j-5k-5)\Big).\nonumber
\end{align}
We consider the partial sum of  $C(n)$ and denote it by
\[ \begin{aligned}
C(n,n_{0})&=  \sum_{j=0}^{n_0-1}\Big(   p_5(n-6 j^2-6k j-j)-p_5(n-6 j^2-6k j-5 j-2k-1)\\
& \  -p_5(n-6 j^2-6k j-7 j-3k-2)+p_5(n-6 j^2-6k j-11 j-5k-5)\Big).
\end{aligned} \]
Applying the boundary conditions  \eqref{boundsp5} for $p_5(n)$, it is obvious to see that
\begin{align}\notag
C(n,n_{0})&> \sum_{j=0}^{n_0-1}\Big(   p^{d}_5(n-6 j^2-6k j-j)-p^{u}_5(n-6 j^2-6k j-5 j-2k-1)\notag\\
& \quad  -p^{u}_5(n-6 j^2-6k j-7 j-3k-2)+p^{d}_5(n-6 j^2-6k j-11 j-5k-5)\Big) \notag\\
&=\sum_{j=0}^{n_0-1}\Big(-\frac{9 j^4}{2}-9 j^3(1+k)-\frac{1}{4 0} j(1+k)\big(-125+162k+36k^2-36 n\big)\notag\\[8pt]
&\quad -\frac{1}{40} j^2(55+522k+216k^2-36 n)+\frac{1}{80}\big(-30k^3+k^2(-31+12 n)\notag\\[8pt]
&\quad +2k(59+17 n)-2(7 n+n^2)-133\big)\Big)\notag\\[8pt]
&=-\frac{n^2 n_0}{40} +n \Big(\frac{3k^2 n_0}{20}+\frac{9k n_0^2}{20}-\frac{k n_0}{40}+\frac{3 n_0^3}{10}-\frac{19 n_0}{40}\Big)\notag\\
& \quad 	+\left(-\frac{9k^2}{5}+\frac{3k}{20}+\frac{61}{24}\right)n_0^3+\left(-\frac{9k^3}{20}+\frac{9k^2}{40}+\frac{61k}{16}\right) n_0^2\notag\\
&\quad +\left(\frac{3k^3}{40}+\frac{19k^2}{16}-\frac{19k}{80}-\frac{793}{240}\right) n_0-\frac{9k }{4}n_0^4-\frac{9}{10} n_0^5,\notag
\end{align}
where we denote this lower bound for $C(n,n_0)$ by $C^{d}(n, n_0)$.

By noting that the set of nonnegative integers can be decomposed into the union of the following intervals
\[
\mathbb{N}=\bigcup_{n_0\geq 0} \big[6 {n_0}^2+6k n_0+n_0, 6 ({n_0}+1)^2+6k (n_0+1)+n_0\big],
\]
thus to show that $C(n)\geq 0$ for $n\geq 0$, it is sufficient to verify that for any given $n_0\geq 0$, it holds for $n$ belonging to the interval
\[ 6 n_0^2+6k n_0+n_0\leq n \leq 6 (n_0+1)^2+6k (n_0+1)+n_0. \]
Next,  we  will further decompose it into four cases.

{\noindent \bf  Case 1:} $ 6 n_0^2+6k n_0+n_0\leq n \leq 6 n_0^2+6k n_0+5n_0 +2k$.

In this case, from \eqref{p5cn}, it is easy to see that
\[
C(n)=C(n,n_0)+p_5(n-6 n_{0}^2-6k n_{0}-n_{0}).
\]
Thus
\[ \begin{aligned}
C(n)&>	C^{d}(n, n_0)+p^{d}_5(n-6 n_{0}^2-6k n_{0}-n_{0})\\
&=\frac{n^3}{240}-n^2 \Big(\frac{3k n_0}{40}+\frac{3 n_0^2}{40}+\frac{3 n_0}{80}-\frac{3}{40}\Big)+n \Big(\frac{9}{20} k^2 n_0^2+\frac{3k^2 n_0}{20}+\frac{9k n_0^3}{10}+\frac{3k n_0^2}{5}  \\
&\quad -\frac{37k n_0}{40}+\frac{9 n_0^4}{20}+\frac{9 n_0^3}{20}-\frac{71 n_0^2}{80}-\frac{5 n_0}{8}+\frac{89}{240}\Big)-\bigg(\frac{27k^2}{10}+\frac{63k}{20}-\frac{21}{8}\bigg) n_0^4\\
&\quad +\left(-\frac{9k^3}{10}-\frac{9k^2}{4}+\frac{219k}{40}+\frac{55}{16}\right) n_0^3+\left(-\frac{9k^3}{20}+\frac{117k^2}{40}+\frac{377k}{80}-\frac{43}{20}\right) n_0^2\\	&\quad +\left(\frac{3k^3}{40}+\frac{19k^2}{16}-\frac{197k}{80}-\frac{147}{40}\right) n_0+\left(-\frac{27k}{10}-\frac{27}{20}\right) n_0^5-\frac{1}{10} 9 n_0^6+\frac{1}{16}.
\end{aligned}\]
Denote 	\[ C_{1}(n)=C^{d}(n, n_0)+p^{d}_5(n-6 n_{0}^2-6k n_{0}-n_{0}). \]	

When  $ n_{0}=0 $, it is obvious that
\[
C(n)> C_{1}(n)=\frac{n^3}{240}+\frac{3n^2}{40}+\frac{89n}{240}+\frac{1}{16}> 0,
\]
for all $ n\geq 0. $

When $ k=1 $ and $ n_{0}=1 $, we see that $ 13\leq n\leq 19 $. Then
\[
C(n)>C_{1}(n)= \frac{n^3}{240}-\frac{9 n^2}{80}+\frac{14 n}{15}-\frac{35}{16}>0.
\]

When $ k\geq 1 $ and $ n_{0}\geq 2 $ or $ k\geq2 $ and $ n_{0}\geq 1 $, the extremum of the first derivation of  $C_{1}(n) $ is greater than 0, i.e.,
\[ \begin{aligned}
C_{1}^{\prime}(n)&\geq C_{1}^{\prime}(6 n_{0}^2+6k n_{0}+3n_{0}-6)\\	&=\left(\frac{3k}{20}-\frac{1}{10}\right)n_0^2+\left(\frac{3k^2}{20}-\frac{k}{40}-\frac{7}{40}\right) n_0 -\frac{19}{240}>0.
\end{aligned} \]
Thus  $ C_{1}(n) $ is  monotonically increasing,  and thereby
\[ \begin{aligned}
C(n)&> C_{1}(6 n_{0}^2+6k n_{0}+n_{0} )\\
&=\frac{9k n_0^4}{20}+		\left(\frac{9k^2}{10}+\frac{3k}{20}-\frac{1}{3}\right) n_0^3+\left(\frac{9k^3}{20}+\frac{9k^2}{40}+\frac{15k}{16}-\frac{19}{40}\right) n_0^2\\	&\quad+\left(\frac{3k^3}{40}+\frac{19k^2}{16}-\frac{19k}{80}-\frac{793}{240}\right) n_0+ \frac{1}{16}> 0.
\end{aligned}
\]

{\noindent\bf  Case 2:}  $ 6 n_0^2+6k n_0+5n_0+2k+1 \leq n \leq 6 n_0^2+6k n_0+7n_0+3k+1 $.

In this case, from \eqref{p5cn}, it can be seen that
\[ C(n)=C(n,n_{0})+p_5(n-6 n_{0}^2-6k n_{0}-n_{0})-p_5(n-6 n_{0}^2-6k n_{0}-5n_{0}-2k-1). \]
Then,
\[ \begin{aligned}
C(n)&> C^{d}(n,n_{0})+p^{d}_5(n-6 n_{0}^2-6k n_{0}-n_{0})-p^{u}_5(n-6 n_{0}^2-6k n_{0}-5n_{0}-2k-1)\\
&=n^2 \left(\frac{k}{40}+\frac{n_0}{40}+\frac{1}{80}\right)+n \left(-\frac{3k^2 n_0}{20}-\frac{k^2}{20}-\frac{9k n_0^2}{20}-\frac{17k n_0}{40}+\frac{k}{4}-\frac{3 n_0^3}{10} \right.\\
&\quad \left.-\frac{9 n_0^2}{20}+\frac{3}{40}\right)+\frac{k^3}{30}+\left(\frac{9k^2}{5}+\frac{87k}{20}+\frac{5}{24}\right) n_0^3-\frac{k^2}{4}+\frac{9 n_0^5}{10}+\left(\frac{9k}{4}+\frac{9}{4}\right) n_0^4\\
&\quad +\left(\frac{3k^3}{8}-\frac{k^2}{16}-\frac{31k}{16}-\frac{527}{240}\right) n_0+\left(\frac{9k^3}{20}+\frac{99k^2}{40}+\frac{7k}{80}-\frac{31}{16}\right) n_0^2+\frac{71k}{120}-\frac{7}{10}.			
\end{aligned} \]
Denote
\[ C_{2}(n)=C^{d}(n,n_{0})+p^{d}_5(n-6 n_{0}^2-6k n_{0}-n_{0})-p^{u}_5(n-6 n_{0}^2-6k n_{0}-5n_{0}-2k-1). \]
When $ n_{0}\geq0 $, we can compare the axis of symmetry of the quadratic function $C_{2}(n)  $ with the lower bound of $ n $ in this case. It can be seen that
\[ \begin{aligned}
&-[n]C_{2}(n)-2[n^{2}]C_{2}(n)\times(6 n_0^2+6k n_0+5n_0+2k+1)\\
&=-\frac{3 k^2 n_0}{20}-\frac{k^2}{20}-\frac{9 k n_0^2}{20}-\frac{7 k n_0}{40}-\frac{7 k}{20}-\frac{1}{10} 3 n_0^3-\frac{n_0^2}{5}-\frac{9 n_0}{40}-\frac{1}{10}< 0,
\end{aligned}\]
where $ [x^{k}]f(x) $ denotes the coefficient of $ x^{k} $ in polynomial $ f(x)$. Combining it with
\[ [n^{2}]C_{2}(n)=\left(\frac{k}{40}+\frac{n_0}{40}+\frac{1}{80}\right)>0 , \]
we obtain
\[\frac{-[n]C_{2}(n)}{2[n^{2}]C_{2}(n)}-(6 n_0^2+6k n_0+5n_0+2k+1)<0. \]
Thus $ C_{2}(n) $ is monotonically increasing in Case 2 by the properties of quadratic functions, and
\[
\begin{aligned}
&C(n)> C_{2}(6 n_0^2+6k n_0+5n_0+2k+1)\\
&=\frac{k^3}{30}+\left(\frac{9 k^2}{10}+\frac{3 k}{4}-\frac{2}{3}\right) n_0^3+\frac{7 k^2}{20}+\left(\frac{9 k^3}{20}+\frac{9 k^2}{8}+\frac{71 k}{80}-\frac{49}{40}\right) n_0^2\\
&\quad +\left(\frac{3 k^3}{8}+\frac{111 k^2}{80}+\frac{7 k}{80}-\frac{401}{240}\right) n_0+\frac{9 k n_0^4}{20}+\frac{16 k}{15}-\frac{49}{80}>0.
\end{aligned} \]

{\noindent\bf  Case 3:} $  6 n_0^2+6k n_0+7n_0+3k+2 \leq n \leq 6 n_0^2+6k n_0+11n_0+5k+4$.

In this case, from \eqref{p5cn}, we can see that
\[\begin{aligned}
C(n)&=C(n,n_{0})+p_5(n-6 n_{0}^2-6k n_{0}-n_{0})-p_5(n-6 n_{0}^2-6k n_{0}-5n_{0}-2k-1)\\
&\quad-p^{u}_5(n-6 n_{0}^2-6k n_{0}-7n_{0}-3k-2).
\end{aligned}\]
Then,
\[ \begin{aligned}
C(n)&>C^{d}(n,n_{0})+p^{d}_5(n-6 n_{0}^2-6k n_{0}-n_{0})-p^{u}_5(n-6 n_{0}^2-6k n_{0}-5n_{0}-2k-1)\\
&\quad-p^{u}_5(n-6 n_{0}^2-6k n_{0}-7n_{0}-3k-2)\\
&=-\frac{n^3}{240}+n^2 \left(\frac{3 k n_0}{40}+\frac{k}{16}+\frac{3 n_0^2}{40}+\frac{9 n_0}{80}-\frac{3}{80}\right)-n \left(\frac{9}{20} k^2 n_0^2+\frac{3 k^2 n_0}{5}+\frac{13 k^2}{80}\right.\\
&\quad \left.+\frac{9 k n_0^3}{10}+\frac{39 k n_0^2}{20}+\frac{7 k n_0}{20}-\frac{11 k}{20}+\frac{1}{20} 9 n_0^4+\frac{27 n_0^3}{20}+\frac{37 n_0^2}{80}-\frac{7 n_0}{10}+\frac{13}{120}\right)\\
&\quad -\frac{7 k^2}{10}+\frac{9}{10} k^3 n_0^3+\frac{9}{5} k^3 n_0^2+\frac{21 k^3 n_0}{20}+\frac{7 k^3}{48}+\frac{27}{10} k^2 n_0^4+\frac{153}{20} k^2 n_0^3+\frac{9}{2} k^2 n_0^2\\
&\quad -\frac{43 k^2 n_0}{40}+\frac{27 k n_0^5}{10}+\frac{99 k n_0^4}{10}+\frac{303 k n_0^3}{40}-\frac{163 k n_0^2}{40}-\frac{47 k n_0}{16}+\frac{137 k}{120}+\frac{9 n_0^6}{10}\\
&\quad +\frac{81 n_0^5}{20}+\frac{33 n_0^4}{8}-\frac{41 n_0^3}{16}-\frac{263 n_0^2}{80}-\frac{73 n_0}{80}-\frac{49}{40}.
\end{aligned} \]
Denote
\[\begin{aligned}
C_{3}(n)&=C^{d}(n,n_{0})+p^{d}_5(n-6 n_{0}^2-6k n_{0}-n_{0})-p^{u}_5(n-6 n_{0}^2-6k n_{0}-5n_{0}-2k-1)\\
&\quad-p^{u}_5(n-6 n_{0}^2-6k n_{0}-7n_{0}-3k-2).
\end{aligned} \]

When $ n_{0}=0 $ and $ k=1 $,  then we have $5 \leq n \leq 9 $ and
\[ C_{3}(n)= -\frac{n^3}{240}+\frac{n^2}{40}+\frac{67 n}{240}-\frac{51}{80}>0.\]

When  $ n_{0}\geq 1 $ or $ k\geq 2 $, by considering the first derivation $ C_{3}^{\prime}(n) $ of  $C_{3}(n) $, we find that $ C_{3}^{\prime}(n)  $ is greater than 0 since
\[ \begin{aligned}
&C_{3}^{\prime}(  6 n_0^2+6k n_0+7n_0+3k+2)\\
&\qquad=\left(\frac{3k^2}{20}+\frac{9k}{40}+\frac{11}{40}\right) n_0+\frac{k^2}{10}+\left(\frac{3k}{20}+\frac{1}{20}\right) n_0^2+\frac{17k}{40}-\frac{37}{120}>0,\\
&C_{3}^{\prime}(6 n_0^2+6k n_0+11n_0+5k+4)\\
&\qquad=\left(\frac{3 k^2}{20}+\frac{13 k}{40}-\frac{11}{80}\right) n_0+\frac{3 k^2}{20}+\left(\frac{3 k}{20}+\frac{1}{20}\right) n_0^2+\frac{7 k}{40}-\frac{73}{120}>0,
\end{aligned} \]
It deduces $ C_{3}(n) $ is monotonically increasing in Case 3, and
\[ \begin{aligned}
&C(n)> C_{3}(6 n_0^2+6k n_0+7n_0+2)\\
&=\frac{13k^3}{120}+\left(\frac{9k^2}{10}+\frac{21k}{20}-\frac{2}{3}\right) n_0^3+\frac{13k^2}{16}+\left(\frac{9k^3}{20}+\frac{63k^2}{40}+\frac{107k}{80}-\frac{31}{40}\right) n_0^2\\
&\quad +\left(\frac{21k^3}{40}+\frac{147k^2}{80}+\frac{99k}{80}-\frac{293}{240}\right) n_0+\frac{9k n_0^4}{20}+\frac{47k}{30}-\frac{13}{8}>0.
\end{aligned} \]

{\noindent\bf  Case 4:} $ 6 n_0^2+6k n_0+11n_0+5k+5\leq n  \leq 6 (n_0+1)^2+6k (n_0+1)+n_0$.

In this case, $C(n)=C(n,n_0+1)$. We can directly compute $ C^{d}(n,n_{0}+1) $ for $ n_{0}\geq 0 $ and $ k\geq 1 $. By noting that
\[  \begin{aligned}
&	C^{d}( 6 n_0^2+6kn_0 +11n_0+5k+5,n_{0}+1)\\
&=\left(\frac{9k^2}{10}-\frac{3k}{20}-\frac{1}{3}\right) (n_0+1)^3+\left(\frac{9k^3}{20}-\frac{9k^2}{40}+\frac{15k}{16}+\frac{19}{40}\right) (n_0+1)^2\\
&\quad +\left(-\frac{3k^3}{40}+\frac{19k^2}{16}+\frac{19k}{80}-\frac{793}{240}\right) (n_0+1)+\frac{9k (n_0+1)^4}{20}>0,\\
&C^{d}( 6 (n_0+1)^2+6k (n_0+1)+n_0,n_{0}+1)\\
&=\left(\frac{9k^2}{10}+\frac{3k}{20}-\frac{1}{3}\right) (n_0+1)^3+\left(\frac{9k^3}{20}+\frac{9k^2}{40}+\frac{15k}{16}-\frac{19}{40}\right) (n_0+1)^2\\
&\quad +\left(\frac{3k^3}{40}+\frac{19k^2}{16}-\frac{19k}{80}-\frac{793}{240}\right) (n_0+1)+\frac{9k (n_0+1)^4}{20}>0,
\end{aligned}  \]
we deduce that
\[ \begin{aligned}
C(n)>&\text{min}\{C^{d}( 6 n_0^2+6kn_0 +11n_0+5k+5,n_{0}+1),\\
&\quad \quad C^{d}( 6 (n_0+1)^2+6k (n_0+1)+n_0,n_{0}+1)  \} >0.
\end{aligned} \]
Above all, we complete the  proof.
\end{proof}

Now, we are ready to show the more general case compared to \eqref{BMConj}.

\begin{thm}\label{conj-regular}
For $n\geq 0$, $ \ell=3 $ or $\ell
\geq 6 $, the coefficient of $ q^{n} $ in	
\begin{align}\label{general-regular}
(-1)^{k}\Big(\left(q^{\ell} ; q^{\ell}\right)_{\infty}-\frac{\left(q^{\ell} ; q^{\ell}\right)_{\infty}}{(q ; q)_{\infty}} \sum_{j=-k+1}^k(-1)^j q^{j(3 j-1) / 2}\Big)
\end{align}  is nonnegative.
\end{thm}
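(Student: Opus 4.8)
The plan is to reduce the claim to Theorem~\ref{coe-p5} by a single application of Euler's pentagonal number theorem followed by a clean factorization of the infinite product. First I would factor $(q^\ell;q^\ell)_\infty/(q;q)_\infty$ out of \eqref{general-regular}, reducing the bracketed quantity to $(q;q)_\infty-\sum_{j=-k+1}^{k}(-1)^jq^{j(3j-1)/2}$. Using \eqref{Euler} in the form $(q;q)_\infty=\sum_{j=-\infty}^{\infty}(-1)^jq^{j(3j-1)/2}$, this difference becomes the two-sided tail $\bigl(\sum_{j\le-k}+\sum_{j\ge k+1}\bigr)(-1)^jq^{j(3j-1)/2}$. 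Reindexing the left tail by $j\mapsto-j$ and the right tail by $j\mapsto j+1$ merges them into $\sum_{i\ge k}(-1)^iq^{(3i^2+i)/2}\bigl(1-q^{2i+1}\bigr)$, and the substitution $i=k+j$ then produces
\[
(-1)^{k}q^{(3k^2+k)/2}\sum_{j=0}^{\infty}(-1)^jq^{(3j^2+6jk+j)/2}\bigl(1-q^{2j+2k+1}\bigr).
\]
The sum here is exactly the series $S$ appearing in Theorem~\ref{coe-p5}. Since the prefactor $(-1)^k$ in \eqref{general-regular} cancels the $(-1)^k$ just produced, \eqref{general-regular} equals $q^{(3k^2+k)/2}\,\dfrac{(q^\ell;q^\ell)_\infty}{(q;q)_\infty}\,S$, and as $q^{(3k^2+k)/2}$ is a monomial it suffices to show that $\dfrac{(q^\ell;q^\ell)_\infty}{(q;q)_\infty}\,S$ has nonnegative coefficients.

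Next I would split off the $p_5$-generating function. Writing $P_5(q)=1/\bigl((1-q)(1-q^2)(1-q^4)(1-q^5)\bigr)$, Theorem~\ref{coe-p5} states precisely that $P_5(q)\,S$ has nonnegative coefficients. The idea is therefore to write
\[
\frac{(q^\ell;q^\ell)_\infty}{(q;q)_\infty}\,S=R_\ell(q)\cdot\bigl(P_5(q)\,S\bigr),\qquad R_\ell(q)=(1-q)(1-q^2)(1-q^4)(1-q^5)\,\frac{(q^\ell;q^\ell)_\infty}{(q;q)_\infty},
\]
and to verify that $R_\ell(q)$ itself has nonnegative coefficients. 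Since $(q^\ell;q^\ell)_\infty/(q;q)_\infty=1/\prod_{\ell\nmid m}(1-q^m)$, the four numerator factors $(1-q^m)$ with $m\in\{1,2,4,5\}$ cancel against denominator factors exactly when none of $1,2,4,5$ is a multiple of $\ell$, that is, precisely when $\ell=3$ or $\ell\ge 6$. In that regime
\[
R_\ell(q)=\prod_{\substack{m\ge 1,\ \ell\nmid m\\ m\notin\{1,2,4,5\}}}\frac{1}{1-q^m},
\]
a product of geometric series, so all its coefficients are nonnegative. This is exactly where the hypothesis enters: for $\ell\in\{1,2,4,5\}$ one of the numerator factors survives, and $R_\ell$ need not be coefficientwise nonnegative.

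Finally, $\dfrac{(q^\ell;q^\ell)_\infty}{(q;q)_\infty}\,S$ is the product of the two power series $R_\ell(q)$ and $P_5(q)\,S$, both coefficientwise nonnegative, hence it is coefficientwise nonnegative; multiplication by the monomial $q^{(3k^2+k)/2}$ preserves this, which completes the argument. The genuine content sits entirely in Theorem~\ref{coe-p5}, while the steps above are essentially bookkeeping. Accordingly I expect the main obstacle to be administrative rather than conceptual: pinning down the signs and the two reindexings in the first paragraph so that the residual tail matches $S$ term for term, and confirming that the cancellation producing $R_\ell$ is clean, with no leftover numerator factor, simultaneously for $\ell=3$ and for all $\ell\ge 6$.
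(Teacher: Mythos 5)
Your proof is correct and follows essentially the same route as the paper: both reduce \eqref{general-regular} via Euler's pentagonal number theorem and reindexing to $q^{(3k^2+k)/2}\,\frac{(q^\ell;q^\ell)_\infty}{(q;q)_\infty}\,S$, then split off $P_5(q)\,S$ (Theorem~\ref{coe-p5}) and check that the cofactor --- your $R_\ell(q)$, which coincides exactly with the paper's $\frac{(q^{\ell}; q^{\ell})_{\infty}}{(1-q^{3})(q^{6}; q)_{\infty}}$ --- has nonnegative coefficients precisely when $\ell=3$ or $\ell\geq 6$. Your explicit rewriting of $R_\ell(q)$ as a product of geometric series just spells out the cancellation the paper calls obvious.
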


{\noindent \bf  Remark.} From Yao's result \cite[Theorem  1.3]{Yao6regular} and following the similar procedures as given below, we can see that the above result still holds for  $\ell\geq 4$. Therefore, we can conclude that when $ \ell \geq 3 $,  \eqref{general-regular} has nonnegative coefficients.

\begin{proof} We can see that
\[\begin{aligned}
&\quad \ (-1)^{k}\Big(\left(q^{\ell} ; q^{\ell}\right)_{\infty}-\frac{\left(q^{\ell} ; q^{\ell}\right)_{\infty}}{(q ; q)_{\infty}} \sum_{j=-k+1}^k(-1)^j q^{j(3 j-1) / 2}\Big) \\
& = (-1)^{k}\frac{\left(q^{\ell} ; q^{\ell}\right)_{\infty}}{(q;q)_{\infty}} \Big(\sum_{j=-\infty}^{-k}+\sum_{j=k+1}^{\infty}\Big)
(-1)^j q^{j(3j-1) / 2} \\
& =q^{\frac{3 {k}^2+{k}}{2}}  \frac{\left(q^{\ell} ; q^{\ell}\right)_{\infty}}{(q;q)_{\infty}} \sum_{j=0}^{\infty}(-1)^j q^{\left(3 j^2+6 j {k}+j\right) / 2}\left(1-q^{2 j+2 {k}+1}\right) \\
& = q^{\frac{3 {k}^2+{k}}{2}}   \frac{(q^{\ell}; q^{\ell})_{\infty}}{(1-q^{3}) \left(q^{6}; q\right)_{\infty}}  \frac{\sum_{j=0}^{\infty}(-1)^j q^{\left(3 j^2+6 j {k}+j\right) / 2}\left(1-q^{2 j+2 {k}+1}\right) }{(1-q)(1-q^2)(1-q^4)(1-q^{5})}
\\[8pt]
& = q^{\frac{3 {k}^2+{k}}{2}}   \frac{(q^{\ell}; q^{\ell})_{\infty}}{(1-q^{3}) \left(q^{6}; q\right)_{\infty}}  \frac{\sum_{j=0}^{\infty}(-1)^j q^{\left(3 j^2+6 j {k}+j\right) / 2}\left(1-q^{2 j+2 {k}+1}\right) }{(1-q)(1-q^2)(1-q^4)(1-q^{5})}.
\end{aligned}\]
Obviously, when $ \ell=3 $ or $\ell
\geq 6 $, the coefficients in
\[   \frac{(q^{\ell}; q^{\ell})_{\infty}}{(1-q^{3}) \left(q^{6}; q\right)_{\infty}} \]
are nonnegative. Combine it with Theorem \ref{coe-p5}  the proof is complete.
\end{proof}
Clearly, when $\ell=6$, Ballantine and Merca's Conjecture \ref{B-M-conj-1} for  $ 6 $-regular partitions is confirmed.

Finally, we show that when $R=3S$, the stronger conjecture, i.e. Conjecture \ref{stronger-ja}, for Jacobi triple product series  holds.

{\noindent \it Proof of Theorem \ref{R=3-stronger}.}
Taking   $ \ell=3 $ in the proof of Theorem \ref{conj-regular}, we have that
\begin{align}
&\quad \ q^{\frac{3 {k}^2+{k}}{2}}  \frac{\left(q^{3} ; q^{3}\right)_{\infty}}{(q;q)_{\infty}} \sum_{j=0}^{\infty}(-1)^j q^{\left(3 j^2+6 j {k}+j\right) / 2}\left(1-q^{2 j+2 {k}+1}\right)\notag \\
&=   q^{\frac{3 {k}^2+{k}}{2}}   \frac{ 1  }{\left(q^{7},q^{8} ; q^{3}\right)_{\infty}}  \frac{\sum_{j=0}^{\infty}(-1)^j q^{\left(3 j^2+6 j {k}+j\right) / 2}\left(1-q^{2 j+2 {k}+1}\right) }{(1-q)(1-q^2)(1-q^4)(1-q^{5})} \notag \\
&=q^{\frac{3 {k}^2+{k}}{2}}  \frac{1}{(q,q^{2};q^{3})_{\infty}} \sum_{j=0}^{\infty}(-1)^j q^{\left(3 j^2+6 j {k}+j\right) / 2}\left(1-q^{2 j+2 {k}+1}\right) \notag \\
&= q^{\frac{3 {k}^2+{k}}{2}}  \frac{1}{(q,q^{2};q^{3})_{\infty}} \bigg(\sum_{j=0}^{\infty}(-1)^j q^{(3 j^2+6 j {k}+j) / 2}- \sum_{j=0}^{\infty}(-1)^j q^{(3 j^2+6 j {k}+5j+4k+2) / 2}\bigg)\notag \\[8pt]
&= (-1)^{k} \frac{\sum_{j=k}^{\infty}(-1)^j q^{ 3j(j+1) / 2}\left(q^{-j}-q^{j+1}\right)}{\left(q, q^{2}; q^{3}\right)_{\infty}}.\notag
\end{align}
From Theorem \ref{conj-regular}, it implies that the above series has nonnegative coefficients. Thereby, the proof can be completed by  directly replacing $ q $ with $ q^{S} $ in the above expression. \qed

\section{Concluding remarks}\label{conclusion}

Firstly, we remark that if we only consider the truncated forms for  the three classical  theta series,   it is sufficient to use the following two lemmas due to Agarwal, Andrews and Bressoud \cite{agarwal1987bailey} for the Bailey chain and Bailey lattice.

\begin{lem}
If $ (\alpha_{n},\beta_n) $ is a Bailey pair relative to $ a $, then so is $ (\alpha_{n}^{\prime},\beta_{n}^{\prime}) $, where
$$
\begin{aligned}
\alpha_n^{\prime}&=\frac{(\rho)_n(\sigma)_n(a q / \rho \sigma)^n \alpha_n}{(a q / \rho)_n(a b / \sigma)_n},\\[8pt]
\beta_n^{\prime}&=\sum_{k=0}^n \frac{(a q / \rho \sigma)_{n-k}(\rho)_k(\sigma)_k(a q / \rho \sigma)^k \beta_k}{(q)_{n-k}(a q / \rho)_n(a q / \sigma)_n},
\end{aligned}
$$
and
\begin{align}\label{lem-1}
\sum_{k=0}^n \frac{(a q / \rho \sigma)_{n-k}(\rho)_k(\sigma)_k(a q / \rho \sigma)^k \beta_k}{(q)_{n-k}(a q / \rho)_n(a q / \sigma)_n}=\sum_{r=0}^{n} \frac{\alpha_{r}^{\prime}}{(q)_{n-r}(a q)_{n+r}}.
\end{align}
\end{lem}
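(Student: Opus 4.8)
Since the displayed identity \eqref{lem-1} is precisely the defining relation \eqref{Baileyp} for the pair $(\alpha_n',\beta_n')$ (its left-hand side being $\beta_n'$ by definition, and the denominator of $\alpha_n'$ being read as $(aq/\rho)_n(aq/\sigma)_n$), proving \eqref{lem-1} is exactly what shows $(\alpha_n',\beta_n')$ is again a Bailey pair relative to $a$. The plan is to substitute the hypothesis $\beta_k=\sum_{j=0}^{k}\alpha_j/\bigl((q)_{k-j}(aq)_{k+j}\bigr)$ into the left-hand side of \eqref{lem-1}, interchange the order of the resulting double sum so that $j$ runs from $0$ to $n$ and $k$ from $j$ to $n$, and pull out the $k$-independent factor $1/\bigl((aq/\rho)_n(aq/\sigma)_n\bigr)$. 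Matching the coefficient of each $\alpha_j$ against the right-hand side $\sum_{r}\alpha_r'/\bigl((q)_{n-r}(aq)_{n+r}\bigr)$ reduces the entire lemma to the single inner identity
\[
\sum_{k=j}^{n}\frac{(aq/\rho\sigma)_{n-k}(\rho)_k(\sigma)_k(aq/\rho\sigma)^k}{(q)_{n-k}(q)_{k-j}(aq)_{k+j}}=\frac{(aq/\rho)_n(aq/\sigma)_n\,(\rho)_j(\sigma)_j(aq/\rho\sigma)^j}{(aq/\rho)_j(aq/\sigma)_j\,(q)_{n-j}(aq)_{n+j}},
\]
to be established for each fixed $j$ with $0\le j\le n$.

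To evaluate the inner sum I would set $k=j+m$ with $0\le m\le n-j$, write $N:=n-j$, and factor out of each $q$-shifted factorial its value at $m=0$: the ascending ones split as $(\rho)_{j+m}=(\rho)_j(\rho q^{j})_m$, $(\sigma)_{j+m}=(\sigma)_j(\sigma q^{j})_m$ and $(aq)_{2j+m}=(aq)_{2j}(aq^{2j+1})_m$, while the two descending ones are handled together by the reversal identity $\frac{(aq/\rho\sigma)_{N-m}}{(q)_{N-m}}=\frac{(aq/\rho\sigma)_N}{(q)_N}\cdot\frac{(q^{-N})_m}{(q^{-N}\rho\sigma/a)_m}(\rho\sigma/a)^m$. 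Here the crucial point is that reversing $1/(q)_{N-m}$ produces the terminating upper parameter $q^{-N}$, while reversing $(aq/\rho\sigma)_{N-m}$ contributes the lower parameter $q^{-N}\rho\sigma/a$, and the accompanying power $(\rho\sigma/a)^m$ combines with $(aq/\rho\sigma)^m$ to leave the argument exactly $q$. The sum over $m$ thus becomes the terminating series ${}_3\phi_2(q^{-N},\rho q^{j},\sigma q^{j};\,aq^{2j+1},\,q^{-N}\rho\sigma/a;\,q,q)$, which is Saalsch\"utzian since the product of its lower parameters $aq^{2j+1}\cdot q^{-N}\rho\sigma/a=\rho\sigma q^{2j+1-N}$ equals $q$ times the product of its upper parameters $q^{-N}\cdot\rho q^{j}\cdot\sigma q^{j}=\rho\sigma q^{2j-N}$. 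The $q$-Pfaff--Saalsch\"utz summation (see \cite{Basic}) then evaluates it as $\frac{(aq^{j+1}/\rho)_N(aq^{j+1}/\sigma)_N}{(aq^{2j+1})_N(aq/\rho\sigma)_N}$.

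The remaining work is routine telescoping of $q$-shifted factorials. Substituting this closed form back, the factor $(aq/\rho\sigma)_N$ cancels, and the collapses $(aq)_{2j}(aq^{2j+1})_N=(aq)_{n+j}$, $(aq/\rho)_j(aq^{j+1}/\rho)_N=(aq/\rho)_n$ and $(aq/\sigma)_j(aq^{j+1}/\sigma)_N=(aq/\sigma)_n$ reduce the left side to the right side of the inner identity exactly. Reassembling the coefficients of the $\alpha_j$ then gives \eqref{lem-1}, which simultaneously confirms that the stated formula for $\beta_n'$ is the correct Bailey transform of $\beta_n$ and that $(\alpha_n',\beta_n')$ is a Bailey pair relative to $a$.

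I expect the main obstacle to lie in the bookkeeping of the second step: one must track the power of $q$ and the reciprocal factorial produced by reversing the two descending $q$-shifted factorials, so that the $m$-series is presented with argument precisely $q$ and with the correct split into three upper and two lower parameters. The delicate point is that the parameter occurring throughout must be $aq/\rho\sigma$ (rather than, say, $a/\rho\sigma$ or $aq/\rho$); only for this value does the Saalsch\"utzian balance hold exactly, so that the $q$-Pfaff--Saalsch\"utz summation is applicable.
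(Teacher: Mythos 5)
Your proof is correct, and it is the classical argument: the paper itself states this lemma without proof (it is quoted from Agarwal--Andrews--Bressoud \cite{agarwal1987bailey}), and the standard proof in that literature is exactly your route --- substitute the defining relation \eqref{Baileyp} for $\beta_k$, interchange the double sum, reverse the descending factorials to produce a terminating balanced ${}_3\phi_2$ with argument $q$, and evaluate it by $q$-Pfaff--Saalsch\"utz; your reversal identity, the Saalsch\"utzian balance check, and the final telescoping $(aq)_{2j}(aq^{2j+1})_{N}=(aq)_{n+j}$, $(aq/\rho)_j(aq^{j+1}/\rho)_N=(aq/\rho)_n$ all check out. You were also right to read the denominator factor $(ab/\sigma)_n$ in the statement as the obvious typo for $(aq/\sigma)_n$, without which $\alpha_n'$ would not match the coefficient extracted from \eqref{lem-1}.
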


\begin{lem}
If $ (\alpha_{n},\beta_n) $ is a Bailey pair relative to $ a $, then $ (\alpha_{n}^{\prime},\beta_{n}^{\prime}) $ is a Bailey pair relative to $ a/q$, where
$$
\begin{aligned}
\alpha_n^{\prime}&=(1-a)\left(\frac{a}{\rho \sigma}\right)^n \frac{(\rho)_n(\sigma)_n}{(a / \rho)_n(a / \sigma)_n}\left\{\frac{\alpha_n}{1-a q^{2 n}}-\frac{a q^{2 n-2} \alpha_{n-1}}{1-a q^{2 n-2}}\right\}, \\[8pt]
\beta_n^{\prime}&=\sum_{k=0}^n \frac{(a  / \rho \sigma)_{n-k}(\rho)_k(\sigma)_k(a  / \rho \sigma)^k \beta_k}{(q)_{n-k}(a / \rho)_n(a  / \sigma)_n},
\end{aligned}
$$
and
\begin{align}\label{lem-2}
\sum_{k=0}^n \frac{(a q / \rho \sigma)_{n-k}(\rho)_k(\sigma)_k(a  / \rho \sigma)^k \beta_k}{(q)_{n-k}(a  / \rho)_n(a / \sigma)_n}=\sum_{r=0}^{n} \frac{\alpha_{r}^{\prime}}{(q)_{n-r}(a )_{n+r}}.
\end{align}
\end{lem}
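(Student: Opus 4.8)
The statement to verify is identity (\ref{lem-2}): once it is established, the pair $(\alpha_n',\beta_n')$ satisfies the defining relation (\ref{Baileyp}) with $a$ replaced by $a/q$ (since $(q\cdot a/q)_{n+r}=(a)_{n+r}$), which is exactly the assertion that it is a Bailey pair relative to $a/q$. Thus the entire content is the identity (\ref{lem-2}), whose two sides are explicit in the hypothesised pair $(\alpha_n,\beta_n)$. The plan is to expand both sides and match the coefficient of each $\alpha_j$.

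First I would substitute the hypothesis that $(\alpha_n,\beta_n)$ is a Bailey pair relative to $a$, namely $\beta_k=\sum_{j=0}^{k}\alpha_j/((q)_{k-j}(aq)_{k+j})$, into the defining sum $\beta_n'=\sum_{k=0}^n (a/\rho\sigma)_{n-k}(\rho)_k(\sigma)_k(a/\rho\sigma)^k\beta_k/((q)_{n-k}(a/\rho)_n(a/\sigma)_n)$. After pulling the $k$-free factor $1/((a/\rho)_n(a/\sigma)_n)$ out front and interchanging the order of summation, the coefficient of $\alpha_j$ on the left becomes the terminating inner sum
\[
I_j=\frac{1}{(a/\rho)_n(a/\sigma)_n}\sum_{k=j}^{n}\frac{(a/\rho\sigma)_{n-k}(\rho)_k(\sigma)_k(a/\rho\sigma)^k}{(q)_{n-k}(q)_{k-j}(aq)_{k+j}}.
\]
On the other side, substituting the two-term $\alpha_r'$ into $\sum_{r}\alpha_r'/((q)_{n-r}(a)_{n+r})$ and reindexing the $\alpha_{r-1}$-piece by $r\mapsto r+1$ shows that the coefficient of $\alpha_j$ there is an explicit two-term expression $J_j$ built from $\alpha_r'$ evaluated at $r=j$ and $r=j+1$. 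The lemma therefore reduces to the single $q$-hypergeometric identity $I_j=J_j$ for all $0\le j\le n$, after which invertibility of the triangular Bailey system gives the claim.

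The heart of the matter is evaluating $I_j$ in closed form. Writing $k=j+m$ and reversing the factors $(a/\rho\sigma)_{n-j-m}$ and $(q)_{n-j-m}$ turns the inner sum into a terminating $_3\phi_2$ whose defining base is $a$ even though its chain weights are built on $a/q$; this one-power-of-$q$ mismatch is exactly what keeps it from being outright Saalschützian. The standard Bailey-lattice device is to break the summand into two balanced pieces via a contiguous relation, each of which is summed by the $q$-Pfaff--Saalschütz theorem \cite{Basic}; the two resulting products assemble into precisely the $\alpha_j$ and $\alpha_{j-1}$ contributions and reproduce the well-poised factors $(1-a)$, $(1-aq^{2j})$ and $(1-aq^{2j-2})$ appearing in $\alpha_n'$. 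I expect this inner-sum evaluation---casting it correctly and tracking the base mismatch that forces the two-term structure of $\alpha_n'$---to be the main obstacle; the surrounding steps are routine $q$-shifted-factorial bookkeeping, and verifying $I_j=J_j$ then completes the proof. \qed
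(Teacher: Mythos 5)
Your plan is correct, but note first that the paper offers no proof of this lemma at all: it is quoted in the concluding remarks as a known result of Agarwal, Andrews and Bressoud \cite{agarwal1987bailey}, so any argument here is necessarily independent of the paper. Your reduction is sound — \eqref{lem-2} \emph{is} the Bailey-pair relation relative to $a/q$, and since the $\alpha_j$ in a Bailey pair are free parameters (with $\beta$ determined by them), matching the coefficient of each $\alpha_j$ proves the lemma; your appeal to ``invertibility of the triangular system'' is unnecessary. The one step you leave as an expectation — splitting the almost-Saalschützian inner sum $I_j$ — does go through, and the contiguous relation you need is the two-term partial-fraction identity
\begin{equation*}
\frac{1}{(q)_{k-j}(aq)_{k+j}}=\frac{1-a}{1-aq^{2j}}\left(\frac{1}{(q)_{k-j}(a)_{k+j}}-\frac{aq^{2j}}{(q)_{k-j-1}(a)_{k+j+1}}\right),
\end{equation*}
verified by clearing denominators (the numerator collapses to $1-aq^{2j}$ and $(a)_{k+j+1}=(1-a)(aq)_{k+j}$). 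Substituting it into $I_j$ produces two sums that are exactly the Saalschützian inner sums of the ordinary Bailey chain step relative to base $a/q$ (note $(a/q)q=a$), with lower indices $j$ and $j+1$; the two $q$-Pfaff--Saalschütz evaluations then reproduce precisely the $r=j$ and $r=j+1$ contributions to $J_j$, including the factors $(1-a)$, $1/(1-aq^{2j})$ and $aq^{2j}$. For comparison, the proof in the literature composes two simpler facts instead of matching coefficients directly: first the ``unit'' base change $(\alpha_n,\beta_n)\mapsto(\alpha''_n,\beta_n)$ relative to $a/q$, where $\alpha''_n=(1-a)\left\{\alpha_n/(1-aq^{2n})-aq^{2n-2}\alpha_{n-1}/(1-aq^{2n-2})\right\}$ (this is verified by the very same two-term identity above), and then the standard Bailey chain step with $\rho,\sigma$ relative to $a/q$; your direct expansion performs both steps at once, at the cost of a slightly heavier inner-sum computation. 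Finally, you implicitly corrected a typo in the statement: the factor $(aq/\rho\sigma)_{n-k}$ on the left of \eqref{lem-2} should read $(a/\rho\sigma)_{n-k}$, consistent with the definition of $\beta'_n$ and with \cite{agarwal1987bailey}.
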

For example, by inserting the Bailey pair (\ref{Bailey-pair-1}) into (\ref{lem-2}), then setting $ n\to\infty $, $ \rho\mapsto-a^{\frac{1}{2}} $, $ \sigma\to\infty  $, and replacing $ q  $ by $ q^{2} $, it also leads to  (\ref{pf-2-2}).

Secondly, in Section~4, we show that our truncated form for   Euler's pentagonal number theorem \eqref{thm-1-eq} is equivalent to Andrews and Merca's result \eqref{trun-euler}. The coefficients of $q^n$ in our  expression \eqref{thm-1-eq} also can be interpreted in terms of certain partitions of $n$ whose number is definitely equal to $M_k(n)$. The remaining problem is how to construct a bijection between these two kinds of partitions.


\section*{Acknowledgments}
The authors would like to thank Ole Warnaar for valuable comments and suggestions  on the earlier version of this manuscript, especially for the initial derivation of the conjugate Bailey pair as given in Lemma 4.2.  We also thank the anonymous referee for the specific suggestions that helped us to improve the exposition.
This work was supported by the National Natural Science Foundation of China (Grant No. 12071235) and the Fundamental Research Funds for the Central Universities.


\end{document}